\let\mathcal\mathscr
\numberwithin{equation}{section}
\newtheorem{theorem}{Theorem}[section]
\newtheorem{lemma}[theorem]{Lemma}
\newtheorem{corollary}[theorem]{Corollary}
\theoremstyle{definition}
\newtheorem*{ack}{Acknowledgements}
\newtheorem*{notation}{Notation}
\renewcommand{\d}{\mathrm{d}}
\renewcommand{\phi}{\varphi}
\newcommand{\0}{\mathbf{0}}
\newcommand{\PP}{\mathbb{P}}
\newcommand{\A}{\mathbf{A}}
\newcommand{\FF}{\mathbb{F}}
\newcommand{\ZZ}{\mathbb{Z}}
\newcommand{\NN}{\mathbb{N}}
\newcommand{\QQ}{\mathbb{Q}}
\newcommand{\RR}{\mathbb{R}}
\newcommand{\CC}{\mathbb{C}}
\newcommand{\cP}{\mathcal{P}}
\newcommand{\cN}{\mathcal{N}}
\renewcommand{\leq}{\leqslant}
\renewcommand{\le}{\leqslant}
\renewcommand{\geq}{\geqslant}
\renewcommand{\ge}{\geqslant}
\newcommand{\ma}{\mathbf}
\newcommand{\m}{\mathbf{m}}
\newcommand{\w}{\mathbf{w}}
\newcommand{\x}{\mathbf{x}}
\newcommand{\y}{\mathbf{y}}
\renewcommand{\c}{\mathbf{c}}
\renewcommand{\v}{\mathbf{v}}
\renewcommand{\u}{\mathbf{u}}
\newcommand{\z}{\mathbf{z}}
\renewcommand{\b}{\mathbf{b}}
\renewcommand{\a}{\mathbf{a}}
\renewcommand{\ss}{\mathfrak{S}}
\newcommand{\al}{\alpha}
\newcommand{\ve}{\varepsilon}
\newcommand{\ee}{\varepsilon}
\newcommand{\bth}{\boldsymbol{\theta}}
\newcommand{\bga}{\boldsymbol{\gamma}}
\DeclareMathOperator{\rank}{rank}
\DeclareMathOperator{\meas}{meas}
\DeclareMathOperator{\supp}{supp}
\DeclareMathOperator{\Mod}{mod} 
\DeclareMathOperator{\qorder}{ord_{\it{Q}}}
\renewcommand{\bmod}[1]{\,(\Mod{#1})}
\newcommand{\minor}{\mathfrak{m}}
\newcommand{\major}{\mathfrak{M}}
\renewcommand{\t}{\mathbf{t}}
\newcommand{\Xns}{X_{\mathrm{sm}}}
\renewcommand{\b}[1]{{\bf #1}}
\newcommand{\eeq}{\end{equation}}
\newcommand{\beql}[1]{\begin{equation}\label{#1}}
\begin{document}

\title[Intersections of cubic and quadric hypersurfaces]{Rational 
points on intersections of cubic and quadric hypersurfaces}

\author{T.D.\ Browning}
\author{R.\ Dietmann}
\author{D.R.\ Heath-Brown}

\address{School of Mathematics\\
University of Bristol\\ Bristol\\ BS8 1TW}
\email{t.d.browning@bristol.ac.uk}

\address{Department of Mathematics\\
  Royal Holloway\\ University of London\\
  Egham\\ TW20 OEX}
\email{rainer.dietmann@rhul.ac.uk}

\address{Mathematical Institute,
24--29 St. Giles', Oxford OX1 3LB}
\email{rhb@maths.ox.ac.uk}

\date{\today}

\thanks{2010  {\em Mathematics Subject Classification.} 11G35 (11P55,  14G05)}

\begin{abstract}
We investigate the Hasse principle for complete intersections
cut out by a quadric and cubic hypersurface defined over the rational numbers.
\end{abstract}

\maketitle
\setcounter{tocdepth}{1}
\tableofcontents

\section{Introduction}

Suppose we are given a pair of forms $C,Q\in \QQ[x_1,\ldots,x_n]$,
with $C$ cubic and $Q$ quadratic,  whose common zero locus defines a
complete intersection 
 $X\subset\PP^{n-1}$ defined over $\QQ$.
The primary goal of this paper is to establish the existence of $\QQ$-rational points on 
$X$ 
under the mildest possible  hypotheses.  

One of the few  results in the literature that specifically treats
pairs of cubic and quadratic forms appears in  work of Wooley
\cite{wooley', wooley}. This deals with the special case in which  $C$
and $Q$ are both diagonal, so that  
$$
C=a_1x_1^3+\cdots + a_nx_n^3, \quad 
Q=b_1x_1^2+\cdots + b_nx_n^2,
$$
for integers $a_i,b_i$, with the $b_i$ not all sharing the same sign.
Assuming that $n\geq 13$,  it follows from the main result in
\cite{wooley} that  
$X(\QQ)$ is non-empty provided only that $X(\RR)\neq \emptyset$, 
with 
at least seven $a_i$ non-zero.

In our work we wish to handle general forms $C,Q$ in so far as is possible. 
All of the results that we obtain pertain to complete intersections
$X\subset \PP^{n-1}$ cut out by  
a cubic  hypersurface $C=0$ and a quadric hypersurface
$Q=0$, both defined over $\QQ$.

One way to produce rational points on $X$ is 
first to find a large dimensional linear space on the quadric $Q=0$,
which is defined over $\QQ$.  
One is then led to the simpler problem of finding rational points on
the intersection of the cubic hypersurface $C=0$ with the  linear
space.  
Let us call a $d$-dimensional 
linear space $\Lambda\subset \PP^{n-1}$ a {\em $d$-plane}.  
Let  $Q\in \QQ[x_1,\ldots, x_n]$ be a quadratic form.
For each prime $p$
the quadric $Q=0$ contains a
$\QQ_p$-rational $d$-plane 
providing that 
$$
n \ge 5+2d.
$$ 
The case $d=0$ corresponds to the well-known fact 
that every quadratic form in at least five variables is isotropic over
$\QQ_p$. The general case follows from inserting this fact into 
work of Leep \cite[Corollary~2.4~(ii)]{leep}. 
Moreover, the quadric $Q=0$ 
contains a real $d$-plane, provided that 
$d\leq n-1-\max(r,s)$, 
where $(r,s)$ is 
the signature of $Q$. 
The existence of a $d$-plane in the quadric everywhere locally 
is enough to ensure the existence of a
$\QQ$-rational $d$-plane $\Lambda$ contained in the quadric,
by the Hasse principle for linear spaces on quadratic
forms (see the proof of \cite[Theorem 2]{BDLW}, for example).
As soon as $d\geq 13$ we may apply the main result in work of
Heath-Brown \cite{14}, which shows  
that $C=0$ has a rational point on $\Lambda$, giving a rational
point on $X$. 
Finally, it is clear that we may take $d=13$ whenever
 $n \ge 31$ and 
$n-\max(r, s) \ge 14$. We record this observation as follows. 

\begin{theorem}
\label{elementary1}
Suppose that $n\geq 31$ and $Q$ has 
signature $(r,s)$, with  $\max( r, s) \le n-14$. Then $X(\QQ)\neq \emptyset$.
\end{theorem}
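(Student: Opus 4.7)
The plan is to produce a $\QQ$-rational $13$-plane $\Lambda$ lying on the quadric $Q=0$, and then apply the main result of \cite{14} to the restriction of $C$ to $\Lambda$, which is a cubic form in $14$ variables. This strategy is laid out verbatim in the discussion preceding the theorem, so the task is just to check that the hypotheses fit.

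First, I would verify the local conditions guaranteeing a $13$-plane on $Q=0$ at every place. At the finite places, the bound $n \ge 5 + 2d$ with $d=13$ requires $n \ge 31$, which is the first hypothesis; this comes from Leep's result \cite[Corollary~2.4~(ii)]{leep} combined with the classical isotropy of five-variable quadratic forms over $\QQ_p$. At the real place, a $13$-plane on $Q=0$ exists provided $13 \le n-1-\max(r,s)$, i.e.\ $\max(r,s) \le n-14$, which is the second hypothesis.

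With local existence in hand at all places, the Hasse principle for linear subspaces of quadrics (as recalled in the proof of \cite[Theorem~2]{BDLW}) produces a $\QQ$-rational $13$-plane $\Lambda \subset \{Q = 0\}$. Choosing a $\QQ$-linear parametrisation $\Lambda \cong \PP^{13}$ and pulling $C$ back yields a cubic form in $14$ variables over $\QQ$. The theorem of Heath-Brown \cite{14} supplies a nontrivial rational zero of this form, and the corresponding point of $\Lambda$ lies on both $Q=0$ and $C=0$, hence in $X(\QQ)$.

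There is no real obstacle here: every ingredient has already been assembled in the paragraph preceding the statement, and the numerical hypotheses $n \ge 31$ and $\max(r,s) \le n-14$ are precisely what is needed to push $d$ up to $13$, the threshold for applying \cite{14}. The theorem is therefore essentially a bookkeeping corollary of Leep's theorem, the Hasse principle for linear spaces on quadrics, and the $14$-variable cubic result.
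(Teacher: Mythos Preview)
Your proposal is correct and matches the paper's own argument essentially line for line: the paper assembles exactly these ingredients (Leep's bound for $p$-adic $d$-planes, the real signature condition, the Hasse principle for linear spaces on quadrics, and Heath-Brown's $14$-variable cubic theorem) in the paragraph preceding the statement and then records Theorem~\ref{elementary1} as the resulting observation.
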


It is worthwhile noting that when  working over totally imaginary
number fields $k$,  the assumption on the signature of the quadratic
form can be removed.  
Appealing to work of Pleasants \cite{P}, which is valid for cubic
forms in at least $16$ variables over any number field,  
one concludes that $X(k)\neq \emptyset$ provided only that $n\geq 5+2(16-1)=35$.

Our next results are established using the Hardy--Littlewood circle
method directly.  We write  $\Xns$ for  the smooth locus  of
 points on $X$.   
Recall that the smooth
Hasse principle is said to hold for a family of such varieties when
the existence of a point in  
$\Xns(\A)=\Xns(\RR)\times \prod_p \Xns(\QQ_p)$,
where $\A$ denotes the ad\`eles, is enough to ensure the existence of
a smooth 
$\QQ$-rational point in $X$. 
Given a form $F\in \QQ[x_1,\ldots, x_n]$,   
we define the {\em $h$-invariant} $h(F)$ to be the least positive integer
$h$ such that the  
$F$ can be written identically as 
$$
A_1B_1 +\cdots+A_hB_h,
$$
for forms $A_i,B_i\in\QQ[x_1,\ldots,x_n]$ of positive degree. 
Taking $R=5$,  $r_3=r_2=1$ and $k=3$ in work of Schmidt \cite[Theorem
II]{schmidt}, we obtain the smooth Hasse principle for $X$ provided
that $h(C)\geq 480$ and $h(Q)\geq 30$. We note here that one 
clearly has $\rank(Q)\le 2h(Q)$ for any quadratic form, so that it 
suffices to have $h(C)\geq 480$ and $\rank(Q)\geq 59$. 
With this in mind we state the following result.

\begin{theorem}\label{main'} 
Write $\rank(Q)=\rho$.  Then the smooth  Hasse principle 
holds for $X$ provided that 
$$
(h(C)-32)(\rho-4)>128.
$$
In particular it suffices to have 
$\min( h(C),\rho)\ge 37$.

If $C$ is non-singular then the 
smooth  Hasse principle holds for $X$ provided that 
$$
(n-32)(\rho-4)>128.
$$
\end{theorem}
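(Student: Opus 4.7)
The proof should proceed via the Hardy--Littlewood circle method applied directly to the system $\{C=0,\,Q=0\}$, in the Birch--Schmidt framework but with sharper bookkeeping of the $h$-invariant contributions from the two forms. I would fix a smooth weight $w$ supported in a box in $\RR^n$ on which a smooth real zero of the system exists (guaranteed by $\Xns(\RR) \neq \emptyset$), and study
\[
N(B) \;=\; \sum_{\x \in \ZZ^n} w(\x/B)\, \1_{C(\x)=0}\,\1_{Q(\x)=0} \;=\; \int_{[0,1]^2} S(\bal)\, d\bal,
\]
where $S(\bal) = \sum_{\x} w(\x/B)\, e(\alpha_1 C(\x)+\alpha_2 Q(\x))$. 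The target is the asymptotic $N(B) \sim c\, B^{n-5}$ with $c>0$, which for $B$ large enough would force a smooth $\QQ$-rational point on $X$.

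After a standard major/minor arc dissection, the major arc analysis yields the expected product of a singular integral and a singular series, each factor positive thanks to the adelic non-emptiness hypothesis $\Xns(\A) \neq \emptyset$. The crux lies in the minor arc bound. I would Weyl-difference the combined exponential sum, handling the cubic and quadric parts in concert: two differencings on the cubic produce bilinear forms whose effective rank is governed by $h(C)$, whereas a single differencing on the quadric produces linear forms whose rank equals $\rho$. Following the Schmidt argument with $R=5$, $r_3=r_2=1$, $k=3$ cited in the excerpt, but tracking the savings from the two forms \emph{multiplicatively} rather than imposing independent lower bounds on $h(C)$ and $h(Q)$, one should obtain a Weyl-type estimate on the minor arcs that is non-trivial precisely when $(h(C)-32)(\rho-4) > 128$. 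Heuristically, the constants $32$ and $4$ encode the intrinsic overhead of cubic and quadratic Weyl differencing respectively, and $128$ is the threshold that the product of the two effective ranks must exceed for the minor arc integral to be $o(B^{n-5})$.

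For the final assertion of the theorem, I would use that when $C$ is non-singular the Birch singular locus entering the Weyl bound on the cubic part is trivial, so its contribution to the effective rank becomes $n$ rather than $h(C)$; inserting this replacement into the coupled inequality yields $(n-32)(\rho-4) > 128$. The main technical obstacle is verifying that the rank contributions from $C$ and $Q$ really combine multiplicatively in the Weyl bound: since the two forms share all $n$ variables, one has to check that the bilinear forms produced by differencing $C$ remain in general position relative to $Q$ on the minor arcs, so that the differencing variables used on the cubic do not degrade the linear form coming from the quadric, and vice versa. This joint rank computation is precisely what sharpens Schmidt's independent hypotheses $h(C) \geq 480$ and $h(Q) \geq 30$ into the tight coupled inequality of the theorem.
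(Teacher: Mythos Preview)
Your overall framework is correct, but you are missing the central mechanism that makes the coupled inequality work. The difficulty you flag at the end---that standard Weyl differencing on $e(\alpha_1 C(\x)+\alpha_2 Q(\x))$ is \emph{insensitive} to $Q$, because two differencings kill the quadratic part entirely---is not a bookkeeping issue but a genuine obstruction. Simply ``tracking the savings multiplicatively'' in the Birch--Schmidt argument does not work: after two squarings only the bilinear forms of $C$ survive, so no information about $\rho$ enters, and one squaring alone on the combined phase still has the cubic terms dominating.

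The paper's solution is a two-phase attack with a van der Corput--type twist. First, two Weyl differencings (ignoring $Q$) show that either $|S|$ is small or $\alpha_3$ has a good rational approximation $b_3/s$ with $s(1+P^3|\phi_3|)\ll P^{\ve}T_3^8$, where $|S|=P^nT_3^{-h}$. Second---and this is the key idea you are missing---one splits the sum into residue classes modulo $s$, applies Cauchy--Schwarz, and writes $\x\mapsto\x+s\z$. Because $s\mid C(\x+s\z)-C(\x)$, the cubic contribution to the differenced phase is only $\phi_3\{C(\x+s\z)-C(\x)\}$, which is small; Poisson summation then isolates the linear phase $s\alpha_2\nabla Q(\z)\cdot\x$ coming from $Q$. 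This is what allows $\rho=\rank(Q)$ to enter the estimate and produces a second parameter $T_2$ with $|S|=P^nT_2^{-\rho}$. The two bounds are then played off against each other on dyadic ranges of $T_3$, and the arithmetic of combining them is what generates the specific constants $32$, $4$, $128$. Without this van der Corput step there is no route from Weyl differencing to a bound involving $\rho$ at all.
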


There is an old result of  Birch \cite{birch} which establishes the
smooth Hasse principle for complete intersections $V\subset \PP^{n-1}$
cut out by  forms  
$F_1,\ldots,F_R$
of equal degree $d$, provided that the inequality
$$ 
n-\dim V^*> R(R+1)(d-1)2^{d-1}
$$
holds, 
where $V^*$ is the affine variety cut out by the 
condition 
\[\rank (\nabla F_i)_{1\leq i\leq R}<R.\] 
It is not entirely clear how this method could be adapted to handle 
a system of forms of differing degree, since the process of Weyl
differencing involved in the proof eradicates the presence of the
lower degree forms.  A satisfactory treatment of this issue is a key
ingredient in Theorem \ref{main'}. 
Schmidt encounters the same problem in the work \cite{schmidt} cited
above, and deals with it in a simpler but less effective manner.  When
the exponential sums involved have only one variable the ``final
coefficient lemma'' (see Baker \cite[Section 4.2]{bb}) gives very good
results.  However this relies ultimately on the use of strong
bounds for complete exponential sums, which are not available when one has 
several variables.

When $X$ is assumed to be non-singular we will show in Corollary~\ref{c:3.2}
that the cubic form $C$ can be taken to be non-singular with the
quadratic form $Q$ having  
rank $\rho\geq n-1$. Theorem \ref{main'} therefore implies that the
Hasse principle holds for  
non-singular $X$ provided that $n\geq 37$. 
The following result improves on this further.

\begin{theorem}\label{main}
Suppose that $X$ is non-singular, with $n \geq 29$.
Then $X(\QQ)\not=\emptyset$ if and only if $X(\RR)\not=\emptyset$.
\end{theorem}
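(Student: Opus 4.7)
The plan is to reduce Theorem \ref{main} to Hooley's Hasse principle for non-singular cubic forms in at least nine variables, by producing a $\QQ$-rational $12$-plane $\Lambda \subset \{Q = 0\}$ on which the restriction $C|_\Lambda$ is non-singular. Begin by invoking Corollary \ref{c:3.2} to normalise, so that $C$ is non-singular and $\rho \ge n - 1 \ge 28$. Since $n \ge 29 = 5 + 2 \cdot 12$, Leep's theorem (as discussed before Theorem \ref{elementary1}) provides a $\QQ_p$-rational $12$-plane in $\{Q = 0\}$ at every finite prime. At the real place such a $12$-plane exists whenever the signature satisfies $\min(r, s) \ge 13$, equivalently $\max(r, s) \le n - 13$, and under this condition the Hasse principle for linear subspaces on quadrics (as in the proof of \cite[Theorem 2]{BDLW}) yields a $\QQ$-rational $12$-plane $\Lambda \subset \{Q = 0\}$.

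A Bertini-type genericity argument, exploiting the non-singularity of $X$, then allows $\Lambda$ to be chosen so that $C|_\Lambda$ defines a non-singular cubic hypersurface in $\Lambda \cong \PP^{12}$: the locus of $12$-planes contained in $\{Q = 0\}$ for which $C|_\Lambda$ is singular is a proper closed subvariety of the Grassmannian of such planes, and $\QQ$-points in its complement are dense. The restricted form $C|_\Lambda$ is then a non-singular cubic in $13$ variables, possessing non-trivial $\QQ_p$-zeros at every prime $p$ by Lewis's theorem (since $13 \ge 10$) and a non-trivial real zero (any non-zero cubic in at least two variables takes values of both signs). Hooley's Hasse principle for non-singular cubic forms in at least nine variables then supplies a non-trivial $\QQ$-rational zero of $C|_\Lambda$, giving the desired point of $X(\QQ)$.

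The main obstacle is the complementary signature range $\min(r, s) \le 12$, in which no real $12$-plane lies on $\{Q = 0\}$ and the strategy above breaks down at the archimedean place. Here the hypothesis $X(\RR) \ne \emptyset$ must be exploited more delicately: one fixes a smooth real point $P \in X(\RR)$ and studies the local tangent geometry of $Q$ and $C$ at $P$, seeking either a $\QQ$-rational isotropic subspace approximating $P$ in the relevant Grassmannian, or a reduction to a lower-dimensional non-singular complete intersection $X' \subset X$ on which Theorem \ref{main'} applies. Handling this case is what allows the threshold to drop from $n \ge 31$, the naive bound from a $13$-plane search combined with Heath-Brown's theorem \cite{14}, down to $n \ge 29$, and is where I expect the bulk of the technical work to lie.
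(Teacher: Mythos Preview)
Your proposal takes a completely different route from the paper, and it has genuine gaps that prevent it from yielding an unconditional proof.

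The paper proves Theorem \ref{main} by a direct application of the Hardy--Littlewood circle method to the counting function $N_\omega(X;P)$, with no reduction to a lower-dimensional cubic. After normalising via Corollary \ref{c:3.2} so that $C$ is non-singular and $\rho\ge n-1$, the minor arcs are handled by combining the Weyl-differencing bounds of Section \ref{s:4} with the Poisson-summation bound of Lemma \ref{lem:main-poisson}; the resulting estimate (Lemma \ref{lem:minarcest}) is valid precisely for $n\ge 29$, and the major arcs are treated in Lemma \ref{lem:major}. No signature hypothesis on $Q$ is needed beyond the existence of a smooth real point on $X$.

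Your approach, by contrast, is essentially a sharpening of the elementary argument behind Theorem \ref{elementary1}, and it breaks down in two places. First, Hooley's Hasse principle for non-singular cubic forms in nine variables is \emph{conditional} on a Riemann Hypothesis for certain Hasse--Weil $L$-functions; the best unconditional result remains Heath-Brown's $14$ variables \cite{14}, which forces you back to $13$-planes and $n\ge 31$. Second, you explicitly leave the case $\min(r,s)\le 12$ unhandled, and this is not a peripheral case: nothing in the hypotheses of Theorem \ref{main} bounds the signature, so a genuine proof must cover it. Your suggested fix (local tangent geometry at a real point, or reduction to Theorem \ref{main'}) is speculative; note that Theorem \ref{main'} itself only reaches $n\ge 37$ for non-singular $X$, so it cannot directly rescue the range $29\le n\le 36$. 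A smaller but real issue is the Bertini step: the Fano variety of $12$-planes on $\{Q=0\}$ is a proper subvariety of the full Grassmannian, so the claim that a generic such plane gives $C|_\Lambda$ non-singular does not follow from ordinary Bertini and needs a separate argument.
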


Theorem \ref{main} establishes the Hasse principle for non-singular
$X$, with $n\geq 29$. 
The issue of determining when $X$ has $p$-adic
points for every prime $p$ 
is of considerable interest in its own right. Artin's conjecture would
imply that it is sufficient to have $n>3^2+2^2=13$. 
Indeed it has been shown by Zahid \cite{zahid} that an arbitrary 
intersection $X: C=Q=0$ with $n>13$ has $X(\QQ_p)\neq \emptyset$ for 
every prime $p>293$. 
However if $n\ge 29$ we can in fact  
recycle the proof of Theorem \ref{elementary1} to deduce that 
the quadric hypersurface
$Q=0$ contains 
a $\QQ_p$-rational projective space of dimension at least 
$ \lceil(\rho-6)/2\rceil$. 
The existence of a point in $X(\QQ_p)$ is then assured by 
an old result of Lewis \cite{lewis}, which shows that the cubic $C=0$
has a $\QQ_p$-rational point on any $\QQ_p$-rational 
projective linear space of dimension $9$ or more. 

Our proof of Theorems \ref{main'} and \ref{main} is based on the
Hardy--Littlewood circle 
method.  We will give an overview of the proof in Section \ref{s:2}. 
As is usual with the circle method our arguments show not only that 
$X(\QQ)$ is non-empty, but may even be developed to  
establish weak approximation. 
Moreover, we can prove a variant of Theorem \ref{main} which applies
to singular $X$.  Suppose that $\sigma\geq -1$ is the dimension of the
singular locus of $X$, with the
convention that $\sigma=-1$ if and only if $X$ is
non-singular. Then an argument based on Bertini's theorem can be
used to show that the smooth Hasse principle holds for   
$X$, provided that $n\geq 30+\sigma$. 
We leave the details of both of these remarks to the reader.  

To state our remaining  result, we need to introduce some more terminology.
If $F \in K[x_1, \ldots, x_n]$ for some field $K$, then
we define the \emph{order}
of $F$ to be the minimal non-negative integer $m$ such that there
exists a 
matrix $\mathbf{T}\in \mathrm{GL}_n(K)$ 
with the property that in
$
  F(\mathbf{T}(x_1, \ldots, x_n))
$
only $m$ of the variables $x_1, \ldots, x_n$
occur with a non-zero coefficient.
It is a familiar fact that the order of $F$ is independent of the 
field of definition $K$. 
If $Q \in \QQ[x_1, \ldots, x_n]$ is a quadratic form, then we call a
pair of  cubic  
forms $C_1, C_2 \in \QQ[x_1, \ldots, x_n]$  \emph{$Q$-equivalent}
if there exists a linear form $L \in \QQ[x_1, \ldots, x_n]$ such that
$$
C_1-C_2=LQ.
$$ 
It is easily checked that this indeed defines an
equivalence relation on the set of rational cubic forms, and that the
set of zeros of the intersection $C=Q=0$ does not change if one replaces
$C$ by another cubic form that is $Q$-equivalent to $C$.
Finally, for a fixed quadratic form $Q \in \QQ[x_1, \ldots, x_n]$ and 
cubic form $C_1 \in \QQ[x_1, \ldots, x_n]$, we define the
\emph{$Q$-order} $\qorder(C_1)$ of $C_1$ to be the minimal order
amongst all cubic forms $C_2$ that are $Q$-equivalent to $C_1$.
We are now ready to reveal the following result.

\begin{theorem}
\label{small_h}
Suppose that $n\ge 49$ 
and $\qorder(C) \ge 17$, and that $\Xns(\RR)\neq \emptyset$.  
Then $X(\QQ) \ne \emptyset$. 
\end{theorem}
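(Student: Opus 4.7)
The plan is to split the analysis according to the magnitudes of $\rho = \rank(Q)$ and $h = h(C)$, reducing each case either to Theorem~\ref{main'} or to a direct geometric construction that locates a $\QQ$-rational point on a linear subspace of $X$.

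If $\rho \le n - 14$, the radical of $Q$ (the kernel of its associated bilinear form) is a $\QQ$-rational linear subspace of $\AA^n$ of dimension $n - \rho \ge 14$ on which $Q$ vanishes identically. The restriction of $C$ to this radical is a cubic form in at least $14$ variables and, by Heath-Brown~\cite{14}, admits a non-trivial $\QQ$-rational zero, which is a point of $X(\QQ)$.

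If instead $\rho \ge n - 13$ (so $\rho \ge 36$) and $h \ge 37$, then $(h - 32)(\rho - 4) \ge 5 \cdot 32 = 160 > 128$, so Theorem~\ref{main'} applies as soon as smooth adelic solubility is verified. The archimedean place is handled by the hypothesis $\Xns(\RR) \neq \emptyset$; at each prime $p$, the argument recycled from Theorem~\ref{elementary1} (Leep's extension of Meyer's theorem coupled with Lewis~\cite{lewis}) produces a smooth $\QQ_p$-rational point of $X$, since $\rho \ge 24$ guarantees a $\QQ_p$-rational linear space of dimension $\ge 9$ on $Q = 0$.

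The remaining, and most delicate, case is $\rho \ge n - 13$ and $h \le 36$. Writing $C = L_1 M_1 + \cdots + L_h M_h$ with $L_i$ linear and $M_i$ quadratic over $\QQ$, consider the $\QQ$-rational subspace $W = \{L_1 = \cdots = L_h = 0\} \subset \AA^n$ of dimension $\ge n - 36 \ge 13$, on which $C$ vanishes identically; any non-trivial rational zero of $Q|_W$ therefore lies in $X(\QQ)$. If $Q|_W$ is degenerate on $W$, its kernel supplies such a vector at once. If $Q|_W$ is non-degenerate, the crux of the argument is that $\qorder(C) \ge 17$, in concert with the decomposition $C = \sum L_i M_i$, prevents $\rank(Q|_W)$ from being too small and prevents $Q|_W$ from being real-definite---otherwise one could exhibit a linear form $L$ with $\mathrm{order}(C + LQ) < 17$, contradicting the $Q$-order hypothesis. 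Granting this rank lower bound and real indefiniteness, Meyer's theorem yields $\QQ_p$-isotropy at every finite prime while the smooth real point supplies real isotropy, so the Hasse--Minkowski principle delivers a $\QQ$-rational isotropic vector in $W$. The principal obstacle lies in making this structural step precise: quantifying how $\qorder(C) \ge 17$ forces $Q|_W$ to be sufficiently isotropic at every completion. This is the geometric incarnation of the replacement of $h(C)$ by the $Q$-order invariant that the introduction identifies as essential for mixed-degree problems; here it is exploited via the $h$-invariant decomposition of $C$ rather than analytically via Weyl differencing.
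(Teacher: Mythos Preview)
Your case split is the right one, and the first two cases are essentially in line with the paper. For $\rho\le n-14$ the radical argument is exactly what the paper does. For $\rho\ge n-13$ and $h(C)\ge 37$ the appeal to Theorem~\ref{main'} is correct, but you assert without proof that the Leep--Lewis argument yields a \emph{smooth} $\QQ_p$-point on $X$; it only yields a $\QQ_p$-point. Upgrading to a smooth point is the content of the paper's Lemma~\ref{smoothp}, whose proof is not routine (it passes to a quartic hypersurface birational to $X$ and uses Zariski-density of $p$-adic points there).

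The genuine gap is in your third case. You restrict $Q$ to the linear space $W=\{L_1=\cdots=L_h=0\}$ and want Hasse--Minkowski to give a rational isotropic vector for $Q|_W$. For this you need $Q|_W$ to be real-indefinite, and you claim that $\qorder(C)\ge 17$ forces this, ``otherwise one could exhibit a linear form $L$ with $\mathrm{order}(C+LQ)<17$''. No such implication is proved, and I do not see how to extract one: definiteness of $Q$ on the particular subspace $W$ tells you nothing about the order of $C+LQ$ on all of $\AA^n$. The hypothesis $\Xns(\RR)\neq\emptyset$ does not help either, since the smooth real point need not lie in $W$. You yourself flag this as the ``principal obstacle'', and it is a real one rather than a technicality.

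The paper's route in this case is quite different and explains where the $\qorder$ hypothesis is actually consumed. After arranging $h(C)=h_Q(C)=h$ one splits $\x=(\u,\v)$ with $\u\in\QQ^h$ and $\v\in\QQ^{n-h}$, and for each non-zero $\mathbf a\in\QQ^h$ obtains a \emph{pair of quadratic forms} $C_{\mathbf a}(t,\v),\,Q_{\mathbf a}(t,\v)$ in $n-h+1\ge 14$ variables whose common zero $(t_0,\v_0)$ gives the point $(t_0\mathbf a,\v_0)$ on $X$. The Hasse principle for such pairs (Colliot-Th\'el\`ene--Sansuc--Swinnerton-Dyer, packaged as Lemma~\ref{SD}) then does the work, provided one can find $\mathbf a$ for which the pair has a smooth real zero and every pencil member has rank $\ge 5$. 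Producing such $\mathbf a$ (Lemma~\ref{good}) already uses $\qorder(C)\ge h+1$, which in turn requires $\qorder(C)\ge 14$ via the inequality $h_Q(C)\le\qorder(C)-1$. If the rank condition on the pencil fails for every good $\mathbf a$, a further structural reduction (Lemma~\ref{splin}) forces $C$ into a shape where either a linear-plus-quadric argument applies, or $C$ depends only on $\u$; in the latter situation one needs real density of rational points on the cubic $C(\u)=0$, and it is precisely here that the full strength $\qorder(C)\ge 17$ is used.
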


The hypothesis 
$\qorder(C) \ge 17$ in the previous theorem
can be weakened to $\qorder(C) \ge 14$, provided that we 
impose the additional assumption that 
for any cubic form that is $Q$-equivalent to $C$, if the 
corresponding cubic hypersurface has rational points then 
they are dense in the locus of real points. 

Simple considerations show that Theorem \ref{small_h} 
could not be true without
some sort of assumption on the $Q$-order of $C$.
We assume that $n\geq 49$, in order to fall within the range of the theorem.
Let $m\leq n$ and suppose that $C\in \QQ[x_1,\ldots,x_m]$ is a cubic
form for which  
$C=0$ has no $\QQ$-rational point. 
In particular  $C$ must be non-degenerate, so that $m$ is the order of $C$.
Let $X$ be the variety cut out by $C$ and the 
quadratic form
\[
  Q(x_1, \ldots, x_n) = -x_m^2+x_{m+1}^2+\cdots +x_n^2.
\]
It is clear that $\Xns(\RR)\neq \emptyset$
and  $\qorder(C)= m$.
Any rational point on $X$ would lie on $C=0$, so that
$x_1=\cdots=x_m=0$. 
Then $Q=0$ implies that $x_{m+1}=\cdots=x_n=0$, whence in 
fact $X(\QQ)=\emptyset$. 
This example shows that if one had a version of 
Theorem \ref{small_h} in which the condition on the $Q$-order of $C$ 
were relaxed to $\qorder(C) \ge 13$ then we would be able to 
deduce that any cubic over $\QQ$ in 13 variables has a non-trivial 
rational zero.  In particular any such improvement of Theorem~\ref{small_h} 
would lead to a corresponding sharpening of the result 
of \cite{14}. 

Mordell \cite{mordell} has  constructed a non-degenerate cubic form
$C$ in $9$ variables 
for which $C=0$ has no $\QQ_p$-point for some prime $p$, 
and hence has no point over $\QQ$. 
This shows that, aside from extending the range for $n$, the best one
can hope for 
in Theorem \ref{small_h} 
is a reduction of the lower bound on the $Q$-order of $C$ to
$\qorder(C)\geq 10$. 
Moreover, our example
shows that it is really the $Q$-order of $C$ that matters rather than the
order, since we could replace $C$ by $C+LQ$ for a linear form $L$ 
and in this way increase the order of the 
cubic form.

\begin{notation}
Throughout our work $\NN$ will denote the set of positive
integers.  For any $\al\in \RR$, we will follow common convention and
write $e(\al):=e^{2\pi i\al}$ and $e_q(\al):=e^{2\pi i\al/q}$. 
The
parameter $\ve$ will always denote a small positive real
number.
We shall use $|\x|$ to denote the norm $\max |x_i|$ 
of a vector $\x=(x_1,\ldots,x_n)$. 
All of the implied constants that appear in this 
work will be allowed to
depend upon the coefficients of the forms $C$ and $Q$ under
consideration, the number $n$ of variables  involved, and the
parameter $\ve>0$.  Any further dependence will be explicitly
indicated by appropriate subscripts.
\end{notation}

\begin{ack}
Most of this work was carried out 
during the programme ``Arithmetic and geometry'' at the {\em Hausdorff
  Institute} in Bonn,  
for whose hospitality the authors are very grateful. 
While working on this paper the first  author was
supported by ERC grant \texttt{306457}. 
\end{ack}

\section{Overview of the paper}\label{s:2}

We have already established Theorem \ref{elementary1}. 
In Section \ref{s:3} we will collect together some geometric facts
that will be used in the proof of Theorems
\ref{main'}--\ref{small_h}. 
Theorems \ref{main'} and \ref{main} will be established using the
Hardy--Littlewood circle method. 
This will occupy the bulk of our paper (Sections \ref{s:4}--\ref{s:5}). 
Finally, in Sections \ref{s:10} and \ref{s:11},  we will turn to the
proof of Theorem \ref{small_h}.

The aim of the present section is to survey
the key ideas in the proof of Theorems \ref{main'} and \ref{main}. 
On multiplying through by a common denominator we can ensure that $C$
and $Q$ have coefficients in $\ZZ.$ 
In both results the goal will be to establish an asymptotic formula
for the quantity 
\begin{equation}\label{eq:def-N}
N_\omega(X;P):= \sum_{\substack{\x\in \ZZ^n\\ C(\x)=Q(\x)=0}} 
\omega(\x/P),  
\end{equation}
as $P\rightarrow \infty$, for a suitably chosen function $\omega:\RR^n
\rightarrow \RR_{\geq 0}$ with support in $(-1/2,1/2)^n$.  All of our
weight functions will be infinitely differentiable, with  bounded
Sobolev norms.  
The starting point in the circle method 
is the  identity
$$
N_\omega(X;P)=\int_{0}^1\int_0^1 S(\al_3,\alpha_2) \d\alpha_3\d \alpha_2,  
$$
where 
\begin{equation}
  \label{eq:S}
S(\al_3,\al_2):=\sum_{\x\in\ZZ^n}\omega(\x/P) e\left(\al_3 C(\x)+\al_2 Q(\x)\right),
\end{equation}
for any $\al_3, \al_2\in \RR$. 
The idea is then to divide the region $[0,1]^2$ into a
set of major arcs $\major$ and minor arcs $\minor$. 
In the usual way we seek to prove an asymptotic formula 
\begin{equation}\label{eq:major}
\iint_{\major} S(\al_3,\al_2) \d\al_3\d \al_2\sim c_X P^{n-5},
\end{equation}
as $P\rightarrow \infty$, together with a satisfactory bound on the minor arcs
\begin{equation}\label{eq:minor}
\iint_{\minor} S(\al_3,\al_2) \d\al_3\d \al_2  
=o( P^{n-5}).
\end{equation}
Here the constant $c_X$ will be a product of local densities, which is
positive when $\Xns(\A)$ is non-empty.

For any pair $\al_3,\al_2$ we will produce a simultaneous rational
approximation $a_3/q, a_2/q$ using a two dimensional version of
Dirichlet's approximation theorem.  To describe this we take positive
integers $Q_3,Q_2$ satisfying
\begin{equation}\label{eq:Q2Q3}
Q_3:= [P^{4/3}]\;\;\;\mbox{and}\;\;\; Q_2:= [P^{1/3}].
\end{equation}
Then, by the pigeon hole principle, there will be 
$\a=(a_3,a_2)\in \ZZ^2$ and $q\in \NN$
such that $q\le Q_3Q_2$ and $\gcd(q,\a)=1$, for which
\begin{equation}\label{eq:al2al3}
\left|\al_3-\frac{a_3}{q}\right|\le\frac{1}{qQ_3},\;\;\;
\mbox{and}\;\;\; \left|\al_2-\frac{a_2}{q}\right|\le\frac{1}{qQ_2}.
\end{equation}
It will therefore be convenient to write
$$
\al_3=\frac{a_3}{q}+\theta_3\;\;\;\mbox{and}\;\;\;\al_2=\frac{a_2}{q}+\theta_2.
$$

Let $\delta\in(0,1/3)$ be a parameter to be decided upon later (see
\eqref{eq:choosedelta}). 
We will take as major arcs
$$
\major:=\bigcup_{q\leq P^{\delta}}\bigcup_{\substack{\a\bmod{q}\\
\gcd(q,\a)=1}}\major_{q,\a},
$$
where
$$
\major_{q,\a}:=\left\{(\alpha_3,\alpha_2)\bmod{1}: 
\left|\alpha_i-\frac{a_i}{q}\right|\leq
P^{-i+\delta}, \mbox{ for $i=3,2$}\right\}. 
$$
It is easy to see that $\major_{q,\a}\cap \major_{q',\a'}=\emptyset$
whenever  $\a/q\neq \a'/q'$, provided that $P$ is taken to be
sufficiently large. 
Moreover each major arc is contained in the corresponding range given
by \eqref{eq:al2al3}.

Our treatment of \eqref{eq:major}
is relatively
standard and is the focus of Section \ref{s:5}.

The  minor arcs are defined to be $\minor=[0,1]^2\setminus \major$.
Thus they are defined by having either $q>P^{\delta}$ or
$\max(|\theta_3|P^3\,,\,|\theta_2|P^2)>P^{\delta}$. 
Our estimation of $S(\alpha_3,\alpha_2)$ for $(\alpha_3,\al_2)\in \minor$
will differ according to the hypotheses placed on $X$. A common
ingredient will be a more efficient  version of Weyl differencing,
which draws inspiration from the work of Birch \cite{birch}, but which
is specially adapted to systems of equations of differing degree. 
Suppose that
$$C(x_1,\ldots,x_n)=\sum_{i,j,k=1}^n c_{ijk}x_ix_jx_k,
$$
for integer coefficients $c_{ijk}$ that are symmetric in the indices
$i,j,k$.  Define the bilinear 
forms 
$$
B_i(\x;\y):=3!\sum_{j,k=1}^n c_{ijk}x_j y_{k},  \quad (1\leq i\leq n).
$$
Using two successive applications of Weyl
differencing,  as in Birch's work, 
we can relate the size of the exponential sum
$S(\al_3,\al_2)$ to the locus of integral points on the affine variety
given by the simultaneous equations
$B_i(\x;\y)=0$, for $1\leq i\leq n$.  When  $C$ defines a smooth cubic
hypersurface, or when $h(C)$ is sufficiently
large, we shall be able to get good estimates for $S(\alpha_3,\al_2)$
unless $\alpha_3$ happens to be close to a rational number with small
denominator.  If this occurs then we shall use a single Weyl squaring,
modified in a way motivated by van der Corput's method so as to remove
the effect of the cubic terms. 
This step marks a departure from the approach of Birch, which is
completely insensitive to  
the quadratic form $Q$ that appears in the sum. 
Our modified version of Weyl differencing is the subject of Section
\ref{s:4}, and is one of the more novel parts of the paper. 
The work in this section will ultimately suffice to
establish Theorem \ref{main'} in Section \ref{s:4a}.  

When it comes to establishing Theorem \ref{main}, for which $X$ is
assumed to be non-singular, the work in Section \ref{s:4a}  only allows us
to establish an asymptotic formula for $N_\omega (X;P)$ when $n\geq
37.$ 
Instead, in Section \ref{s:7}, we shall produce a companion estimate for
$S(\alpha_3,\alpha_2)$, which is based on Poisson summation. Once
combined with the work in Section \ref{s:4}, this will lead to  
an asymptotic formula for $N_\omega (X;P)$ when $n\geq 29$, 
as required for Theorem \ref{main}.  One inconvenient feature of this
combined attack is that, while both methods involve rational
approximations to $\al_3$ and $\al_2$, there is no {\em a priori}
guarantee that the rational approximations occurring in the two
methods are the same.

\section{Geometric preliminaries}\label{s:3}

Let $k$ be a field of characteristic zero. 
Suppose $V\subset \PP^{n-1}$ is a non-singular complete intersection
of codimension $r$,  
whose homogeneous ideal in $k[\x]=k[x_1,\ldots,x_n]$ is generated by $r$ 
forms $F_1,\ldots,F_r\in k[\x]$.  
Suppose that the maximum degree attained by any form is attained by $F_1$.
One has a great deal of freedom in the choice of $F_1$, since one may
equally take $F_1+\sum_{1<i\leq r} H_{i} F_i$ for any forms $H_i\in
k[\x]$ such that  $\deg H_iF_i=\deg F_1$. 
In this way it is reasonable to expect that one can always arrange for
the leading form $F_1$ to be non-singular, provided that $V$ itself is
non-singular.  
This is made precise in the following result due to  Aznar \cite[\S 2]{aznar}.

\begin{lemma}\label{lem:aznar}
Let  $V\subset \PP^{n-1}$ be a non-singular complete intersection  of
codimension $r$, which is defined over a field $k$ of characteristic
zero.   
Then there is a system of generators $F_1,\ldots,F_r\in k[\x]$ of the
ideal of $V$,  
with 
$$
\deg F_1\geq \cdots \geq \deg F_r,
$$ 
such that the varieties
$$
W_i: \quad F_1=\cdots=F_i=0, \quad (i\leq r),
$$
are all non-singular.
\end{lemma}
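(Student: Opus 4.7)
The plan is to construct the $F_i$ inductively, starting from an arbitrary set of generators $F_1,\ldots,F_r$ of the ideal of $V$ already ordered by $\deg F_1 \geq \cdots \geq \deg F_r$, and modifying them one at a time. Inducting on $i$ from $1$ to $r-1$ (the case $i = r$ being trivial since $W_r = V$ is non-singular by hypothesis), I would seek at the $i$-th stage to replace the current $F_i$ by $F_i + \sum_{j > i} H_j F_j$ with forms $H_j \in k[\x]$ of degree $\deg F_i - \deg F_j \geq 0$, chosen so that $W_i := \{F_1 = \cdots = F_i = 0\}$ becomes non-singular. Since such a modification preserves the ideal and leaves $F_1, \ldots, F_{i-1}$ untouched, the previously established non-singularity of $W_1, \ldots, W_{i-1}$ is unaffected.

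The main tool is Bertini's theorem on the smooth variety $W_{i-1}$ (with the convention $W_0 := \PP^{n-1}$), applied to the linear system $\cL$ of divisors cut out on $W_{i-1}$ by the forms $G = F_i + \sum_{j > i} H_j F_j$ as the $H_j$ vary. Every member of $\cL$ vanishes on $V$; conversely, at any point $p \in W_{i-1}\setminus V$ some $F_k$ with $k \geq i$ is non-zero, and then either $F_i(p)\ne 0$ (so the trivial choice $H_j = 0$ works) or some $F_k(p)\ne 0$ with $k > i$ (so choosing $H_k$ to be a form of degree $\deg F_i - \deg F_k$ not vanishing at $p$, and all other $H_j = 0$, produces a member of $\cL$ not vanishing at $p$). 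Thus the base locus of $\cL$ on $W_{i-1}$ equals $V$ exactly. Since $k$ has characteristic zero and $W_{i-1}$ is smooth, Bertini then guarantees that a generic $G \in \cL$ gives $W_i = W_{i-1}\cap\{G = 0\}$ non-singular away from $V$.

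It remains to check non-singularity of $W_i$ on $V$ itself, where Bertini yields no information; here the hypothesis that $V$ is non-singular enters decisively, and in fact the conclusion holds for \emph{every} admissible choice of the $H_j$. At any $p \in V$ each $F_j(p) = 0$, so $\nabla G(p) = \nabla F_i(p) + \sum_{j>i} H_j(p)\nabla F_j(p)$, and the Jacobian of $(F_1,\ldots,F_{i-1},G)$ at $p$ consists of the rows $\nabla F_1(p),\ldots,\nabla F_{i-1}(p)$ together with this linear combination; any linear dependence among these $i$ rows would expand to a non-trivial dependence among $\nabla F_1(p),\ldots,\nabla F_r(p)$, contradicting the non-singularity of $V$. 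So the Jacobian has rank $i$ at $p$, and combining the two parts closes the induction. The main obstacle is really the base-locus computation --- particularly when some of the degree differences $\deg F_i - \deg F_j$ vanish, forcing the corresponding $H_j$ to be a constant --- together with the bookkeeping needed to sustain the inductive invariants; the rest is the standard Bertini-plus-Jacobian pattern.
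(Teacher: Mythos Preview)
Your proof is correct and follows essentially the same route as the paper's: induction on $i$, Bertini applied on $W_{i-1}\setminus V$ to choose $F_i$, and the non-singularity of $V$ to take care of the base locus. The only differences are cosmetic --- the paper works with the explicit spanning set $\{G_i,\,x_j^{d_i-d_k}G_k\}$ and phrases Bertini via the associated morphism $U\to\PP^{N-1}$, while you use the full linear system and the base-locus formulation of Bertini; and you spell out the Jacobian argument at points of $V$ where the paper simply says ``since $V$ is non-singular''.
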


\begin{proof}
To be precise Aznar works with $k=\CC$, but the adaptation to
arbitrary fields of non-zero characteristic is straightforward. We
give the proof here for the sake of completeness.  
We argue by induction on $i$, the case $i=0$ being trivial.  

Now let $i$ be such that $1\le i\leq r$.  Fix a system of generators
$$
F_1,\ldots,F_{i-1}, G_i,\ldots,G_r\in k[\x]
$$  
for the ideal of $V$, 
with 
$$
\deg F_1\geq \cdots \geq \deg F_{i-1}\geq \deg G_i \geq \cdots \geq \deg G_r, 
$$ 
such that the varieties $W_{1}, \ldots,W_{i-1}\subseteq \PP^{n-1}$ are
all non-singular. 
Suppose that $d_k=\deg G_k$, for $i\leq k\leq r$. 
Let us write 
$$
f_0=G_i, \quad f_{j,k}=x_j^{d_i-d_{k}}G_{k}, 
$$
for $1\leq j\leq n$ and $i< k\leq r$. This gives a system 
$$\mathbf{f}=(f_0,f_{1,i+1}, \ldots, f_{n,r})$$ of 
$N=1+n(r-i)$
forms in 
$k[\x]$
of degree $d_i$.  The set of points 
in $W_{i-1}$  for which $\mathbf{f}(\x)=\mathbf{0}$ 
 precisely coincides with the non-singular variety $V$. We let 
$U=W_{i-1}\setminus V$.
Consider the morphism  
$$
\pi: U\rightarrow \PP^{N-1},
$$ 
given by $[\x]\mapsto [\mathbf{f}(\x)]$.
Then an application of Bertini's theorem (see Harris \cite[Theorem
17.6]{harris}, for example) 
reveals that for a general hyperplane $H\subset \PP^{N-1}$ the fibre 
$\pi^{-1}(H)$ is non-singular.  This means that for a general choice
of $\lambda_0, \lambda_{j,k}\in k$, the  
degree $d_i$ form 
$$
F_i=\lambda_0 G_i +\sum_{\substack{1\leq j\leq n\\i< k\leq r}}
\lambda_{j,k} x_j^{d_i-d_{k}}G_{k} 
$$
is defined over $k$  and 
$U\cap \{F_i=0\}$ is non-singular. 
This implies that 
$$
W_i: \quad F_1=\cdots=F_i=0
$$
is non-singular, since $V$ is non-singular. 
The induction hypothesis therefore follows, which  completes the proof
of the lemma.  
\end{proof}

We apply this result to the complete intersection in Theorem
\ref{main} to deduce the following consequence.  

\begin{corollary}\label{c:3.2}
Let $X\subset \PP^{n-1}$ be a non-singular complete intersection, 
cut out by a cubic and quadric hypersurface defined over $\QQ$. 
Then there exists a non-singular cubic form $C\in \ZZ[\x]$ and a
diagonal quadratic form $Q\in \ZZ[\x]$ of rank at least $n-1$, such
that $X$ is given by  $C=Q=0$. 
\end{corollary}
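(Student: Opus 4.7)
The plan is to first invoke Lemma~\ref{lem:aznar} to arrange that the cubic form in the ideal is non-singular, then use the non-singularity of $X$ itself to force the quadric to have rank at least $n-1$, and finally to diagonalize the quadric over $\QQ$ and clear denominators. Write $X = \{C_0 = Q_0 = 0\}$ with $C_0, Q_0 \in \QQ[\x]$. Applying Lemma~\ref{lem:aznar} with $r = 2$ to the non-singular codimension-$2$ complete intersection $X$, we obtain a system of generators $F_1, F_2 \in \QQ[\x]$ of the ideal of $X$, with $\deg F_1 = 3$ and $\deg F_2 = 2$, such that the cubic hypersurface $\{F_1 = 0\}$ is non-singular. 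Set $C := F_1$ and $Q := F_2$. Since $X$ is a complete intersection of a cubic and a quadric, the degree-$2$ part of its homogeneous ideal is one-dimensional, so $Q$ is a non-zero scalar multiple of $Q_0$; in particular $\rank(Q) = \rank(Q_0)$.

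The heart of the argument is to show that $\rho := \rank(Q) \ge n-1$. Suppose for contradiction that $\rho \le n-2$. The singular locus of the quadric $\{Q = 0\}$ in $\PP^{n-1}$ is the projective linear subspace $\Lambda \subset \PP^{n-1}$ cut out by $\nabla Q = \0$, and $\Lambda$ has dimension $n-1-\rho \ge 1$. Since $\Lambda$ is positive-dimensional, the restriction $C|_\Lambda$ either vanishes identically, in which case $\Lambda \subset \{C = 0\}$, or defines a non-empty cubic hypersurface in $\Lambda \cong \PP^{n-1-\rho}$. Either way, the intersection $\Lambda \cap \{C = 0\}$ contains a geometric point $P$, which lies on $X$. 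At $P$ one has $\nabla Q(P) = \0$, so the Jacobian matrix of $(C, Q)$ at $P$ has rank at most $1$, contradicting the non-singularity of $X$. Hence $\rho \ge n-1$.

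To finish, choose $\T \in \mathrm{GL}_n(\QQ)$ so that $Q(\T\x)$ is a diagonal quadratic form, and replace $C$ and $Q$ by $C(\T\x)$ and $Q(\T\x)$, respectively. Both the non-singularity of the cubic hypersurface $\{C = 0\}$ and the rank of $Q$ are preserved under an invertible linear change of variables. Multiplying through by a suitable positive integer then clears denominators, so we may assume $C, Q \in \ZZ[\x]$, as required. The main obstacle is the rank bound, which ultimately rests on the geometric fact that any positive-dimensional projective linear subspace of $\PP^{n-1}$ meets a cubic hypersurface, together with the Jacobian criterion for smoothness of complete intersections.
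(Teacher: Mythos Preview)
Your proof is correct and follows essentially the same route as the paper: invoke Lemma~\ref{lem:aznar} to make the cubic non-singular, argue that a singular locus of dimension at least $1$ on the quadric would meet the cubic and produce a singular point of $X$, then diagonalize over $\QQ$ and clear denominators. The only cosmetic differences are that the paper diagonalizes before proving the rank bound, and it does not bother to relate the new $Q$ back to the original $Q_0$ (your observation that they are proportional is correct but unnecessary).
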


\begin{proof}
Taking $k=\QQ$ in Lemma \ref{lem:aznar} ensures the 
existence of a non-singular cubic form $C\in \QQ[\x]$ and a quadratic
form $Q\in \QQ[\x]$  
such that $X$ is given by $C=Q=0$. 
After a non-singular rational change of variables we may further
assume that $Q$ is diagonal.  
By multiplying through by a common denominator we can ensure that $C$
and $Q$ are both defined over $\ZZ$. 

Showing that  $\rank (Q)\geq n-1$ is equivalent to showing that the
quadric hypersurface $Q=0$ in $\PP^{n-1}$ must have  singular locus
of dimension less than $1$. But if the singular locus had positive
dimension its intersection with the  
cubic hypersurface $C=0$ would be non-empty and every point in it
would be a singular point of $X$. This contradicts the non-singularity
of $X$, which thereby completes the proof. 
\end{proof}

One of the hallmarks of Theorem \ref{small_h} is that it applies to
very general  
complete intersections $X\subset \PP^{n-1}$ cut out by a cubic
hypersurface $C=0$ and a quadric hypersurface $Q=0$.  
Let us define $h_Q(C)$ to be the minimal value of 
$h(C+LQ)$ as $L$ varies over all linear forms defined over $\QQ$.   
We
remark at once that $\qorder(C)\geq h_Q(C)$ and  
\begin{equation}\label{h-hq} 
h_Q(C)\le h(C)\le h_Q(C)+1.
\end{equation}  
We will require easily checked criteria  on the defining forms which
are sufficient to ensure that $X$ is absolutely irreducible. This is
the purpose of the following result.

\begin{lemma}\label{lem:irred}
Let $X\subset \PP^{n-1}$ be a variety
cut out by a cubic hypersurface $C=0$ and a quadric hypersurface
$Q=0$, both defined over $\QQ$. Assume that  
$\rank(Q)\geq 5$, that $\qorder(C)\ge 4$ and that $h_Q(C)\ge 2$. Then $X$ is
an absolutely irreducible variety of codimension $2$ and degree $6$.
\end{lemma}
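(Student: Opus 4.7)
The plan is to work over $\bar{\QQ}$ in the polynomial ring $R := \bar{\QQ}[x_1, \ldots, x_n]$ and to study the ideal $(C, Q)$. Since $\rank(Q) \geq 5 \geq 3$, the form $Q$ is absolutely irreducible, so $(Q)$ is a prime ideal and $A := R/(Q)$ is a graded integral domain of Krull dimension $n-1$. If $C$ were divisible by $Q$, then $C = LQ$ for a linear form $L$, giving $h_Q(C) \leq h(0) = 0$ and contradicting $h_Q(C) \geq 2$. Hence $\bar{C} \neq 0$ in $A$, the pair $(C, Q)$ forms a regular sequence of height $2$ in $R$, so $\dim X = n - 3$, and Bezout's theorem in $\PP^{n-1}$ yields $\deg X = 2 \cdot 3 = 6$.

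The heart of the argument is showing that $X$ is absolutely irreducible, for which I would first establish that $A$ is a unique factorization domain. If $\rank(Q) = n$, then $V(Q) \subset \PP^{n-1}$ is a smooth quadric of dimension $n-2 \geq 3$, so $\Pic(V(Q)) = \ZZ \cdot [\mathcal{O}(1)]$; the divisor class group of the normal affine cone $\Spec A$ is therefore trivial, making $A$ a UFD. If $\rank(Q) = r < n$, a linear change of coordinates allows one to write $Q = Q_0(y_1, \ldots, y_r)$ with $Q_0$ non-degenerate in $r \geq 5$ variables, so $A = \bigl(\bar{\QQ}[y_1, \ldots, y_r]/(Q_0)\bigr)[y_{r+1}, \ldots, y_n]$. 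The base ring is a UFD by the smooth case, and Gauss's lemma extends this to $A$.

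With $A$ a UFD in hand, suppose for contradiction that $X$ is reducible or non-reduced. Then the image $\bar{C}$, a homogeneous element of degree $3$ in the graded UFD $A$, factors non-trivially as $\bar{C} = \bar{A} \cdot \bar{B}$ with homogeneous $\bar{A}, \bar{B}$ of positive degree; balancing degrees forces $\deg A = 1$ and $\deg B = 2$ (a triple factorisation into linear pieces may be grouped this way as well). Choosing homogeneous lifts $A \in R_1$ and $B \in R_2$ gives $C - AB \in (Q)$, and a degree comparison then forces $C - AB = HQ$ for some linear form $H$. But now $h(C - HQ) = h(AB) \leq 1$, contradicting $h_Q(C) \geq 2$. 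Consequently $\bar{C}$ is irreducible in $A$, the ideal $(C, Q)$ is prime in $R$, and $X$ is an absolutely irreducible reduced variety of codimension $2$ and degree $6$.

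I expect the main obstacle to be verifying the UFD property of $A$ in the cone case $\rank(Q) < n$; this requires combining the class-group computation for the affine cone over a smooth quadric of dimension $\geq 3$ with the stability of the UFD property under polynomial extensions, and it is the reason the hypothesis $\rank(Q) \geq 5$ (rather than merely $\geq 3$) is indispensable.
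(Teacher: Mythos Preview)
Your argument has an appealing structure, and the identification of $A=\bar\QQ[x_1,\ldots,x_n]/(Q)$ as a UFD is correct and useful.  However the final step contains a genuine gap: the factorisation $\bar C=\bar A\cdot\bar B$ and the lift $C=AB+HQ$ are carried out over $\bar\QQ$, whereas both $h(\,\cdot\,)$ and $h_Q(\,\cdot\,)$ are defined using forms over $\QQ$.  From $C-HQ=AB$ with $A,B,H\in\bar\QQ[x]$ you cannot conclude $h_Q(C)\le 1$.

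This is not a technicality that can be patched by a generic descent.  Notice that your argument never invokes the hypothesis $\qorder(C)\ge 4$, yet that hypothesis is essential.  For a concrete obstruction, take a cubic field $k/\QQ$, a linear form $L$ defined over $k$ but not over $\QQ$, and set $C=N:=L\,L^{\sigma}L^{\sigma^2}$, with $Q$ a non-degenerate quadratic form in a disjoint set of at least five variables.  Then $X$ splits over $\bar\QQ$ into three Galois-conjugate components $L^{\sigma^i}=Q=0$, so $X$ is reducible; nevertheless $\rank(Q)\ge 5$ and $h_Q(C)\ge 2$, since $N$ is irreducible over $\QQ$ and remains so after adding $L'Q$ for any rational linear form $L'$.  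Your proof would claim irreducibility here, which is false; only the missing condition $\qorder(C)\ge 4$ excludes this example.

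The paper's proof faces exactly this issue.  After using Klein's theorem (the geometric counterpart of your UFD statement) to show that every component of $X$ is cut out on $Q=0$ by a hypersurface, hence has even degree, it must still rule out degree-$2$ components $L=Q=0$ with $L$ linear over $\bar\QQ$.  It then analyses the Galois orbit of $L$: if $L$ is defined over $\QQ$, or over a quadratic extension, one obtains $h_Q(C)\le 1$; if the three components are conjugate over a cubic extension, a more delicate argument with the conjugates shows that $C$ is $Q$-equivalent to a norm form, forcing $\qorder(C)\le 3$.  Your UFD approach can be salvaged along the same lines: factor $\bar C$ into irreducibles in $A$, observe that Galois permutes the prime factors up to units, and treat the orbit structures separately, with the single cubic orbit requiring $\qorder(C)\ge 4$.
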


We begin by showing that the lemma applies under the hypotheses of Theorem
\ref{main'}.  The condition $\rank(Q)\geq 5$ is automatically met.
For the first part of the theorem, which requires $h(C)\geq 33$, the
remaining conditions 
of Lemma \ref{lem:irred} are clearly met since $\qorder(C)\geq
h_Q(C)\geq 32$, by \eqref{h-hq}. 
For the second part of the theorem, which requires $C$ to be
non-singular and $n\geq 33$,  
we claim that $\qorder(C)\geq 4$ and $h_Q(C)\geq 2$. Indeed, if
$h_Q(C)=1$ then $C$ takes the shape $L_1Q+L_2Q_2$ for suitable linear
forms $L_1,L_2$ and a quadratic form $Q_2$, all defined over $\QQ$. 
Since $n\geq 33$ the intersection $L_1=L_2=Q=Q_2=0$ is non-empty and produces
a singular point of $C=0.$
Alternatively, if $\qorder(C)\leq 3$ then we could take $C$ to have 
the shape $C_1(x_1,x_2,x_3)+LQ$, for a suitable linear  
form $L$ and a suitable cubic form $C_1$, both defined over
$\QQ$. Again, since $n\geq 33$ we could find a singular point of $C=0$
by considering the intersection $x_1=x_2=x_3=L=Q=0$. 
This shows that the $X$ considered in Theorem \ref{main'} are
absolutely irreducible under the hypotheses presented there. 
  
For Theorem \ref{main} we see from Corollary \ref{c:3.2}
that we will have 
\[\rank(Q)\ge n-1\ge 28>5.  \]
Moreover a variety 
$Q=L'Q'=0$ will have singular points
wherever $Q=L'=Q'=0$.  Thus if $X$ is non-singular we must have
$h_Q(C)\ge 2$.  Similarly a variety
$Q(x_1,\ldots,x_n)=C'(x_1,x_2,x_3)=0$ will have singular points
wherever $Q(0,0,0,x_4,\ldots,x_n)=0$, so that if $X$ is non-singular we will
have $\qorder(C)\ge 4$. It follows that the lemma applies for Theorem
\ref{main}.  Finally, for Theorem \ref{small_h}, the lemma will apply
unless $h_Q(C)\le 1$ or $\rank(Q)\leq 4$. 

\begin{proof}[Proof of Lemma \ref{lem:irred}] 
Under the hypotheses of the lemma, the forms $C$ and $Q$ share no
common factor of positive degree. 
Hence $X$ is pure dimensional.  
Suppose that $X$ decomposes into irreducible components $Z_1\cup
\cdots \cup Z_t$. 
It follows from  B\'ezout's theorem (in the form given by
\cite[Example~8.4.6]{fulton}) 
that 
$$
\deg(Z_1)+\cdots +\deg(Z_t)\leq 6.
$$
Each 
$Z_i$ is an irreducible codimension $1$ divisor on the quadric
hypersurface $Q=0$. 
Let $Z$ be one of these components. 
Since $\rank(Q)\geq 5$, by hypothesis, it follows from Klein's theorem
(see Hartshorne \cite[Part II, Ex. 6.5(d)]{hart}) that 
there is an irreducible hypersurface $W\subset \PP^{n-1}$ such that 
$Z$ is the intersection of $W$ with the quadric $Q=0$, with multiplicity $1$. 
But then a further application of B\'ezout's theorem (see \cite[\S
8.4]{fulton}) implies that $\deg(Z)$ must be even. 

In order to conclude the proof of the lemma it clearly  suffices to 
show that $Z$ cannot have degree $2$. 
Suppose, for a contradiction, that $Z$ is quadratic. Then Klein's
theorem shows that $Z$ is given
by $L=Q=0$, say, where $L$ is a linear form defined over
$\overline{\QQ}$. 
It follows that $C$ must take the shape $LR+\tilde L Q$, where $\tilde L$ and $R$ 
are linear and quadratic forms respectively, defined over
$\overline{\QQ}$. Indeed if $k$ is the minimal field of definition for $L=0$
then we may choose $R$ and $\tilde L$ in such a way that they too are
defined over $k$.  Thus if $k=\QQ$ we will have $h_Q(C)\le 1$, contrary
to assumption. If $k$ is a quadratic extension of $\QQ$ then $Z$ and
its quadratic conjugate will be distinct components of $X$, and there
will therefore be a third component of degree 2, which must be defined
over $\QQ$.  We may then deduce as above that $h_Q(C)\le 1$. We cannot
have $[k:\QQ]>3$ since the number of components $Z_i$ is at most 3, so
that we are left with the case in which $k$ is cubic.

Let $L=L_1,L_2$ and $L_3$ be the three conjugates of $L$, 
and write $C=L_iR_i+\tilde L_i Q$ accordingly.  Thus
$LR+\tilde L Q=L_2R_2+\tilde L_2 Q$, so that $LR=0$ whenever
$L_2=Q=0$.  However the variety $L_2=Q=0$ is absolutely irreducible,
since $\rank(Q)\ge 5$, and it follows that one or other of $L$ and
$R$ must vanish whenever $L_2=Q=0$. The only hyperplane
containing $L_2=Q=0$ is the obvious one $L_2=0$, so in the first case
$L$ and $L_2$ must be proportional.  This however is impossible,
since we have eliminated the case in which the hyperplane $L=0$ is
defined over $\QQ$.  Thus $R$ must vanish on $L_2=Q=0$, so that
$R= L_2L_2'+c_{2}Q$ for some linear form $L_2'$ and constant
$c_2$, both defined over $\overline{\QQ}$.

In the same way we will have $R= L_3L_3'+c_3Q$, say.
Then
\[(c_2-c_3)Q=(R-L_2L_2')-(R-L_3L_3')=L_3L_3'-L_2L_2'.\] 
Since $\rank(Q)\ge 5$ this can happen only when $c_2=c_3$.  We
will write $c=c_2=c_3$ for this common value.  We
then have $L_3L_3'=L_2L_2'$, and since $L_2$ and $L_3$ are not
proportional, by the argument above, we see that $L_3'=\gamma L_2$ 
for some constant $\gamma$. Thus $R=\gamma
L_2L_3+c Q$, so that $C=\gamma L_1L_2L_3+(cL_1+\tilde L_1) Q$. 

We may now write $C=\gamma N+M Q$ 
where $N=L_1L_2L_3$ is a cubic
norm form, defined over $\QQ$, and $\gamma$ and $M$ 
are a constant
and a linear form respectively, both over $\overline{\QQ}$. Since $N$
and $Q$ have no common factor this representation must be unique, so
that in fact $\gamma$ and $M$ are defined over $\QQ$.  We then
deduce that $\qorder(C)\le\qorder(\gamma N)\le 3$, contrary to
our hypotheses.  The lemma therefore follows.
\end{proof}

To deal with the local solubility conditions in Theorem \ref{small_h}, 
we will also need some information  about varieties over local fields. 
The following fact is certainly well-known (see Koll\'ar \cite[\S
2.3]{kollar}, for example), but  we recall the proof here for
completeness.  

\begin{lemma}\label{lem:density}
Let $k$ be  $\RR$ or a finite extension of a $p$-adic field $\QQ_p$. 
Let $V$ be an absolutely irreducible projective variety defined
over $k$ with a smooth $k$-point. Then 
$V(k)$ is  dense in $V$ under the Zariski topology.
\end{lemma}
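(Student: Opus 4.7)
The plan is to apply the $k$-analytic implicit function theorem at the smooth $k$-point to produce a non-empty analytic open subset of $V(k)$, and then to invoke the identity principle for convergent power series over $k$ to show this open subset cannot lie inside any proper Zariski-closed subvariety.

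First I would fix a smooth $k$-point $P\in V(k)$ and set $d=\dim V$. In an affine chart containing $P$, local equations cutting out $V$ have Jacobian of maximal rank $N-d$ at $P$ by smoothness. The $k$-analytic implicit function theorem—valid uniformly for $k=\RR$ and for finite extensions of $\QQ_p$—then furnishes a $k$-analytic homeomorphism $\phi:B\to U$, with $B\subset k^d$ an open ball around $\0$ satisfying $\phi(\0)=P$, and $U$ an open neighborhood of $P$ in $V(k)$ in the $k$-analytic topology. Concretely, after reordering coordinates, one may arrange $\phi(x_1,\ldots,x_d)=(x_1,\ldots,x_d,h_{d+1}(\x),\ldots,h_N(\x))$ for convergent $k$-analytic functions $h_i$.

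Next, to show Zariski density, it suffices to verify $V(k)\not\subset W$ for every proper Zariski-closed $W\subsetneq V$. Since $V$ is absolutely irreducible, one may pick an affine Zariski-open $V'\subset V$ containing $P$ together with a regular function $f\in\mathcal{O}(V')$ that vanishes on $W\cap V'$ but not on $V'$ itself. After shrinking $B$ so that $\phi(B)\subset V'(k)$, the pullback $\tilde f=f\circ\phi$ is a $k$-analytic function on $B$, and any $\x\in B$ with $\tilde f(\x)\neq 0$ produces a point $\phi(\x)\in V(k)\setminus W$. The problem reduces to showing $\tilde f\not\equiv 0$ on $B$.

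The crux—and the step I expect to require the most care—is this non-vanishing. If $\tilde f$ vanished identically on $B$, then since $k$ has characteristic zero the Taylor coefficients of the convergent power series representing $\tilde f$ near $\0$ could be recovered from iterated derivatives at $\0$ and would all vanish, so $\tilde f=0$ in $k[[x_1,\ldots,x_d]]$. But this Taylor expansion is nothing other than the image of $f$ under the completion map $\mathcal{O}_{V,P}\to\hat{\mathcal{O}}_{V,P}\cong k[[x_1,\ldots,x_d]]$, which is injective because smoothness at $P$ makes $\mathcal{O}_{V,P}$ a regular Noetherian domain of dimension $d$ and Krull's intersection theorem then applies. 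Hence $f=0$ in $\mathcal{O}_{V,P}$; absolute irreducibility of $V$ makes $\mathcal{O}(V')$ a domain injecting into $\mathcal{O}_{V,P}$, so $f\equiv 0$ on $V'$, contradicting the choice of $f$. The main obstacle is thus this analytic-to-algebraic transition, which rests on the identity theorem for $k$-analytic functions on an open ball—available uniformly in both the archimedean and non-archimedean settings thanks to the characteristic-zero hypothesis on $k$.
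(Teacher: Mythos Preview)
Your proof is correct, but it takes a different route from the paper's argument. The paper proceeds by contradiction: assuming $V(k)$ is not Zariski-dense, it slices $V$ by generic hyperplanes to produce a smooth curve $C\subset V$ through the given smooth point $x$ such that $C(k)$ is finite; it then picks a non-constant rational map $C\to\PP^1$ unramified at $x$ and applies the one-dimensional inverse function theorem to conclude that $C(k)$ is locally homeomorphic to $\PP^1(k)$ near $x$, hence infinite --- a contradiction.

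Your approach is more direct and more algebraic: you apply the full $d$-dimensional implicit function theorem at $P$ to parametrise an analytic neighbourhood, and then for each proper closed $W\subsetneq V$ you show that a non-zero regular function $f$ vanishing on $W$ cannot pull back to the zero analytic function, by passing through the injection $\mathcal{O}_{V,P}\hookrightarrow\hat{\mathcal{O}}_{V,P}$ guaranteed by Krull's intersection theorem. This avoids the reduction to curves and the choice of an auxiliary map to $\PP^1$, at the cost of invoking a little more commutative algebra. The paper's argument, by contrast, only needs the one-variable inverse function theorem and the evident fact that $\PP^1(k)$ has infinitely many points in any analytic neighbourhood, so it is arguably more elementary; but your method is cleaner in that it treats all proper closed subsets uniformly without first producing a curve. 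Both arguments ultimately rest on the same analytic input (the inverse/implicit function theorem over $k$), applied at different levels of generality.
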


\begin{proof}
Suppose we are given a smooth point $x\in V(k)$, but that $V(k)$ is not 
Zariski-dense in $V$. Then one may find a non-singular curve $C$ in
$V$ which passes through    
$x$ and which only contains finitely many $k$-points. 
There is a non-constant rational map 
$$
C \rightarrow \PP^1,
$$ 
which is unramified at $x$.  
As this map is unramified, the differential at $x$
is an isomorphism, and therefore, by the inverse function theorem
(see Serre \cite[Part II, \S III.9]{serre}, for example) the induced map
$C(k) \rightarrow \PP^1(k)$
is an isomorphism of analytic manifolds in a neighbourhood of $x$ (in the  
topology induced by the topology of $k$). 
Now $\mathbb{P}^1(k)$ has infinitely many $k$-points in any
neighbourhood of any point, so by lifting such points to $C(k)$ 
by the inverse local isomorphism we find infinitely many $k$-points
on $C$, which is a contradiction.
\end{proof}

\section{Weyl differencing}\label{s:4}

In this section we will use the Weyl differencing approach to give bounds for
$S(\al_3,\al_2)$, defined in \eqref{eq:S}. Our overall strategy will
be to assume that $S(\al_3,\al_2)$ is large, and to deduce that
$\al_3$ has a good approximation by a rational number with small denominator.
Using this information we then go on to show that $\al_2$ must also have
a good approximation by a rational number with small denominator.  The
first phase of the argument will apply Weyl's method to
$|S(\al_3,\al_2)|^4$.  In contrast the second phase will use
$|S(\al_3,\al_2)|^2$, and will incorporate an idea related to van der
Corput's method.  The reader will see that in the first stage it is
only the cubic form $C(\x)$ which is relevant, while in the second
stage it is primarily the quadratic form $Q(\x)$ which features.

For the first phase of the argument we write $h=n$ if the form $C$ is
non-singular, and otherwise take $h=h(C)$.  Notice that for
Theorems~\ref{main'} and \ref{main} we must have 
$h\ge 29$, as we henceforth assume.  We now define 
$T_3=T_3(\al_3,\al_2)\in\RR_{>0}\cup\{\infty\}$ by setting
\beql{eq:T3def}
|S(\al_3,\al_2)|=P^nT_3^{-h}.
\eeq
We then call on Lemma 1 of Davenport and Lewis \cite{DL}. We will
require a version with some trivial modifications, as we will
explain.  Let $R>1$ and define
\[n(R):=\#\{(\x,\b{y})\in\ZZ^{2n}:|\x|<R,\,|\b{y}|<R,\;
B_i(\x;\b{y})=0\;\forall i\le n\}.\]
Then if $\ee>0$ is given, the lemma, suitably modified, shows that either 
\beql{eq:nRc}
n(R)>R^{2n}P^{-\ee}T_3^{-4h},
\eeq
or there exists a positive integer
$s\ll R^2$ such that $\|s\al_3\|<P^{-3}R^2$.  In order to obtain the
result in this form we must remove the weight $\omega(\x/P)$ by
partial summation. We must also verify that the proof of the lemma
still applies when the exponents $\theta$ and $\kappa$ for which 
$R=P^{\theta}$ and $T_3=P^{\kappa/h}$ are not necessarily
constant. Davenport and Lewis require that $0<\theta<1$.  However, if
$R\ge P$ then it is always true that $\|s\al_3\|<P^{-3}R^2$ for some
positive integer $s\le R^2$, by Dirichlet's approximation theorem.
Finally the reader will need to verify that the proof still
goes through for sums of $e(\al_3C(\x)+\al_2Q(\x))$, as opposed to the
terms $e(\al\phi(\x))$ (involving a cubic polynomial $\phi(\x)$)
considered by Davenport and Lewis.

We now present two alternative estimates for $n(R)$. Firstly, for any
form $C$, we can use Lemma 3 of Davenport and Lewis \cite{DL}, which
states that $N(R)\ll R^{2n-h}$.  On the other hand,  if $C$
is non-singular we use Lemma 3 of Heath-Brown \cite{hb-10}, which shows 
that there are $O(R^r)$ integer vectors in the region $|\x|< R$ 
such that the solution set 
\[\{\y\in\RR^n:\,B_i(\x;\b{y})=0\;\forall i\le n\}\]
is $(n-r)$-dimensional. The set will therefore contain $O(R^{n-r})$ integer
vectors with $|\y|<R$, and we deduce that $n(R)\ll R^n$, on summing
for $0\le r\le n$.  Thus $n(R)\ll R^{2n-h}$ in this case too,
since we have defined $h=n$ when $C$ is non-singular.

It now follows that, if we choose $R=P^{\ee}T_3^4$, then
\eqref{eq:nRc} must fail, if $P$ is large enough.  We must therefore
have an integer $s\ll R^2$ for which $\|s\al_3\|<P^{-3}R^2$. We may
therefore write
\beql{eq:t3def}
\al_3=\frac{b_3}{s}+\phi_3
\eeq
with $b_3\in\ZZ$ and $s|\phi_3|<P^{-3}R^2$.  Thus
$s(1+P^3|\phi_3|)\ll R^2$ and on replacing $\ee$
by $\ee/2$ we conclude as follows.
\begin{lemma}\label{lem:W1}
Let $\ee>0$ be given, and define $T_3$ by \eqref{eq:T3def}.  Then
there is a positive integer $s$ such that \eqref{eq:t3def} holds 
with $\gcd(s,b_3)=1$ and 
\[s(1+P^3|\phi_3|)\ll P^{\ee}T_3^8.\]
\end{lemma}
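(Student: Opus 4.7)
The plan is a two-step application of Weyl's method to $|S(\al_3,\al_2)|^4$, combined with counting bounds for the variety cut out by the bilinear forms $B_i(\x;\y)$ associated to $C$. The key observation is that two successive differencings reduce the cubic phase $\al_3 C(\x)$ to a linear expression in the $B_i$, while completely eliminating the quadratic phase $\al_2 Q(\x)$, since any polynomial of degree $d$ is annihilated after $d+1$ differencings.

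First I would remove the smooth weight $\omega(\x/P)$ by partial summation and square twice with shift vectors $\h_1$, $\h_2$. Standard manipulations bound $|S(\al_3,\al_2)|^4$ by a weighted count of integer pairs $(\h_1,\h_2) \in [-R,R]^{2n}$, with $R$ of order $P$, for which the quantities $\al_3 B_i(\h_1;\h_2)$ are simultaneously small modulo one. A geometry-of-numbers argument -- the content of Lemma~1 of Davenport--Lewis \cite{DL} -- then yields a dichotomy: either the count $n(R)$ of exact integer solutions to $B_i(\h_1;\h_2)=0$ satisfies \eqref{eq:nRc}, or there is a positive integer $s \ll R^2$ with $\|s\al_3\| < P^{-3}R^2$.

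To exclude the first alternative, I would apply the pointwise bound $n(R) \ll R^{2n-h}$: for general $C$ this is Lemma~3 of \cite{DL} via the definition of $h(C)$, while for non-singular $C$ a stratification argument based on Lemma~3 of \cite{hb-10}, combined with the convention $h=n$, gives $n(R) \ll R^n$. Choosing $R = P^{\ve} T_3^4$ makes the first alternative inconsistent with this bound: the required comparison reduces to an assertion of the form $c \le P^{\ve(h-1)}$, which holds for large $P$ since $h \ge 29$. The second alternative therefore holds, and writing $\al_3 = b_3/s + \phi_3$ with $\gcd(s,b_3)=1$ and $s|\phi_3| < P^{-3}R^2$ yields $s(1+P^3|\phi_3|) \ll R^2 \ll P^{2\ve} T_3^8$; replacing $\ve$ by $\ve/2$ finishes the argument.

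The main technical obstacle is to verify that Lemma~1 of \cite{DL} applies in the present setting. Three modifications are needed: (i) the smooth weight must be handled by partial summation, (ii) the exponents $\theta$, $\kappa$ defined by $R = P^\theta$ and $T_3 = P^{\kappa/h}$ are not constrained to a fixed interval as in the original statement, so one must either re-examine the proof or dispose of the case $R \ge P$ separately via Dirichlet's theorem, and (iii) one must confirm that the presence of $\al_2 Q(\x)$ in the phase does not disrupt the differencing scheme. All three are routine but must be checked carefully.
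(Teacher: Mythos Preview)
Your proposal is correct and follows essentially the same route as the paper: both invoke Lemma~1 of Davenport--Lewis \cite{DL} (with the same three modifications you list), exclude the first alternative via the bound $n(R)\ll R^{2n-h}$ from \cite[Lemma~3]{DL} or \cite[Lemma~3]{hb-10}, choose $R=P^{\ve}T_3^4$, and finish by halving $\ve$. Your explicit reduction of the comparison to $c\le P^{\ve(h-1)}$ is a nice clarification of why the first alternative fails for large $P$.
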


We should emphasise at this point that, as remarked in Section \ref{s:2}, 
we cannot assume that we have $b_3/s=a_3/q$, for the approximation in
\eqref{eq:al2al3}.

We turn now to our second application of Weyl's method.
We shall suppose that \eqref{eq:t3def} holds,
where we think of both $s$ and $\phi_3$ as being small in suitable
senses, and we write
$$
f(\x)=\al_3C(\x)+\al_2Q(\x)
$$
for brevity.  Then
\begin{align*}
S(\al_3,\al_2)&=\sum_{\x\in\ZZ^n}\omega(\x/P)e(f(\x))\\
&=\sum_{\u\bmod{s}}\sum_{\substack{\x\in\ZZ^n\\ \x\equiv\u\bmod{s}}}\omega(\x/P)e(f(\x)).
\end{align*}
 Cauchy's inequality yields
\begin{align*}
|S(\al_3,\al_2)|^2&\le s^n\sum_{\u\bmod{s}}
\left|\sum_{\substack{\x\in\ZZ^n\\ \x\equiv\u\bmod{s}}}\omega(\x/P)e(f(\x))\right|^2\\
&=s^n\sum_{\substack{\x,\y\in\ZZ^n\\ \x\equiv\y\bmod{s}}}\omega(\y/P) \omega(\x/P) 
e(f(\y)-f(\x))\\
&\le s^n\sum_{|\z|<P/s}\left|\sum_{\x\in\ZZ^n}\omega_0(\x/P)
e(f(\x+s\z)-f(\x))\right|,
\end{align*}
where
$\omega_0(\x)=\omega_0(\x,\z)=\omega(\x+sP^{-1}\z)\omega(\x)$. 
Although this remains true even when $s>P$, it is sensible to impose the
condition $s\le P$ for the time being.

Since $C(\x+s\z)-C(\x)$ is automatically
divisible by $s$ we see that
\begin{align*}
e(f(\x&+s\z)-f(\x))\\
&=
e\left(\phi_3\{C(\x+s\z)-C(\x)\}+\al_2\{Q(\x+s\z)-Q(\x)\}\right).
\end{align*}
We now set
\[g(\x)=g(\x,\z)=\phi_3\{C(\x+s\z)-C(\x)\}\]
and conclude that 
\[|S(\al_3,\al_2)|^2\le s^n\sum_{|\z|<P/s}\left|\sum_{\x\in\ZZ^n}\omega_0(\x/P)
e\big(g(\x)+s\al_2\nabla Q(\z).\x\big)\right|.\]
By the Poisson summation formula the inner sum is
\[P^n\sum_{\m\in\ZZ^n}\int_{\RR^n}\omega_0(\t)
e\big(g(P\t)+Ps\al_2\nabla Q(\z).\t-P\m.\t\big)\d\t.\]
The integrals may be estimated by the multidimensional ``first
derivative bound'', see Heath-Brown \cite[Lemma 10]{HB-crelle}, for
example. One has
\[|\nabla\big(g(P\t)+Ps\al_2\nabla Q(\z).\t-P\m.\t\big)|\ge\lambda\]
on ${\rm supp}(\omega_0)$, with
\[\lambda=P|s\al_2\nabla Q(\z)-\m|+O(P^3|\phi_3|).\]
The second and third order derivatives are $O(P^3|\phi_3|)$, and
all higher order derivatives vanish.  It therefore follows from
\cite[Lemma 10]{HB-crelle} that
\[\int_{\RR^n}\omega_0(\t)e\big(g(P\t)+Ps\al_2\nabla Q(\z).\t-P\m.\t\big)\d\t
\ll_A (P|s\al_2\nabla Q(\z)-\m|)^{-A}\]
for any fixed $A>0$, whenever $P|s\al_2\nabla Q(\z)-\m|\gg 
P^3|\phi_3|$. In particular, if $\ee\in (0,1)$ is given, and
\beql{eq:nablabound} 
\|s\al_2\nabla Q(\z)\|\ge P^{-1+\ee}(1+P^3|\phi_3|)
\eeq
then
\begin{align*}
\sum_{\x\in\ZZ^n}\omega_0(\x/P)
&e\big(g(\x)+s\al_2\nabla Q(\z).\x\big) \\
&\ll P^n\sum_{\m\in\ZZ^n}(P|s\al_2\nabla Q(\z)-\m|)^{-A}\\
&\ll 1
\end{align*}
provided that we choose $P$ sufficiently large and take $A>(n+1)/\ee$.
Of course if
\eqref{eq:nablabound} fails then we may estimate the sum trivially as
$O(P^n)$.  We therefore deduce that
\[|S(\al_3,\al_2)|^2\ll s^n\# S_1+s^nP^n\# S_2\] 
with
\[S_1=\{\z\in\ZZ^n:\,|\z|<P/s\}\]
and
\[S_2=\{\z\in\ZZ^n:\,|\z|<P/s,\,\|s\al_2\nabla Q(\z)\|\le 
P^{-1+\ee}(1+P^3|\phi_3|)\}.\]
We may omit the term $\#S_1$ from the above estimate since it is at most
$O(P^n)$, while $S_2$ contains at least the element $\z=\mathbf{0}$.
If $|\z|<P/s$ one has $|\nabla Q(\z)|\le cP/s$, for some constant
$c=c(Q)$.  
We now recall the notation 
\[\rho=\rank(Q)\]
introduced earlier.
Thus the values $\nabla Q(\z)$ are restricted to a
vector space of dimension $\rho$.  Given $\w$, the equation $\w=\nabla 
Q(\z)$ has $O((P/s)^{n-\rho})$ integral solutions $\z$ with $|\z|<P/s$, 
and we conclude that 
$$
\#S_2\ll  (P/s)^{n-\rho}\cN^{\rho},
$$
where
\[\cN=\#\{w\in\ZZ:\,|w|\le cP/s,\,\|s\al_2w\|\le 
P^{-1+\ee}(1+P^3|\phi_3|)\}.\]
We therefore have
\[|S(\al_3,\al_2)|^2\ll P^{2n-\rho}s^{\rho}\cN^{\rho}.\]
We now define $T_2=T_2(\al_3,\al_2)$ by setting
\beql{eq:T2def}
|S(\al_3,\al_2)|=P^nT_2^{-\rho},
\eeq
whence
\beql{eq:T2B}
T_2^2\gg P/(s\cN).
\eeq

Naturally our next task is to estimate $\cN$. Generally, if
\[\mathcal{W}=\{w\in\ZZ:\,|w|\le W,\,\|\mu w\|\le \xi\},\]
then $\# \mathcal{W}$ is at most the number of points of the lattice
\[\Lambda=\{\big(W^{-1}u,\xi^{-1}(\mu u-v)\big):\, (u,v)\in\ZZ^2\}\]
lying in the unit square. The determinant of the lattice is
$(W\xi)^{-1}$, and so the number of points is $O(1+W\xi+\sigma^{-1})$,
where $\sigma$ is the first successive minimum of the lattice. From the
definition of $\sigma$ we see that there will be a non-zero point
$(u,v)\in\ZZ^2$ such that $|u|\le\sigma W$ and $|\mu u-v|\le\sigma\xi$.

Thus in our situation we find that
\[\cN\ll 1+P^{\ee}s^{-1}(1+P^3|\phi_3|)+\sigma^{-1}\]
so that either $\cN\ll 1+P^{\ee}s^{-1}(1+P^3|\phi_3|)$ or
$\sigma\le\cN^{-1}$.  In the former case \eqref{eq:T2B} yields
\[T_2^2\gg
\min\left(\frac{P}{s}\,,\,\frac{P^{1-\ee}}{1+P^3|\phi_3|}\right)
\gg\frac{P^{1-\ee}}{s+P^3|\phi_3|}.\]
In the latter case \eqref{eq:T2B} shows that there is a non-zero point $(u,v)$
with 
\[|u|\le cP/(s\cN)\ll T_2^2\] 
and 
\[|s\al_2 u-v|\le P^{-1+\ee}(1+P^3|\phi_3|)/\cN\ll
P^{-2+\ee}s(1+P^3|\phi_3|)T_2^2.\]
If there is any such point $(u,v)$ for which $u=0$, then $v\not=0$
whence we must have $P^{-2+\ee}s(1+P^3|\phi_3|)T_2^2\gg 1$.  But in
that case we may take $u=1$ and we will automatically have $\|s\al_2u\|
\ll P^{-2+\ee}s(1+P^3|\phi_3|)T_2^2$.  Thus we can assume with no
loss of generality that there is a solution in which $u\not=0$.
We now summarise our findings as follows.
\begin{lemma}\label{lem:Weyl3}
Define
\[|S(\al_3,\al_2)|=P^nT_2^{-\rho}\]
and suppose that \eqref{eq:t3def} holds with $\gcd(s,b_3)=1$.
Then for any fixed $\ee>0$ one of the following must happen:
\begin{itemize}
\item[(i)] there is a positive integer $u\ll T_2^2$ such that 
\[\|su\al_2\|\ll P^{-2+\ee}s(1+P^3|\phi_3|)T_2^2; \]
or
\item[(ii)] we have
\[T_2^2\gg \frac{P^{1-\ee}}{s+P^3|\phi_3|}.\]
\end{itemize}
\end{lemma}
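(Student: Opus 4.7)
The plan is to extract alternatives (i) and (ii) via a Cauchy--Schwarz plus Poisson summation argument in the spirit of van der Corput's method, but with a differencing step of size $s$. Write $f(\x)=\al_3 C(\x)+\al_2 Q(\x)$. First I partition $S(\al_3,\al_2)$ by residue classes modulo $s$ and apply Cauchy's inequality to pull out a factor of $s^n$, then expand the resulting square to obtain a double sum over pairs $\x,\y$ with $\x\equiv\y\bmod{s}$. Setting $\y=\x+s\z$ with $|\z|<P/s$, the key cancellation is that $C(\x+s\z)-C(\x)$ is divisible by $s$, so under \eqref{eq:t3def} the rational part $b_3/s$ of $\al_3$ contributes a null phase and only $\phi_3$ survives in the cubic piece. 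Meanwhile $\al_2\{Q(\x+s\z)-Q(\x)\}$ is linear in $\x$ with leading phase $s\al_2\nabla Q(\z)\cdot\x$, so this step converts $|S|^2$ into $s^n$ times a sum over $\z$ of an exponential sum in $\x$ that is almost linear.

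Second, I would apply Poisson summation in $\x$. The multidimensional first derivative bound forces the contribution to be negligible unless
\begin{equation*}
\|s\al_2\nabla Q(\z)\|\ll P^{-1+\ee}(1+P^3|\phi_3|),
\end{equation*}
where the factor $1+P^3|\phi_3|$ reflects the second derivatives coming from the surviving $\phi_3 C$ term. Combining gives
\begin{equation*}
|S(\al_3,\al_2)|^2\ll P^n s^n\,\#S_2,
\end{equation*}
with $S_2$ the set of $\z\in\ZZ^n$, $|\z|<P/s$, satisfying the gradient condition. The definition \eqref{eq:T2def} of $T_2$ then translates into a lower bound on $\#S_2$.

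Third, I exploit that $\nabla Q$ factors through a $\rho$-dimensional subspace: the fibre of $\nabla Q$ over a point $\w$ contains $O((P/s)^{n-\rho})$ integer points with $|\z|<P/s$. Thus counting $S_2$ reduces to counting scalars $w\in\ZZ$ with $|w|\ll P/s$ and $\|s\al_2 w\|\ll P^{-1+\ee}(1+P^3|\phi_3|)$, raised to the power $\rho$. This one-dimensional count $\cN$ is exactly the number of lattice points of a rank-$2$ lattice of determinant $(W\xi)^{-1}$ in the unit square, where $W=P/s$ and $\xi=P^{-1+\ee}(1+P^3|\phi_3|)$. A standard successive-minima estimate yields
\begin{equation*}
\cN\ll 1+P^{\ee}s^{-1}(1+P^3|\phi_3|)+\sigma^{-1},
\end{equation*}
where $\sigma$ is the first successive minimum. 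Inserting this into the lower bound $T_2^2\gg P/(s\cN)$ produces the dichotomy: when $\sigma^{-1}$ is not the dominant term, rearranging gives alternative (ii); when $\sigma^{-1}$ dominates, the short lattice vector realising $\sigma$ gives a non-zero $(u,v)\in\ZZ^2$ delivering the bounds in alternative (i).

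The main technical nuisance will be delivering alternative (i) \emph{with $u$ a positive integer}, since the lattice argument only produces a non-zero pair $(u,v)$. When the short vector has $u=0$ one must have $v\neq 0$, and the inequality $|v|\le\sigma\xi\le\cN^{-1}\xi$ then forces $P^{-2+\ee}s(1+P^3|\phi_3|)T_2^2\gg 1$, so that the target bound $\|su\al_2\|\ll P^{-2+\ee}s(1+P^3|\phi_3|)T_2^2$ becomes vacuous and can be realised trivially with $u=1$. Finally one should separately dispatch the case $s>P$ excluded by the differencing step, using the trivial estimate $|S|\ll P^n$, which makes alternative (ii) automatic there.
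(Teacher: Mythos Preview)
Your proposal is correct and follows essentially the same route as the paper: the same mod-$s$ splitting and Cauchy step, the same Poisson/first-derivative reduction to the gradient condition on $\z$, the same passage via $\rank(Q)=\rho$ to a one-dimensional count $\cN$, and the same successive-minima dichotomy, including the $u=0$ repair and the trivial disposal of $s>P$. The only cosmetic difference is that the paper first records $|S|^2\ll s^n\#S_1+s^nP^n\#S_2$ and then drops the $\#S_1$ term using $\mathbf{0}\in S_2$, whereas you pass directly to $|S|^2\ll s^nP^n\#S_2$.
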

Note that we assumed that $s\le P$ during the proof.  However the result
is clearly trivial when $s\ge P$ since we then have $T_2^2\gg 1\gg P/s$.

\section{Minor arc contribution: the Weyl bound}\label{s:4a}

In this section we will see what can be said about the size of the minor
arc integral \eqref{eq:minor} on the basis of Lemmas \ref{lem:W1} and
\ref{lem:Weyl3}. For convenience we write
\[I(\minor):=\iint_{\minor} S(\al_3,\al_2) \d\al_3\d \al_2 .\]
We begin by considering values $\al_3$ for which case (i) of
Lemma \ref{lem:Weyl3} holds.  

Our first move is to show that on the minor arcs $T_3$ (and 
hence also $T_2$) cannot be too small. Lemma \ref{lem:W1} and case (i)
of Lemma \ref{lem:Weyl3} produce positive integers $s$ and $u$ 
such that
\[su\ll P^{\ee}T_3^8T_2^2.\] 
Moreover there will be integers $b_3,b_2$ for which
\[|su\al_3-ub_3|=su|\phi_3|\ll 
uP^{-3+\ee}T_3^8\ll P^{-3+\ee}T_3^8T_2^2\]
and
\[|su\al_2-b_2|=\|su\al_2\|\ll P^{-2+\ee}s(1+P^3|\phi_3|)T_2^2\ll
P^{-2+2\ee}T_3^8T_2^2.\]
It follows that we would have $su\le P^{\delta}$ and
\begin{align*}
\left|\al_3-\frac{b_3u}{su}\right| 
&\le 
|su\al_3-ub_3|\le P^{-3+\delta},\\
\left|\al_2-\frac{b_2}{su}\right| 
&\le|su\al_2-b_2|\le  
P^{-2+\delta},
\end{align*} 
if $T_3^8T_2^2\le P^{\delta-3\ee}$ say, with $P$ sufficiently
large. Thus if $(\al_3,\al_2)\in\minor$ we must have $T_3^8T_2^2\ge
P^{\delta-3\ee}$. It is clear from \eqref{eq:T3def} and
\eqref{eq:T2def} that
\beql{eq:T2T3}
T_2=T_3^{h/\rho}.
\eeq
We therefore deduce that
\beql{eq:T3lb}
T_3\ge P^{\delta\rho/(16\rho+4h)}
\eeq
provided that $\ee\le\delta/6$, as we henceforth assume.

In estimating the minor arc integral $I(\minor)$, it will be convenient to 
 consider the contribution $I_{t_3}(\minor)$, say, from all pairs
 $\al_3,\al_2$ for which $T_3$ lies in a dyadic range 
$$
t_3<T_3\le 2t_3.
$$ 
In view of \eqref{eq:T3lb} we may assume that
$t_3\ge P^{\delta\rho/(16\rho+4h)}$. Moreover, \eqref{eq:T2T3} implies that 
$$
t_3^{h/\rho}<T_2\leq (2t_3)^{h/\rho}.
$$
We put  $t_2=t_3^{h/\rho}.$

We proceed to consider the contribution to $I_{t_3}(\minor)$ 
from all  pairs $\al_3,\al_2$  for which the first alternative of Lemma
\ref{lem:Weyl3} holds. 
We begin by considering the
measure of the available $\al_2\in(0,1]$. 
For each positive integer
$u\ll t_2^2$ there will be an integer $v\ll su$ such that
\[|su\al_2-v|\ll P^{-2+\ee}s(1+P^3|\phi_3|)t_2^2.\]
Thus the total measure for the values of $\al_2$ will be
\[
\ll\sum_{u\ll t_2^2}\sum_{v\ll su}(su)^{-1}P^{-2+\ee}s(1+P^3|\phi_3|)t_2^2
\ll P^{-2+\ee}s(1+P^3|\phi_3|)t_2^4.\]
According to Lemma \ref{lem:W1} we will have $s(1+P^3|\phi_3|)\ll
P^{\ee}t_3^8$ so that the above is $O(P^{-2+2\ee}t_3^8t_2^4)$. We may
calculate the available measure for $\al_3$ in much the same way,
given that $s\ll P^{\ee}t_3^8$ and $|\phi_3|\ll
P^{-3+\ee}s^{-1}t_3^8$, by Lemma \ref{lem:W1}.  This yields
\begin{align}
\meas\{\al_3:\, s(1+P^3|\phi_3|)\ll P^{\ee}t_3^8\}
&\ll  \sum_{s\ll P^{\ee}t_3^8}\;\sum_{v\ll s}P^{-3+\ee}s^{-1}t_3^8\nonumber\\
&\ll P^{\ee}t_3^8.P^{-3+\ee}t_3^8\nonumber\\
&= P^{-3+2\ee}t_3^{16}.
\label{eq:al3meas}
\end{align}

Returning to our estimation of the  contribution to $I_{t_3}(\minor)$
from the first case of Lemma
\ref{lem:Weyl3}, 
we obtain the overall contribution 
\[
\ll P^nt_3^{-h}.P^{-2+2\ee}t_3^8t_2^4.P^{-3+2\ee}t_3^{16}\ll
P^{n-5+4\ee}t_3^{-h+4h/\rho+24}.\]
If $(h-24)(\rho-4) >96$ then $h-4h/\rho-24\ge 1/\rho$.  Thus
if we sum over all relevant dyadic ranges for $t_3\ge P^{\delta\rho/(16\rho+4h)}$,
we will obtain an overall contribution 
\[\ll P^{n-5+4\ee-\delta/(16\rho+4h)},\]
which is satisfactory if we choose $\ee$ sufficiently small.
We record our conclusions as follows.

\begin{lemma}\label{lem:Weyl4}
The contribution to $I(\minor)$ arising
from pairs $\al_3,\al_2$ for which the first alternative of Lemma
\ref{lem:Weyl3} holds, is $o(P^{n-5})$ providing that
\[(h-24)(\rho-4) >96.\]
\end{lemma}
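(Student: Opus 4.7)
The plan is to dyadically decompose the minor arcs according to the size of $T_3$ and estimate each piece. On the dyadic shell $t_3 < T_3 \le 2t_3$, the definition \eqref{eq:T3def} supplies the pointwise bound $|S(\al_3,\al_2)| \ll P^n t_3^{-h}$, while the identity \eqref{eq:T2T3} fixes $T_2 \asymp t_2 := t_3^{h/\rho}$. Before bounding anything, I would first observe that combining Lemma~\ref{lem:W1} with case (i) of Lemma~\ref{lem:Weyl3} produces simultaneous rational approximations to $\al_3$ and $\al_2$ with common denominator $su \ll P^\ee t_3^8 t_2^2$, and with errors controlled by $P^{-3+\ee} t_3^8 t_2^2$ and $P^{-2+2\ee} t_3^8 t_2^2$ respectively. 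Requiring that these be better than the major-arc tolerances would force $(\al_3,\al_2) \in \major$; so on $\minor$ we obtain the lower bound $T_3 \ge P^{\delta\rho/(16\rho+4h)}$ of \eqref{eq:T3lb} (assuming $\ee \le \delta/6$), which reduces the decomposition to $O(\log P)$ relevant shells.

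The next step is to bound the measure of the set of admissible $(\al_3,\al_2)$ in each such shell. For $\al_2$, for each positive integer $u\ll t_2^2$ and each integer $v\ll su$, case (i) of Lemma~\ref{lem:Weyl3} confines $su\al_2$ to an interval of length $\ll P^{-2+\ee}s(1+P^3|\phi_3|) t_2^2$; using the control $s(1+P^3|\phi_3|)\ll P^\ee t_3^8$ from Lemma~\ref{lem:W1} and summing over $u$ and $v$ yields the total measure $\ll P^{-2+2\ee} t_3^8 t_2^4$. For $\al_3$, Lemma~\ref{lem:W1} further gives $s\ll P^\ee t_3^8$ and $|\phi_3|\ll P^{-3+\ee}s^{-1}t_3^8$; summing over $s$ and over $b_3 \ll s$ then produces the measure estimate $\ll P^{-3+2\ee} t_3^{16}$ already recorded in \eqref{eq:al3meas}.

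Multiplying the pointwise bound by the two measure estimates, the contribution of the dyadic shell is $\ll P^{n-5+4\ee} t_3^{\,-h+4h/\rho+24}$. The hypothesis $(h-24)(\rho-4)>96$ expands to $h\rho - 4h - 24\rho > 0$; combined with the integrality of $h$ and $\rho$ this upgrades to $h\rho - 4h - 24\rho \ge 1$, equivalently $h - 4h/\rho - 24 \ge 1/\rho > 0$. Thus the exponent of $t_3$ is at most $-1/\rho$, and summing the resulting geometric series over the $O(\log P)$ valid ranges $t_3 \ge P^{\delta\rho/(16\rho+4h)}$ gives
\[
\ll P^{n-5 + 4\ee - \delta/(16\rho+4h)} \log P \;=\; o(P^{n-5}),
\]
provided $\ee$ is taken sufficiently small relative to $\delta$. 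The only mildly delicate point is the initial bookkeeping that merges Lemmas~\ref{lem:W1} and~\ref{lem:Weyl3} into simultaneous rational approximations with the compound denominator $su$, and the tracking of the accumulating $P^\ee$ factors; everything past that reduces to the geometric-series calculation above.
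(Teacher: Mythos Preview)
Your proposal is correct and follows essentially the same route as the paper: the dyadic decomposition in $T_3$, the lower bound \eqref{eq:T3lb} on $T_3$ coming from the simultaneous approximation with denominator $su$, the separate measure estimates for $\al_2$ and $\al_3$, and the final geometric-series summation all match the paper's argument. The only cosmetic difference is that the geometric series in $t_3^{-1/\rho}$ is already dominated by its first term, so the extra $\log P$ factor you carry is unnecessary (though harmless).
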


Turning to the contribution to $I_{t_3}(\minor)$ from 
those pairs $\al_3,\al_2$  for which the second alternative of
Lemma~\ref{lem:Weyl3} holds,  we first consider
the situation when $t_3\ge P^{3/19}$. 
 According to \eqref{eq:al3meas} the available set of values for
$\al_3$ has measure $O(P^{-3+2\ee}t_3^{16})$, but there is no
restriction on the values of $\al_2$.  It follows that the
contribution to the minor arc integral is
\beql{eq:c2b}
\ll P^nt_3^{-h}.P^{-3+2\ee}t_3^{16}.
\eeq
We proceed to sum over dyadic values $t_3\ge P^{3/19}$ to obtain a total
\[\ll P^{n-3+2\ee-3(h-16)/19}\le P^{n-5+2\ee-1/19},\]
provided that $h\ge 29$.  This gives us the following result.
\begin{lemma}\label{lem:Weyl5}
Suppose that $h\ge 29$.  Then the contribution to $I(\minor)$
arising from pairs $\al_3,\al_2$ for which
the second case of Lemma \ref{lem:Weyl3} holds, and $T_3\ge P^{3/19}$,
is $o(P^{n-5})$.
\end{lemma}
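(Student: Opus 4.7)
The plan is to partition the contributing region dyadically in $T_3$ and estimate each band separately. Within a band $t_3 < T_3 \le 2t_3$ with $t_3 \ge P^{3/19}$ I would combine three ingredients: the pointwise Weyl bound $|S(\al_3,\al_2)| \le P^n t_3^{-h}$ coming from the definition \eqref{eq:T3def}; the measure estimate \eqref{eq:al3meas} for the admissible set of $\al_3$; and the trivial bound $1$ for the $\al_2$-measure, since alternative (ii) of Lemma \ref{lem:Weyl3} carries no information whatsoever about $\al_2$.

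The $\al_3$-measure follows directly from Lemma \ref{lem:W1}: the constraint $s(1 + P^3|\phi_3|) \ll P^{\ee} t_3^8$ produces $\ll P^{\ee} t_3^8$ denominators $s$, at most $s$ coprime numerators $b_3 \bmod s$ for each, and an interval of radius $\ll P^{-3+\ee} s^{-1} t_3^8$ about each rational $b_3/s$. A union bound then yields the total $\al_3$-measure $\ll P^{-3+2\ee} t_3^{16}$ as in \eqref{eq:al3meas}. Multiplying the three bounds together, the contribution from one dyadic band is
\[
\ll P^n t_3^{-h} \cdot P^{-3+2\ee} t_3^{16} \cdot 1 = P^{n-3+2\ee}\, t_3^{16-h}.
\]

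Finally I would sum over dyadic values $t_3 \ge P^{3/19}$. Because $h \ge 29 > 16$ the exponent $16 - h$ is negative, so the geometric series is dominated by its first term and one obtains
\[
\ll P^{n-3+2\ee} \cdot (P^{3/19})^{16-h} = P^{n-3 - 3(h-16)/19 + 2\ee}.
\]
At $h = 29$ one has $3(h-16)/19 = 39/19 > 2$, giving $\ll P^{n-5 - 1/19 + 2\ee}$, which is $o(P^{n-5})$ once $\ee$ is taken small enough.

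The main thing to watch is that the threshold $t_3 \ge P^{3/19}$ is exactly where the $\al_3$-measure savings from \eqref{eq:al3meas} start to beat the loss of having no $\al_2$-control, and where the inequality $h \ge 29$ becomes sharp. One might also worry about overlaps between the intervals $|\al_3 - b_3/s| \ll P^{-3+\ee} s^{-1} t_3^8$ attached to distinct pairs $(s, b_3)$, but since only an upper bound on the total measure is needed, the crude union bound suffices and no Farey-type spacing argument is required.
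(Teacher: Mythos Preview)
Your proposal is correct and follows essentially the same argument as the paper: dyadic decomposition in $T_3$, the pointwise bound $|S|\le P^n t_3^{-h}$, the $\al_3$-measure bound $\ll P^{-3+2\ee}t_3^{16}$ from \eqref{eq:al3meas}, the trivial $\al_2$-measure, and the geometric sum over $t_3\ge P^{3/19}$ to reach $\ll P^{n-5+2\ee-1/19}$ when $h\ge 29$. The additional remarks you make (the re-derivation of \eqref{eq:al3meas} and the comment on overlaps) are sound but not needed beyond what the paper already records.
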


The simplest way to handle the remaining case is to combine the
inequalities $s(1+P^3|\phi_3|)\ll P^{\ee}T_3^8$ and $T_2^2\gg
P^{1-\ee}/(s+P^3|\phi_3|)$ from Lemma \ref{lem:W1} and part (ii) of Lemma
\ref{lem:Weyl3}, respectively, to deduce that
\[T_3^8T_2^2\gg P^{1-2\ee}\frac{s+sP^3|\phi_3|}{s+P^3|\phi_3|}\ge
P^{1-2\ee}.\]
Then \eqref{eq:T2T3} implies that $T_3^{8+2h/\rho}\gg P^{1-2\ee}$.  We
therefore see from the bound \eqref{eq:c2b} that the total contribution
to the minor arc integral is $O(P^{n-\psi})$ with
\[\psi=3-2\ee+(h-16)\frac{1-2\ee}{8+2h/\rho}.\]
By taking $\ee$ sufficiently small we can make $\psi>5$ provided that
\[h-16>2(8+2h/\rho).  \]
This gives us the following lemma,
which is exactly what we need for Theorem \ref{main'}.

\begin{lemma}\label{lem:Weyl6}
Suppose that $(h-32)(\rho-4)>128$.  Then  
\[I(\minor)=o(P^{n-5}).\]
\end{lemma}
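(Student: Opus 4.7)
The plan is to partition the minor arcs into three disjoint regions and dispatch two of them using the previous lemmas, leaving one region where a direct combination of Lemma \ref{lem:W1} and Lemma \ref{lem:Weyl3}(ii) does all the work. First I would verify that the hypothesis $(h-32)(\rho-4)>128$ forces $h\ge 33$ and $\rho\ge 5$, and hence
\[
(h-24)(\rho-4) \;=\; (h-32)(\rho-4) + 8(\rho-4) \;>\; 128 + 8 \;=\; 136 \;>\; 96
\]
together with $h\ge 29$. Consequently Lemma \ref{lem:Weyl4} controls the contribution from all $(\alpha_3,\alpha_2)$ where case (i) of Lemma \ref{lem:Weyl3} holds, and Lemma \ref{lem:Weyl5} controls those where case (ii) holds with $T_3 \ge P^{3/19}$.

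The remaining region is where case (ii) of Lemma \ref{lem:Weyl3} holds and $T_3 < P^{3/19}$. Here I would combine the upper bound $s(1+P^3|\phi_3|)\ll P^{\ee}T_3^8$ from Lemma \ref{lem:W1} with the lower bound $T_2^2 \gg P^{1-\ee}/(s+P^3|\phi_3|)$ from Lemma \ref{lem:Weyl3}(ii). Since $s\ge 1$, one has $s(1+P^3|\phi_3|) = s + sP^3|\phi_3| \ge s + P^3|\phi_3|$, and multiplying the two inequalities therefore yields $T_3^8 T_2^2 \gg P^{1-2\ee}$. Substituting the relation $T_2 = T_3^{h/\rho}$ from \eqref{eq:T2T3} rearranges this to the lower bound $T_3 \gg P^{(1-2\ee)/(8+2h/\rho)}$.

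Next I would feed this into the dyadic bound \eqref{eq:c2b}, which estimates the contribution from each dyadic range $t_3 < T_3 \le 2t_3$ by $\ll P^{n-3+2\ee} t_3^{16-h}$. Since $h\ge 33$ the exponent $16-h$ is negative, so summing over dyadic ranges above the lower bound is dominated by the smallest admissible $t_3$, yielding a total contribution $\ll P^{n-\psi}$ with
\[
\psi \;=\; 3-2\ee + (h-16)\,\frac{1-2\ee}{8+2h/\rho}.
\]
Choosing $\ee$ sufficiently small, the inequality $\psi>5$ becomes $(h-16)/(8+2h/\rho)>2$, and clearing denominators gives $\rho(h-16)>16\rho+4h$, which rearranges precisely to $(h-32)(\rho-4)>128$. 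This is the hypothesis, so the remaining region contributes $o(P^{n-5})$ and the lemma follows.

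The main obstacle is purely bookkeeping: one must confirm that the three regions genuinely partition the minor arcs, that the trivial inequality $s(1+P^3|\phi_3|) \ge s+P^3|\phi_3|$ really does eliminate the $s$ and $\phi_3$ dependence in the combined estimate, and that the final algebraic manipulation yields the stated hypothesis. No new analytic input is required beyond the lemmas already in hand.
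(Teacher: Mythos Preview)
Your proposal is correct and follows essentially the same route as the paper: invoke Lemmas \ref{lem:Weyl4} and \ref{lem:Weyl5} for two of the three regions, then for the remaining region (case (ii) with $T_3<P^{3/19}$) multiply the bound $s(1+P^3|\phi_3|)\ll P^{\ee}T_3^8$ against $T_2^2\gg P^{1-\ee}/(s+P^3|\phi_3|)$ to obtain $T_3^{8+2h/\rho}\gg P^{1-2\ee}$, feed this into \eqref{eq:c2b}, and check that $\psi>5$ is equivalent to $(h-32)(\rho-4)>128$. Your explicit verification that the hypothesis forces $h\ge 33$, $\rho\ge 5$, and hence the conditions of Lemmas \ref{lem:Weyl4} and \ref{lem:Weyl5}, is a detail the paper leaves implicit but is entirely correct.
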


An alternative way to deal with the case $T_3\le P^{3/19}$ is to use
an analysis based on the Poisson summation formula.  We will carry
this out in Section \ref{s:7}.  
It is an essential feature of the method that one uses
simultaneous rational approximations $a_3/q, a_2/q$ to $\al_3$ and
$\al_2$, as given by \eqref{eq:al2al3}.

We will want to know whether the approximation $a_3/q$ corresponds to the
approximation $b_3/s$ given by \eqref{eq:t3def}.  However if
$b_3/s\not=a_3/q$ then
\[\frac{1}{sq}\le\left|\frac{a_3}{q}-\frac{b_3}{s}\right|\le
\left|\al_3-\frac{a_3}{q}\right|+\left|\al_3-\frac{b_3}{s}\right|\le
\frac{1}{qQ_3}+|\phi_3|.\]
It would then follow from Lemma \ref{lem:W1} and \eqref{eq:Q2Q3} that
\begin{align*}
1&\le s/Q_3+sq|\phi_3|\\
&\ll P^{\ee}T_3^8(Q_3^{-1}+P^{-3}Q_3Q_2)\\
&\ll P^{24/19+\ee}.P^{-4/3}\\
&\le P^{-4/57+\ee},
\end{align*}
providing that $T_3\le P^{3/19}$.
This will produce a contradiction if $\ee$ is small enough and $P$ is
large enough, thereby proving that $a_3/q=b_3/s$.

We record this conclusion as follows.
\begin{lemma}\label{lem:sq}
Suppose that $h\ge 29$ and $T_3\le P^{3/19}$. Then we will have 
$a_3/q=b_3/s$ if $P$ is large enough.
\end{lemma}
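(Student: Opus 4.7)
The plan is to argue by contradiction: assume $a_3/q \ne b_3/s$, where $a_3/q$ comes from the Dirichlet-type approximation \eqref{eq:al2al3} and $b_3/s$ comes from the Weyl-differencing output of Lemma \ref{lem:W1}, and derive a numerical inequality that fails once $T_3 \le P^{3/19}$. The starting observation is that any two distinct rationals with denominators $q$ and $s$ differ by at least $1/(sq)$, so the triangle inequality gives
\[
\frac{1}{sq}\le\left|\frac{a_3}{q}-\frac{b_3}{s}\right|\le\left|\al_3-\frac{a_3}{q}\right|+\left|\al_3-\frac{b_3}{s}\right|\le\frac{1}{qQ_3}+|\phi_3|,
\]
using \eqref{eq:al2al3} and \eqref{eq:t3def}. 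Multiplying through by $sq$ yields $1 \le s/Q_3 + sq|\phi_3|$, which is the inequality we want to contradict.

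Next I would substitute in the sizes of the various quantities. From Lemma \ref{lem:W1}, $s\ll P^{\ee}T_3^8$ and $sP^3|\phi_3|\ll P^{\ee}T_3^8$, so both $s/Q_3$ and $sP^3|\phi_3|$ are controlled by $P^{\ee}T_3^8$ times an appropriate power of $P$. Using \eqref{eq:Q2Q3} to bound $q\le Q_3Q_2\ll P^{5/3}$, one obtains
\[
s/Q_3 + sq|\phi_3|\ll P^{\ee}T_3^8\bigl(Q_3^{-1}+P^{-3}Q_3Q_2\bigr)\ll P^{\ee}T_3^8\cdot P^{-4/3}.
\]

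The final step is to insert the hypothesis $T_3\le P^{3/19}$ to get $T_3^8\le P^{24/19}$, which yields $1\ll P^{\ee+24/19-4/3}=P^{\ee-4/57}$. For $\ee$ sufficiently small (smaller than $4/57$) and $P$ large, this is absurd, so we must have $a_3/q = b_3/s$ as claimed.

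There is no real obstacle here; all the ingredients are already in place and the proof is a short calculation chaining together the Dirichlet-type inequalities of \eqref{eq:al2al3} with the Weyl output of Lemma \ref{lem:W1}. The only mild subtlety is making sure one uses the full strength of Lemma \ref{lem:W1} (namely $s(1+P^3|\phi_3|)\ll P^{\ee}T_3^8$) to bound both $s$ and $s|\phi_3|$ simultaneously, and checking the arithmetic $24/19-4/3=-4/57<0$ so that the hypothesis $T_3\le P^{3/19}$ is indeed strong enough to close the argument, which uses that $h\ge 29$ only indirectly through being consistent with the earlier lemmas.
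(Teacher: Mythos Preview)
Your proof is correct and follows the paper's argument essentially line for line: the same contradiction setup, the same triangle inequality yielding $1\le s/Q_3+sq|\phi_3|$, the same use of Lemma~\ref{lem:W1} and the bounds $Q_3\asymp P^{4/3}$, $q\le Q_3Q_2\ll P^{5/3}$, and the same final arithmetic $24/19-4/3=-4/57$. There is nothing to add.
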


\section{Poisson summation}\label{s:7}

In this section we suppose that $X\subset \PP^{n-1}$ is
non-singular. By Corollary \ref{c:3.2} 
we may assume that the cubic form $C$  is 
non-singular and that $Q$ 
takes the shape
\beql{eq:didef}
Q(\x)=\sum_{i=1}^nd_ix_i^2,
\eeq
with $d_1,\ldots,d_n\in \ZZ$ such that 
$d_1\cdots d_{n-1}\not=0$.
Thus $Q$ has rank at least $n-1$. 
We are now ready to begin our  analysis of the exponential sums 
$$
S(\al_3,\al_2)=\sum_{\x\in\ZZ^n}\omega(\x/P) e\left(\al_3 C(\x)+\al_2 Q(\x)\right),
$$
for $\alpha_3,\alpha_2\in \RR$, 
based on an application of Poisson summation. 

We will assume throughout this section that
$\al_3=a_3/q+\theta_3$ and 
$\al_2=a_2/q+\theta_2$, as 
 in Section \ref{s:2}.  Thus $\a=(a_3,a_2)\in \ZZ^2$ and $q\in \ZZ$   satisfy
\begin{equation}
  \label{eq:aiq}
1\leq a_3,a_2\leq q\leq Q_3Q_2, \quad \gcd(q,\a)=1,
\end{equation}
and $\bth=(\theta_3,\theta_2)\in\RR^2$ satisfies 
\begin{equation}\label{eq:theta}
  |\theta_i|\leq q^{-1}Q_i^{-1}, \quad (i=3,2).
\end{equation}
We recall that $Q_3,Q_2$ are positive integers
given by \eqref{eq:Q2Q3}. 
Our first step involves introducing complete
exponential sums modulo $q$.  The following 
result is standard.

\begin{lemma}\label{lem:p_sum}
We have
$$
  S(\alpha_3,\alpha_2)
  =\frac{P^n}{q^{n}}\sum_{\m\in\ZZ^n}S(\a,q;\m)I(\theta_3 P^3,\theta_2
  P^2;q^{-1}P\m), 
$$
where
\begin{align}
  \label{eq:Sq}
S(\a,q;\m)&:=\sum_{\y\bmod{q}}e_q(a_3C(\y)+a_2Q(\y)+\m.\y),\\
  \label{eq:Iq}
  I(\bga;\z)&:=\int_{\RR^n} \omega(\x)e(\gamma_3 C(\x)+\gamma_2 Q(\x)-\z.\x)\d\x.
\end{align}
\end{lemma}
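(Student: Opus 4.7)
My plan is to proceed by splitting the sum defining $S(\alpha_3,\alpha_2)$ according to residue classes modulo $q$, and then applying Poisson summation to each class. Writing $\alpha_3 = a_3/q + \theta_3$ and $\alpha_2 = a_2/q + \theta_2$, the key observation is that if $\x \equiv \u \pmod{q}$ then $C(\x) \equiv C(\u) \pmod{q}$ and $Q(\x) \equiv Q(\u) \pmod{q}$, so that
\[
e(\alpha_3 C(\x) + \alpha_2 Q(\x)) = e_q(a_3 C(\u) + a_2 Q(\u)) \, e(\theta_3 C(\x) + \theta_2 Q(\x)).
\]
This yields
\[
S(\alpha_3,\alpha_2) = \sum_{\u \bmod{q}} e_q(a_3 C(\u) + a_2 Q(\u)) \sum_{\y \in \ZZ^n} \omega\!\left(\tfrac{\u + q\y}{P}\right) e\!\left(\theta_3 C(\u+q\y) + \theta_2 Q(\u+q\y)\right),
\]
after substituting $\x = \u + q\y$.

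Next I would apply Poisson summation to the inner sum over $\y \in \ZZ^n$. Since $\omega$ is smooth and compactly supported, there are no convergence issues, and the sum equals $\sum_{\m \in \ZZ^n} \hat{f}_\u(\m)$ where $f_\u$ is the function of $\y$ in the inner sum. After the change of variables $\x = (\u + q\t)/P$, which has Jacobian $(P/q)^n$, and using the homogeneity $C(P\x) = P^3 C(\x)$ and $Q(P\x) = P^2 Q(\x)$, the Fourier transform becomes
\[
\hat{f}_\u(\m) = \frac{P^n}{q^n}\, e_q(\m . \u) \int_{\RR^n} \omega(\x)\, e\!\left(\theta_3 P^3 C(\x) + \theta_2 P^2 Q(\x) - \tfrac{P}{q}\m . \x\right) \d\x,
\]
which is exactly $(P^n/q^n)\, e_q(\m . \u)\, I(\theta_3 P^3, \theta_2 P^2; q^{-1}P\m)$ in the notation of \eqref{eq:Iq}.

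Finally, substituting this back and interchanging the order of summation collects the exponential factors over $\u \bmod{q}$ into the complete sum $S(\a,q;\m)$ defined in \eqref{eq:Sq}, delivering the claimed identity. The interchange is justified because for each fixed $\m$ the sum over $\u \bmod{q}$ is finite, while the sum over $\m$ converges absolutely thanks to the rapid decay of $\hat{\omega}$ (or equivalently, repeated integration by parts in the integral defining $I$) for $|\m|$ large. The main point to verify carefully is the algebraic bookkeeping in the change of variables and the emergence of $e_q(\m . \u)$ from the linear phase $-\m . \t$, but there is no analytic obstacle: this is a routine instance of the Poisson summation identity in the presence of a multiplicative decomposition of the phase into a $q$-periodic piece and a smooth piece, of the sort that appears throughout circle-method arguments.
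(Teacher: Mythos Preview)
Your proof is correct and follows exactly the same approach as the paper's own argument: split into residue classes modulo $q$, apply Poisson summation to the inner sum, and make the change of variables $\x=(\u+q\t)/P$. The paper's proof is terser (it simply says ``an application of Poisson summation, followed by an obvious change of variables''), but you have filled in precisely the details it omits, including the emergence of the factor $e_q(\m\cdot\u)$ and the justification for interchanging the sums.
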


\begin{proof}
Write $\x=\y+q\z$, for $\y \bmod{q}$.
Then we obtain
\begin{align*}
  S(\alpha_3,\alpha_2)=~&
  \sum_{\y\bmod{q}}e_q(a_3C(\y)+a_2Q(\y))\\
  &\times
\sum_{\z\in\ZZ^n}\omega((\y+q\z)/P)e(\theta_3C(\y+q\z)+
\theta_2Q(\y+q\z)).
\end{align*}
The statement of the lemma follows from an application of Poisson
summation, followed by an obvious  change of variables.  
\end{proof}

\bigskip

We begin by analysing 
the complete exponential sums $S(\a,q;\m)$ given by \eqref{eq:Sq}, for
$\gcd(q,\a)=1$ and $\m\in \ZZ^n$.  
They satisfy the multiplicativity property 
\begin{equation}\label{eq:mult}
S(\a,rs;\m)=S(\a_s,r;\m)S(\a_r,s;\m), \quad \mbox{for $\gcd(r,s)=1$},
\end{equation}
where 
\[\a_t:=(t^2a_3,ta_2).\]
The proof of this fact is standard (see \cite[Lemma 10]{41}, for example).
In view of this it will suffice to analyse 
$S(\a,q;\m)$ for prime power values of $q$.

It will be convenient to give a separate treatment of the moduli $q$
that are built from prime divisors of $a_3$. 
Recall the shape \eqref{eq:didef} that $Q$ takes, with 
$d_1\cdots d_{n-1}\not=0$. For a given prime $p$ we let $p^v$ be the
largest power of $p$ dividing any of 
$2d_1,\ldots,2d_{n-1}$.  Since $Q$ is fixed we will have $p^v\ll 1$.  We can
now state our result.

\begin{lemma}\label{lem:bad-prime}
Suppose that $p^{1+v}\mid a_3$ and let $r\geq 1$.
Then for  any  $\m\in 
\ZZ^n$, we have 
$$
S(\a,p^r;\m)\ll p^{r(n+1)/2}.
$$
\end{lemma}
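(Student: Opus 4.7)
The argument divides into two regimes according to the size of $r$ relative to the constant $v$. When $r\leq 1+v$, the divisibility $p^{1+v}\mid a_3$ forces $a_3 C(\y)\equiv 0\pmod{p^r}$ and the sum $S(\a,p^r;\m)$ becomes a pure Gauss sum which factorises across coordinates thanks to the diagonal form of $Q$:
\[
S(\a,p^r;\m) = \prod_{i=1}^{n}\sum_{y_i\bmod{p^r}} e_{p^r}(a_2 d_i y_i^2 + m_i y_i).
\]
For $i\leq n-1$, using $v_p(2d_i)\leq v$ and $\gcd(a_2,p)=1$ (the latter following from $\gcd(q,\a)=1$ since $p\mid a_3$), completing the square gives the one-variable bound $O(p^{(r+v)/2})$; for $i=n$ the trivial estimate $p^r$ suffices. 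Multiplying yields $|S|\ll p^{(n-1)(r+v)/2 + r} = O(p^{r(n+1)/2})$.

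When $r\geq 2+2v$, the plan is to perform a Poisson-type splitting by writing $\y = \y_0 + p^{r-1-v}\y_1$ with $\y_0\bmod{p^{r-1-v}}$ and $\y_1\bmod{p^{1+v}}$, and to expand $F(\y) := a_3C(\y)+a_2Q(\y)+\m\cdot\y$ by Taylor's theorem. The divisibility $p^{1+v}\mid a_3$ together with the choice of exponent $r-1-v$ forces all Taylor cross-terms in $\y_1$ of order $\geq 2$ to vanish modulo $p^r$, so that
\[
F(\y_0 + p^{r-1-v}\y_1)\equiv F(\y_0) + p^{r-1-v}\nabla F(\y_0)\cdot\y_1 \pmod{p^r},
\]
and similarly $\nabla F(\y_0)\equiv 2a_2 D\y_0 + \m \pmod{p^{1+v}}$, where $D=\mathrm{diag}(d_1,\ldots,d_n)$, since the cubic gradient part is also divisible by $p^{1+v}$. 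Summing over $\y_1$ then produces the factor $p^{n(1+v)}$ and restricts the outer sum to the affine set $\cS$ of $\y_0\bmod{p^{r-1-v}}$ satisfying $2a_2 D\y_0 + \m\equiv\0 \pmod{p^{1+v}}$. Parameterising $\cS$ by $\y_0 = \t + \mathbf{P}\u$, where $\mathbf{P}$ is diagonal with $i$-th entry $p^{1+v-v_p(2a_2 d_i)}$ for $i\leq n-1$, reduces the remaining sum to a quadratic Gauss sum in effectively $n-1$ variables, the cubic appearing only as a perturbation of strictly higher $p$-adic valuation; combining all factors yields $|S|\ll p^{r(n+1)/2}$. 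In the narrow intermediate range $1+v<r<2+2v$, $r$ is bounded purely in terms of $v$, and the trivial bound $|S|\leq p^{rn}$ exceeds $p^{r(n+1)/2}$ only by a constant factor that may be absorbed into the implicit constant.

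The main obstacle lies in the second regime: the cubic $a_3 C(\y_0)$ in the restricted sum over $\cS$ does not vanish identically modulo $p^r$, and I must track the $p$-adic valuations of every monomial in the Taylor expansion of $F(\t+\mathbf{P}\u)$ in order to confirm that the cubic contribution is truly absorbed by the Gauss sum bound. Additional care will be required for a possibly degenerate last coordinate (as when $d_n=0$ in the setting supplied by Corollary~\ref{c:3.2}), since this coordinate yields no quadratic decay and must be handled via the linear constraint coming from $\m$ together with the surviving cubic structure.
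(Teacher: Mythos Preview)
Your small-$r$ case and the intermediate range are fine: when $v\ge 1$ the prime $p$ is one of the finitely many divisors of $2\prod_{i\le n-1}d_i$, so any factor $p^{O(v)}$ is absorbed into the implied constant, and when $v=0$ the case $r=1$ is a genuine product of one-variable Gauss sums.

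The main regime $r\ge 2+2v$ is, as you yourself flag, not finished. The Hensel splitting $\y=\y_0+p^{r-1-v}\y_1$ correctly yields
\[
S=p^{n(1+v)}\sum_{\y_0\in\cS}e_{p^r}\bigl(a_3C(\y_0)+a_2Q(\y_0)+\m\cdot\y_0\bigr),
\]
and the parameterisation $\y_0=\t+\mathbf{P}\u$ is the natural next step. But after dividing out the common power of $p$, the phase in $\u$ (taking $v=0$ for concreteness) is $a_2Q(\u)+L(\u)+p\cdot(\text{quadratic from }C)+p^2a_3'C(\u)$ modulo $p^{r-2}$, with $a_3'=a_3/p$. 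This is \emph{not} a pure Gauss sum: it is a sum of exactly the same structural type as the original, only with the cubic weight one notch smaller and the modulus dropped by $p^2$. Closing the argument therefore requires an induction on $\lfloor r/2\rfloor$, keeping track at every stage of the off-diagonal quadratic perturbation that the cubic injects and of the free $n$-th coordinate. None of this bookkeeping is carried out, and it is not a triviality.

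The paper bypasses this altogether by a single Weyl squaring. One freezes $x_n$, writes $S(x_n)$ for the sum over $x_1,\ldots,x_{n-1}$, and bounds
\[
|S(x_n)|^2\le\sum_{\y\bmod p^r}\Bigl|\sum_{\x\bmod p^r}
e_{p^r}\Bigl(2a_2\sum_{i\le n-1}d_ix_iy_i+p^{1+v}\sum_i y_ig_i(\x,\y)\Bigr)\Bigr|,
\]
the point being that the \emph{differenced} cubic is already divisible by $p^{1+v}$. If $p^h$ exactly divides $\gcd(y_1,\ldots,y_{n-1})$ with $h\le r-v-1$, the substitution $x_1=s+p^{r-h-v-1}t$ makes the $t$-dependence purely linear with a non-integral frequency, so the inner sum vanishes; hence only $\y$ with $p^{r-v}\mid y_i$ survive, giving $O(1)$ choices and $|S(x_n)|^2\ll p^{r(n-1)}$. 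Summing trivially over $x_n$ yields the lemma. This handles all $r\ge 1$ uniformly and disposes of the degenerate last coordinate simply by fixing it; there is no iteration and no case analysis on $r$.
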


\begin{proof}
Since $p\mid a_3$ we may assume that $p\nmid a_2$.  Let $S=S(x)$ be the sum
\begin{align*}
\sum_{x_1,\ldots,x_{n-1}\bmod{p^r}}
\hspace{-0.3cm}
e_{p^r}\big(a_2Q(x_1,\ldots,x_{n-1},x)+a_3C(x_1,\ldots,x_{n-1},x)\big).
\end{align*}
We will show that $S\ll p^{r(n-1)/2}$ for every $x$, which will suffice. 
Our approach is based on applying Weyl's method to  
$|S|^2$.  This gives
\beql{eq:s2nn}
|S|^2\leq \sum_{y_1,\ldots,y_{n-1}
  \bmod{p^r}}\left|\sum_{x_1,\ldots,x_{n-1}\bmod{p^r}} 
e_{p^r}(f)\right|
\eeq
where $f=f(x_1,\ldots,x_{n-1};y_1,\ldots,y_{n-1})$ has the shape
\[2a_2\sum_{i=1}^{n-1}d_ix_iy_i+
p^{1+v}\sum_{i=1}^{n-1}y_ig_i(x_1,\ldots,x_{n-1};y_1,\ldots,y_{n-1}),\]
since $p^{1+v}\mid a_3$. Here the $g_i$ are
suitable polynomials defined over $\ZZ$.

Suppose now that we have an exponent $h\le r-v-1$ such that
$p^h|y_1,\ldots,y_{n-1}$, but some $y_i$ is not divisible by
$p^{h+1}$.  Let us suppose that $p^{h+1}\nmid y_1$, say.  Writing
$x_1=s+p^{r-h-v-1}t$, with $s$ running modulo $p^{r-h-v-1}$ and $t$
modulo $p^{h+v+1}$, one finds that
$$
f\equiv 2a_2d_1y_1p^{r-h-v-1}t+
f_0(s;x_2,\ldots,x_{n-1};y_1,\ldots,y_{n-1}) \bmod{p^r},
$$
for some integral polynomial $f_0$.  It follows that the sum over $t$
vanishes unless $p^{h+v+1}\mid 2a_2d_1y_1$. However this latter
condition would contradict
the facts that $p\nmid a_2$, $p^{v+1}\nmid 2d_1$ and $p^{h+1}\nmid y_1$.

We therefore deduce that the inner sum of \eqref{eq:s2nn} vanishes
unless $p^{r-v}$ divides each of $y_1,\ldots,y_{n-1}$.  There are
therefore $p^v\ll 1$ choices for each of these, and for each such
choice the inner sum has modulus at most $p^{r(n-1)}$.  We then deduce
that $|S|^2\ll p^{r(n-1)}$, and the lemma follows.
\end{proof}

We are now ready to begin in earnest our  treatment of the exponential
sum $S(\a,q;\m)$ for $q\in \NN$.  
Let us write $q=q_0q_1q_2$, where
\beql{eq:q02} 
q_0=\prod_{\substack{p^e\| q\\ p^{1+v}\mid a_3}} p^e,\;\;\;
q_2=\prod_{\substack{p^e\| q,\,e\ge 3\\ p^{1+v}\nmid a_3}} p^e.
\eeq
Thus $q_1$ is cube-free, and $\gcd(q_1q_2,a_3)$
divides $\prod_p p^v$, which in turn divides $2\prod_{i=1}^{n-1}d_i,$
where $d_i$ are the coefficients of $Q$.  It follows that
$\gcd(q_1q_2,a_3)\ll 1$.

Lemma \ref{lem:bad-prime} and \eqref{eq:mult} will suffice to deal with
the sum associated to the modulus $q_0$.  
The cube-free modulus $q_1$ will be handled via the following result. 

\begin{lemma}\label{lem:sq-free}
Let $\ve>0$. 
Suppose that $q$ is cube-free, and is a product of primes $p$ for
which $p^{1+v}\nmid a_3$.
Then for any  $\m\in 
\ZZ^n$, we have 
$$
S(\a,q;\m)\ll q^{n/2+\ve}.
$$
\end{lemma}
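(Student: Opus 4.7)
The plan is to reduce, via the multiplicativity property \eqref{eq:mult}, to the case of a prime-power modulus $q=p^r$, and then treat the two possibilities $r\in\{1,2\}$ allowed by the cube-free assumption on $q$. Since $C$ is non-singular over $\QQ$ (as arranged at the start of Section~\ref{s:7}), for all but finitely many primes $p$ we have both $v=0$ and $C$ non-singular modulo $p$; call these the \emph{good primes}. For good primes the hypothesis $p^{1+v}\nmid a_3$ simply reads $p\nmid a_3$. The finitely many bad primes contribute only a bounded multiplicative factor, absorbable into the implied constant, while the uniform per-prime constants for the good primes combine to give a factor $K^{\omega(q)}\ll q^\ve$ for a suitable constant $K$, producing the desired factor of $q^\ve$ in the final bound.

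For a good prime $p$ with $r=1$, the cubic leading form of $a_3C(\y)+a_2Q(\y)+\m.\y$ modulo $p$ is $a_3C(\y)$, which defines a non-singular projective hypersurface in $\PP^{n-1}_{\FF_p}$. Deligne's bounds for exponential sums attached to polynomials with non-singular leading form then yield $|S(\a,p;\m)|\ll p^{n/2}$. For $r=2$, I substitute $\y=\u+p\v$ with $\u,\v$ running modulo $p$ and Taylor-expand; modulo $p^2$ the dependence on $\v$ is purely linear, so the inner sum over $\v$ vanishes unless
\[
a_3\nabla C(\u)+a_2\nabla Q(\u)+\m\equiv\0\pmod{p}.
\]
This leaves $|S(\a,p^2;\m)|\le p^n N$, where $N$ counts $\u\bmod{p}$ solving this system of $n$ quadratic congruences.

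To bound $N$ I homogenise the system with a new variable $u_0$, obtaining $n$ homogeneous quadratic equations in $\PP^n_{\FF_p}$. Any putative point at infinity (with $u_0=0$) would force $a_3\nabla C(\u)\equiv\0$, which by non-singularity of $C$ modulo $p$ combined with $p\nmid a_3$ admits only the trivial solution; the projective closure of the solution variety therefore meets the hyperplane at infinity trivially, so it is simultaneously projective and affine, hence $0$-dimensional, and Bezout's theorem yields $N\le 2^n$. This produces $|S(\a,p^2;\m)|\ll p^n=(p^2)^{n/2}$, and combining the prime-power bounds via \eqref{eq:mult} yields the lemma. The main obstacle is obtaining square-root cancellation in the $r=1$ case, for which Deligne's theorem on exponential sums over finite fields is essential; the $r=2$ case then reduces, via a routine Taylor expansion, to a geometric counting problem resolved cleanly by the non-singularity of $C$.
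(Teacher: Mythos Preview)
Your proof is correct and follows essentially the same route as the paper: reduce via multiplicativity to prime powers $p^r$ with $r\in\{1,2\}$, dispose of the finitely many bad primes by absorbing them into the implied constant, and handle good primes with $r=1$ via Deligne's estimate for polynomials with non-singular leading form. For $r=2$ the paper simply cites Heath-Brown \cite{HB-canada}, whereas you spell out that argument directly (Taylor expansion $\y=\u+p\v$, the resulting critical-point congruence, and the $0$-dimensional Bezout bound coming from the fact that the projectivised system has no points at infinity by non-singularity of $C$); this is exactly the content of that reference, so your treatment is a self-contained version of the same proof rather than a genuinely different approach.
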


\begin{proof}
By \eqref{eq:mult} it will suffice to show that 
$
S(\a,p^r;\m)\ll p^{rn/2},
$
for $r\in \{1,2\}$ and 
each prime with $p^{1+v}\nmid a_3$. The result is trivial for the
finitely many primes with $v\not=0$.  Indeed we may assume that 
$p\gg 1$, where the
implied constant is taken  
 large enough to ensure that 
$C$ is non-singular modulo $p$.
When $r=2$ the result therefore follows from work of Heath-Brown
\cite{HB-canada}. 
Suppose next that $r=1.$ 
We wish to apply the estimate
\[\sum_{\x\in \FF_p^n}e_p(f(\x))\ll_{d,n} p^{n/2},\]
of Deligne \cite{deligne}, which applies to any polynomial $f$ over
$\FF_p$ of degree $d$, in $n$ variables, whose leading homogeneous part is
non-singular modulo $p$.  
Taking $f(\x)= a_3C(\x)+a_2Q(\x)+\m.\x$ we get 
$S(\a,p;\m)\ll p^{n/2}$, as required.
\end{proof}

It is now time to turn our attention to the cube-full modulus $q_2$, with
$\gcd(q_2,a_3)\ll 1$.  
Our next goal is  the  following  variant of 
 \cite[Lemma~14]{hb-10}.

\begin{lemma}\label{lem:anaL14}
Let $\ve>0$ and let $\m_0\in 
\RR^n$. 
Suppose that $q$ is cube-full, with $\gcd(q,a_3)\ll 1$.
Then we have 
\[
\sum_{|\m-\m_0|\le V}|S(\a,q;\m)|\ll  
q^{n/2+\ve}\left\{
V^n+q^{n/3} \right\},
\] 
for any $V\geq 1$.
\end{lemma}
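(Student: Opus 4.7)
The plan is to adapt the argument of \cite[Lemma~14]{hb-10} to the present mixed-degree setting. By the multiplicativity relation \eqref{eq:mult} and the Chinese Remainder Theorem (which factors the box condition $|\m - \m_0| \le V$ into component conditions on each prime-power residue), it suffices to establish the local analogue of the inequality for prime-power moduli $q=p^r$ with $r \ge 3$. The finitely many bad primes $p$ at which $C$ becomes singular modulo $p$, at which $p\mid 2d_1\cdots d_{n-1}$, or at which $p\mid\gcd(q,a_3)$ (there are only $O(1)$ such primes, by the hypothesis $\gcd(q,a_3)\ll 1$) are disposed of via the trivial bound $|S(\a,p^r;\m)|\le p^{rn}$, whose total contribution is absorbed into $q^\varepsilon$.

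For the remaining good primes I would first establish the pointwise square-root bound $|S(\a,p^r;\m)|\ll p^{rn/2+\varepsilon}$ via the substitution $\y=\z+p^k\u$ with $k=\lceil r/2\rceil$, for which $2k\ge r$ and $3k\ge r$. The Taylor expansion modulo $p^r$ then truncates to
\[
a_3 C(\z)+a_2 Q(\z)+\m\cdot\z+p^k\bigl(a_3\nabla C(\z)+a_2\nabla Q(\z)+\m\bigr)\cdot\u,
\]
since the quadratic- and cubic-in-$\u$ terms pick up factors $p^{2k}$ and $p^{3k}$ respectively. Orthogonality of additive characters in $\u$ collapses the sum to the congruence condition $a_3\nabla C(\z)+a_2\nabla Q(\z)+\m\equiv 0\bmod{p^{r-k}}$ on $\z$. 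A further substitution $\z=\w+p\v$ on the solution set converts the residual inner sum over $\v$ into a Gauss sum governed by the Hessian $a_3 H_C(\w)+a_2 H_Q$, which has full rank generically thanks to the non-singularity of $C$ and the condition $p\nmid a_3$ at good primes.

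Secondly, I would show that the admissible residues $\m$ (those with $S(\a,p^r;\m)\neq 0$) are confined to a sublattice of $(\ZZ/p^r\ZZ)^n$ of index $\sim p^{rn/3}$. In the box $|\m-\m_0|\le V$ the number of such $\m$ is then $\ll V^n+p^{rn/3}$, by distinguishing the regimes $V\le p^{r/3}$ and $V>p^{r/3}$. Combining this count with the pointwise square-root bound yields the local estimate $\ll p^{rn/2+\varepsilon}(V^n+p^{rn/3})$, and the global bound $q^{n/2+\varepsilon}(V^n+q^{n/3})$ follows by multiplicative reassembly over the prime-power factors of $q$.

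The hard part will be the second step: rigorously establishing the sublattice structure of index $\sim p^{rn/3}$ for the admissible $\m$, and verifying that the exceptional residues $\w$ at which the Hessian of the phase degenerates contribute negligibly. Unlike the squarefree case treated in Lemma~\ref{lem:sq-free}, where one appeals directly to Deligne's bound, the cube-full regime demands a careful iterated $p$-adic analysis, exploiting the non-singularity of $C$ to control the singular locus of the gradient map $\z\mapsto a_3\nabla C(\z)+a_2\nabla Q(\z)$ and to ensure that the multi-stage Gauss sums retain square-root cancellation.
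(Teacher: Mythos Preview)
Your proposal has a genuine structural gap in its very first move. You claim that multiplicativity together with the Chinese Remainder Theorem ``factors the box condition $|\m-\m_0|\le V$ into component conditions on each prime-power residue''. This is false: the constraint $|\m-\m_0|\le V$ is an \emph{archimedean} condition on integer vectors $\m\in\ZZ^n$, not a congruence condition. Writing $q=\prod_i p_i^{r_i}$, the multiplicativity \eqref{eq:mult} gives $|S(\a,q;\m)|=\prod_i|S(\a_i,p_i^{r_i};\m)|$ with the \emph{same} $\m$ in every factor, so
\[
\sum_{|\m-\m_0|\le V}|S(\a,q;\m)|=\sum_{|\m-\m_0|\le V}\prod_i|S(\a_i,p_i^{r_i};\m)|,
\]
which is a sum of products, not a product of sums. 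There is no way to decouple this into independent local box-sums without a loss that destroys the bound. Even granting the local prime-power estimate, the ``multiplicative reassembly'' you sketch would produce $\prod_i\bigl(V^n+p_i^{r_in/3}\bigr)$, which is not bounded by $V^n+q^{n/3}$.

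Your second step is also problematic. The set of admissible $\m$ modulo $p^{r-k}$ is the image of the polynomial map $\z\mapsto -a_3\nabla C(\z)-a_2\nabla Q(\z)$, which is not a sublattice; the index-$p^{rn/3}$ claim has no obvious justification, and the count of image points in a box of side $V$ is exactly the delicate issue.

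The paper avoids both problems by working directly with the full cube-full modulus $q=c^2d$ (with $d$ squarefree) rather than reducing to prime powers. Following \cite[\S5]{41}, one first establishes the pointwise bound
\[
|S(\a,q;\m)|\le q^{n/2}\sum_{\substack{\u\bmod c\\ c\mid(\nabla g(\u)+\m)}}M_d(\u)^{1/2},
\]
with $g(\u)=a_3C(\u)+a_2Q(\u)$, and then sums over $\m$ in the box and $\u$ jointly, interchanging the order. The resulting quantity $\mathcal{S}(V)$ is estimated via the geometry-of-numbers machinery of \cite[Lemmas~13,~14,~16]{41}, yielding $\mathcal{S}(V)\ll q^\ve V^n(1+q/V^3)^{n/2}$. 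A monotonicity trick, replacing $V$ by $V_1=V+q^{1/3}$, then converts this to the stated form $q^{n/2+\ve}(V^n+q^{n/3})$.
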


For the proof we will
modify parts of the argument from Browning and Heath-Brown \cite[\S
  5]{41}.  We will be fairly brief, since the changes necessary are
minor, if somewhat tedious. We will write our square-full modulus 
$q$ as $q=c^2d$ with $d$ square-free. 

Firstly, in analogy to \cite[Lemma 11]{41}, one may show that
\begin{equation}\label{eq:rhodius}
|S(\a,q;\m)|\leq 
(c^2d)^{n/2}
\sum_{\substack{\ma{u}\bmod{c}\\ c \mid (\nabla g(\ma{u})+\m)}}M_{d}(\ma{u})^{1/2},
\end{equation}
where 
\[g(\u)=a_3C(\u)+a_2Q(\u)\]
and
$$
M_d(\u)=\#\left\{\x \bmod{d}: \nabla^2 g(\u).\x 
\equiv \ma{0} \bmod{d}\right\}.
$$
Corresponding to the sum $\mathcal{S}(V,a;\m_0,c,d)$ in
\cite[Eq.~(5.9)]{41} we define
\[\mathcal{S}(V)=\mathcal{S}(V,a_3,a_2;\m_0,c,d):=
\sum_{|\m-\m_0|\le  V} 
\sum_{\substack{\b{a}\bmod{c}\\c \mid (\nabla  g(\ma{a})+\m)}}M_{d}(\ma{a})^{1/2}.\]

We would like to adapt \cite[Lemma 16]{41} to our present situation. 
Note that \cite[Section 5]{41} is concerned with exponential sums 
associated to general cubic polynomials $g$ for which the cubic part
is non-singular and 
$\|g\|_P=\|P^{-3}f(Px_1,\ldots,Px_n)\|\leq H$ for some parameter $H$. 
In our setting one may verify that it is possible to replace $H$ by 1
in the various estimates of \cite{41}. A number of trivial adjustments
need to be made since we have $\gcd(c^2d,a_3)\ll 1$, rather than
$\gcd(c^2d,a)=1$. 

Note that \cite[Lemma 13]{41} can be applied directly with $H\ll 1$
since it pertains only to the cubic part $C$ of $g$, where
$\|g_0\|_P=\|g_0\|\ll 1$. 
Moreover, \cite[Lemma 14]{41} can also be applied, with 
$D\ll 1$. 
Turning to the analogue of \cite[Lemma 16]{41}, 
which relies on 
\cite[Lemmas 13 and 14]{41}, the proof goes through unchanged, 
with $D\ll 1$ and $H\ll
1$. 
For any $\ve>0$, this leads to the estimate 
\begin{align*}
\mathcal{S}\ll q^\ve V^{n}\Big(1+ \frac{c^2d}{V^3}\Big)^{n/2}.
\end{align*}
But then, taking
\[V_1=V+(c^2d)^{1/3},\]
we have
\[\mathcal{S}(V)\le\mathcal{S}(V_1)
\ll q^\ve V_1^{n}\Big(1+ \frac{c^2d}{V_1^3}\Big)^{n/2}
\ll q^\ve \big(V^{n}+ (c^2d)^{n/3}\big).\]
Lemma \ref{lem:anaL14} now follows from \eqref{eq:rhodius}.

\bigskip

We next turn to the analysis of the exponential integral 
\begin{align*}
I&=I(\theta_3P^3,\theta_2P^2;q^{-1}P\m)\\
  &= \int_{\RR^n} \omega(\x)e\big(\theta_3 P^3C(\x)+\theta_2
P^2Q(\x)-q^{-1}P\m.\x\big)\d\x. 
  \end{align*}
For this it will be convenient to write
$$
f(\x)=\theta_3 P^3C(\x)+\theta_2 P^2Q(\x).
$$ 
We will proceed by 
adapting the proof of  \cite[Lemma 6]{41}, 
noting that our weight function $\omega$ belongs to the class of
weight functions considered therein.  
Let $\nu\in \RR$ be a parameter in the range $0<\nu\leq 1$, to be
chosen in due course. 
We decompose 
$I$ into an average of integrals over subregions
of size at most $\nu$. It follows from \cite[Lemma 2]{HB-crelle}
that there exists an infinitely differentiable  weight function
$w_\nu(\x,\y):\RR^{2n}\rightarrow \RR_{\geq 0}$, 
such that
$$
\omega(\x)=\nu^{-n}\int_{\RR^n} w_{\nu}\Big(\frac{\x-\y}{\nu},\y\Big)\d\y.
$$
Moreover, $\supp(w_\nu)\subseteq [-1,1]^n\times \supp(\omega)$.
Then on making this substitution into $I$, and writing $\x=\y+\nu \u$, 
we obtain
\begin{equation}\begin{split}
\label{eq:lemIq:1}
|I|
&=\nu^{-n}\Big|\int_{\RR^n}\int_{\RR^n}
w_\nu\big(\nu^{-1}(\x-\y),\y\big)e(f(\x)-q^{-1}P\ma{m}.\x)\d\x\d\y\Big|
\\
&\leq\int_{\RR^n}\Big|\int_{\RR^n} w_\nu(\u,\y)e(f(\y+\nu \u)-\nu q^{-1}P
\ma{m}.\u)\d\u\Big|\d\y\\
&=\int_{\supp(\omega)} |K(\y)|\d\y,
\end{split}\end{equation}
say. 

Let us 
write $F(\u)=f(\y+\nu \u)-\nu q^{-1}P\ma{m}.\u$, for fixed $\y$.
It is clear that $f(\y+\nu\u)\ll \Theta $  for any 
$(\y,\u)\in \supp(\omega)\times [-1,1]^n$, 
where we put 
\begin{equation}\label{eq:def-T}
\Theta = 1+|\theta_3|P^3+|\theta_2|P^2.
\end{equation}
For such $(\y,\u)$  
it follows that the $k$-th order
derivatives of $F(\u)$ are all $O_k(\nu^k\Theta  )$, for $k\geq 2$.
Likewise, one finds that
$$
\nabla F(\u) = \nu \nabla f(\y) - \nu q^{-1}P \ma{m} +O(\nu^2 \Theta ).
$$
Let $R\geq 1$ and suppose that $|
 \nabla f(\y) - q^{-1}P \ma{m}|\geq \nu^{-1}R$.  Then it follows that 
there exists a constant $c(n)>0$
such that $|\nabla F(\u)|\gg R$, provided that 
$$
R\geq c(n) \nu^2 \Theta .
$$
We will take $\nu=\Theta ^{-1/2}$, so that $0<\nu\leq
1$. An application of \cite[Lemma 10]{HB-crelle} now reveals that
$K(\y)\ll_{N} R^{-N}$ for any $N\geq 1$, when $R\geq c(n)$.
Inserting this into \eqref{eq:lemIq:1} gives
$$
I \ll_{N} R^{-N}+ \meas \mathcal{S}(R) 
$$
for any $N\geq 1$ and  any $R\geq c(n)$, where we have written
\[\mathcal{S}(R)= 
\left\{\y\in\supp(\omega):~ |\nabla f(\y) - q^{-1}P \ma{m}|
\leq R \sqrt{\Theta }\right\}.\]
If we choose $R=P^{\ve}$ with
some fixed $\ve>0$ then $R^{-N}$ can be made smaller than any given
negative power of $P$, via a  suitable choice of $N$.  This leads to the
following result.
\begin{lemma}\label{lem-expint}
Let $\ve>0$ and $N\in\NN$ be given.  Then
\[I(\theta_3P^3,\theta_2P^2;q^{-1}P\m)\ll_{N}
P^{-N}+ \meas \mathcal{S}(P^{\ve}). \] 
\end{lemma}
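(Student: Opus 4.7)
The plan is to synthesize the analysis laid out in the paragraphs immediately preceding the statement into a formal proof. First I would apply the smooth partition from \cite[Lemma 2]{HB-crelle} at scale $\nu = \Theta^{-1/2}$ (noting $\Theta \ge 1$ so $0 < \nu \le 1$), which writes $\omega$ as an average of bump functions $w_\nu$ concentrated on $O(\nu)$-balls. After a change of variables $\x = \y + \nu\u$, this yields the bound
\[
|I| \le \int_{\supp(\omega)} |K(\y)|\,\d\y,
\]
where $K(\y)$ is an oscillatory integral in $\u$ over $[-1,1]^n$ with phase $F(\u) = f(\y+\nu\u) - \nu q^{-1}P\m.\u$.

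Next I would estimate the derivatives of $F$. A Taylor expansion shows $\nabla F(\u) = \nu\nabla f(\y) - \nu q^{-1}P\m + O(\nu^2\Theta)$ and, for $k \ge 2$, the $k$-th order derivatives are $O_k(\nu^k \Theta) = O_k(\Theta^{1-k/2}) = O_k(1)$. Our choice of $\nu$ makes the error term $\nu^2\Theta$ in the gradient exactly $O(1)$. Consequently, if $R \ge c(n)$ for a suitably chosen constant $c(n)$ and $|\nabla f(\y) - q^{-1}P\m| \ge R\sqrt{\Theta}$, the leading term in $\nabla F$ dominates the $O(\nu^2 \Theta)$ error and we have $|\nabla F(\u)| \gg R$ uniformly on the $\u$-support. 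The multidimensional first-derivative bound \cite[Lemma 10]{HB-crelle} then gives $K(\y) \ll_M R^{-M}$ for any fixed $M$.

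For the remaining $\y \in \mathcal{S}(R)$, I would estimate $K(\y) = O(1)$ trivially. Combining the two ranges via $\int |K(\y)|\,\d\y$ gives
\[
I \ll_M R^{-M} + \meas \mathcal{S}(R),
\]
valid for every fixed $R \ge c(n)$ and $M \in \NN$. Finally, setting $R = P^\ve$ and choosing $M = \lceil N/\ve \rceil$ (with $P$ large enough that $P^\ve \ge c(n)$) absorbs the first term into $P^{-N}$, yielding the claimed estimate.

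The only subtle point is checking that the hypotheses of the first-derivative bound \cite[Lemma 10]{HB-crelle} are met, which amounts to verifying that higher-order derivatives of $F$ are bounded relative to the lower bound $|\nabla F| \gg R$; the scaling $\nu = \Theta^{-1/2}$ was engineered precisely so that this works out, so there is no substantive obstacle beyond careful bookkeeping of constants.
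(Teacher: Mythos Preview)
Your proposal is correct and follows essentially the same approach as the paper: the smooth partition at scale $\nu=\Theta^{-1/2}$, the change of variables $\x=\y+\nu\u$, the derivative estimates for $F$, the application of the first-derivative bound \cite[Lemma 10]{HB-crelle} off $\mathcal{S}(R)$, and the final choice $R=P^{\ve}$ with $M$ large are exactly what the paper does. The only cosmetic difference is that the paper phrases the last step as ``$R^{-N}$ can be made smaller than any given negative power of $P$'' rather than writing $M=\lceil N/\ve\rceil$ explicitly.
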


Alternatively, if $|\ma{m}|\ge c\Theta q/P$ with a suitably large constant
$c$, then $|\nabla f(\y)|\le\tfrac12 q^{-1}P|\ma{m}|$ for
$\y\in\supp(\omega)$.  Thus, if we take 
\[R=\tfrac13 q^{-1}P|\ma{m}|\Theta ^{-1/2},\]
say, then $\mathcal{S}(R)$ 
will be empty.  Moreover, if $|\ma{m}|\ge P^{\ve-1}q\Theta $
for some positive $\ve<1$ then we will have
\[\frac{R}{(P|\ma{m}|)^{\ve/3}}=\frac{(P|\ma{m}|)^{1-\ve/3}}{3q\sqrt{\Theta }}
\ge\frac{P^{\ve(1-\ve/3)}\Theta ^{1/2-\ve/3}}{3q^{\ve/3}}
\ge\frac{P^{\ve(1-\ve/3)}}{3P^{2\ve/3}}\ge \frac{1}{3},\]
since $\Theta \ge 1$ and $q\le P^2$.  It follows that $R\gg
(P|\ma{m}|)^{\ve/3}$ whenever $|\ma{m}|\ge P^{\ve-1}q\Theta $.
This leads to the following conclusion.
\begin{lemma}\label{lem:int-tail}
Let $\ve>0$ and let $N\in\NN$ be given.  Then
\[I(\theta_3P^3,\theta_2P^2;q^{-1}P\m)\ll_{N}
P^{-N}|\ma{m}|^{-N}\]
whenever $|\ma{m}|\ge P^{\ve-1}q\Theta $.
\end{lemma}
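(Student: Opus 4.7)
The plan is to invoke the first-derivative analysis already developed just before the statement of Lemma \ref{lem-expint}, but with the parameter $R$ chosen as a function of $|\ma{m}|$ rather than fixed at $P^{\ve}$. The idea is to make $R$ so large that the localised set $\mathcal{S}(R)$ is empty, while simultaneously $R$ dominates any fixed positive power of $P|\ma{m}|$.

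First I would observe that on $\supp(\omega)$ one has the trivial bound $|\nabla f(\y)| \le c(n)\Theta$ for some constant $c(n)$, by definition of $\Theta$ in \eqref{eq:def-T}. Under the hypothesis $|\ma{m}| \ge P^{\ve-1}q\Theta$, for $P$ sufficiently large this forces $|\ma{m}| \ge 2c(n)\Theta q/P$, and hence
\[
|\nabla f(\y) - q^{-1}P\ma{m}| \ge q^{-1}P|\ma{m}| - c(n)\Theta \ge \tfrac{1}{2}q^{-1}P|\ma{m}|
\]
throughout $\supp(\omega)$. Consequently, with the choice
\[
R := \tfrac{1}{3}q^{-1}P|\ma{m}|\Theta^{-1/2},
\]
the set $\mathcal{S}(R) = \{\y \in \supp(\omega): |\nabla f(\y) - q^{-1}P\ma{m}| \le R\sqrt{\Theta}\,\}$ is empty.

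Second, I would verify that this $R$ beats $(P|\ma{m}|)^{\ve/3}$. Substituting $|\ma{m}| \ge P^{\ve-1}q\Theta$ into $R/(P|\ma{m}|)^{\ve/3}$ and using the global bounds $\Theta \ge 1$ and $q \le Q_3 Q_2 \le P^{5/3}$, a direct computation (exactly as in the calculation preceding the lemma) yields
\[
\frac{R}{(P|\ma{m}|)^{\ve/3}} = \frac{(P|\ma{m}|)^{1-\ve/3}}{3q\sqrt{\Theta}} \ge \frac{P^{\ve(1-\ve/3)}\Theta^{1/2-\ve/3}}{3q^{\ve/3}} \ge \frac{1}{3},
\]
since $0 < \ve < 1$. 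Thus $R \gg (P|\ma{m}|)^{\ve/3}$.

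Third, I would apply the general first-derivative bound (the version of \cite[Lemma 10]{HB-crelle} underlying Lemma \ref{lem-expint}), which gives $I \ll_{N'} R^{-N'}$ for any $N'\ge 1$ whenever $\mathcal{S}(R)$ is empty and $R \ge c(n)$. Combined with the lower bound on $R$ this yields $I \ll_{N'} (P|\ma{m}|)^{-N'\ve/3}$. Choosing $N' = \lceil 3N/\ve\rceil$ produces the desired estimate $I \ll_N P^{-N}|\ma{m}|^{-N}$. There is no real obstacle here, since the argument is essentially the one sketched in the paragraph preceding the lemma; the only care needed is the clean passage from the hypothesis on $|\ma{m}|$ to the emptiness of $\mathcal{S}(R)$ and to the lower bound $R \gg (P|\ma{m}|)^{\ve/3}$.
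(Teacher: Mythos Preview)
Your proposal is correct and follows exactly the same route as the paper: the same choice $R=\tfrac13 q^{-1}P|\ma m|\Theta^{-1/2}$, the same verification that $\mathcal S(R)$ is empty under the hypothesis on $|\ma m|$, the same inequality chain showing $R\gg(P|\ma m|)^{\ve/3}$, and the same final passage via the first-derivative bound with $N'$ chosen in terms of $N$ and $\ve$. The only cosmetic difference is that you use $q\le P^{5/3}$ where the paper uses the cruder $q\le P^2$, which of course makes no difference to the conclusion.
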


\bigskip
We are now ready to deduce a final estimate for the exponential sum
$S(\alpha_3,\alpha_2)$ in \eqref{eq:S}, for any $(\alpha_3,\alpha_2)\in \RR^2$.
We suppose as before that
$\alpha_i=a_i/q+\theta_i$, with $\a=(a_3,a_2)\in \ZZ^2$ and $q\in \ZZ$
satisfying \eqref{eq:aiq} and 
 $\bth=(\theta_3,\theta_2)\in\RR^2$ satisfying
\eqref{eq:theta}. Here $Q_3,Q_2\in \NN$ are given by \eqref{eq:Q2Q3}.

Our starting point is Lemma \ref{lem:p_sum}.  We use Lemma
\ref{lem:int-tail} to handle the tail of the summation over $\ma{m}$,
so that
\[S(\alpha_3,\alpha_2)\ll 1+
\frac{P^n}{q^{n}}\sum_{|\m|\le P^{\ve-1}q\Theta }|S(\a,q;\m)|.
|I(\theta_3 P^3,\theta_2 P^2;q^{-1}P\m)|\]
for any fixed $\ve>0$.
Next we employ the multiplicativity property \eqref{eq:mult} in
conjunction with Lemmas \ref{lem:bad-prime} and \ref{lem:sq-free} to
show that the second term is 
$$
\ll \frac{P^{n+\ve}}{q^{n}}q_0^{(n+1)/2}q_1^{n/2}
\sum_{|\m|\le P^{\ve-1}q\Theta }
\hspace{-0.3cm}
|S(\a_{q_0q_1},q_2;\m)|. 
|I(\theta_3 P^3,\theta_2 P^2;q^{-1}P\m)|.
$$
We then use Lemma \ref{lem-expint} which shows that this is
$$
\ll 1+\frac{P^{n+\ve}}{q^{n}}q_0^{(n+1)/2}q_1^{n/2}
\int_{\supp(\omega)}
\left(\sum_{|\m-\m_0(\y)|\le V}|S(\a_{q_0q_1},q_2;\m)|\right)\d\y,
$$
where
\[\m_0(\y)=q\big(\theta_3 P^2\nabla C(\y)+\theta_2 P\nabla Q(\y)\big)\]
and
\[V=P^{\ve-1}q\sqrt{\Theta }.\]
Finally Lemma \ref{lem:anaL14} produces the bound
\[S(\alpha_3,\alpha_2)\ll 1+
\frac{P^{n+\ve}}{q^{n}}q_0^{(n+1)/2}q_1^{n/2}q_2^{n/2+\ve} 
\left\{V^n+q_2^{n/3} \right\}.\]
We have therefore established the following result, on re-defining $\ve$.

\begin{lemma}\label{lem:main-poisson}
Let $\ve>0$, and let $q_0,q_2$ and $\Theta $ be defined as in \eqref{eq:q02} and
\eqref{eq:def-T}, respectively.
Then we have  
\[S(\alpha_3,\alpha_2)\ll 
q_0^{1/2}P^{\ve}\{q^{n/2}\Theta ^{n/2}+P^nq^{-n/2}q_2^{n/3}\}.\]
\end{lemma}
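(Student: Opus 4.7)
The plan is to take the Poisson expansion from Lemma \ref{lem:p_sum} and piece together the bounds already proved for the complete exponential sums $S(\a,q;\m)$ together with the archimedean integral $I(\theta_3 P^3,\theta_2 P^2; q^{-1}P\m)$. The central idea is that the three lemmas on $S(\a,q;\m)$ each govern a different arithmetic regime, so one must first split $q$ accordingly.

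First, I would apply Lemma \ref{lem:int-tail} with $N$ large to truncate the $\m$-sum at $|\m|\le V$, where $V:=P^{\ve-1}q\Theta$. The tail contributes $O(1)$, and after dividing out the prefactor $P^n q^{-n}$ one is reduced to bounding
\[
\frac{P^n}{q^n}\sum_{|\m|\le V}|S(\a,q;\m)|\cdot |I(\theta_3 P^3,\theta_2 P^2; q^{-1}P\m)|.
\]
Next, I would decompose $q=q_0q_1q_2$ as in \eqref{eq:q02}, and apply the multiplicativity relation \eqref{eq:mult}. On the $q_0$-factor Lemma \ref{lem:bad-prime} gives a bound $\ll q_0^{(n+1)/2}$, and on the cube-free $q_1$-factor Lemma \ref{lem:sq-free} gives $\ll q_1^{n/2+\ve}$. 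Here the coprimality hypotheses of Lemma \ref{lem:sq-free} are secured by the very definition of $q_0$, which sequesters the troublesome primes.

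Third, I would control the integral using Lemma \ref{lem-expint}. Up to a negligible error, $|I|\ll \meas \mathcal{S}(P^\ve)$, where $\mathcal{S}(R)$ is centred at $\m_0(\y)=q(\theta_3 P^2\nabla C(\y)+\theta_2 P\nabla Q(\y))$ and has radius $\ll V$. Interchanging integration and summation, one is left with
\[
\int_{\supp(\omega)}\sum_{|\m-\m_0(\y)|\le V}|S(\a_{q_0q_1},q_2;\m)|\,\d\y,
\]
to which Lemma \ref{lem:anaL14} directly applies — its hypothesis $\gcd(q_2,a_3)\ll 1$ with respect to the shifted argument $\a_{q_0q_1}$ is again guaranteed by the design of $q_0$. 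This produces $\ll q_2^{n/2+\ve}(V^n+q_2^{n/3})$.

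Finally, collecting everything yields
\[
S(\alpha_3,\alpha_2)\ll 1+\frac{P^{n+\ve}}{q^{n}}q_0^{(n+1)/2}q_1^{n/2}q_2^{n/2}\bigl(V^n+q_2^{n/3}\bigr),
\]
and substituting $V=P^{\ve-1}q\sqrt{\Theta}$, using $q_1^{n/2}q_2^{n/2}\le q^{n/2}$, and relabelling $\ve$ to absorb all accumulated powers of $P^{\ve}$, gives the stated bound. I do not expect any substantive obstacle beyond careful bookkeeping; the main point to watch is that the factor $q_0^{1/2}$ in the final statement arises because Lemma \ref{lem:bad-prime} is weaker than the square-root cancellation of Lemma \ref{lem:sq-free}, leaving the uncompensated factor $q_0^{(n+1)/2}/q_0^{n/2}=q_0^{1/2}$ after dividing by the $q^{n/2}$ supplied to the first term.
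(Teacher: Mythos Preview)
Your proposal is correct and follows the paper's own proof essentially line for line: truncate via Lemma~\ref{lem:int-tail}, factor $q=q_0q_1q_2$ and apply Lemmas~\ref{lem:bad-prime} and~\ref{lem:sq-free} through multiplicativity, localise $\m$ near $\m_0(\y)$ via Lemma~\ref{lem-expint}, and finish with Lemma~\ref{lem:anaL14}. The only blemish is that you use the symbol $V$ for two different quantities --- the initial truncation level $P^{\ve-1}q\Theta$ and the localisation radius $P^{\ve-1}q\sqrt{\Theta}$ coming from $\mathcal{S}(P^{\ve})$ --- and it is the latter (with the square root) that must be fed into Lemma~\ref{lem:anaL14} to obtain the term $q^{n/2}\Theta^{n/2}$; your final substitution gets this right, so the slip is purely notational.
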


We should comment at this point that the first term on the right is
more or less what one would hope for. The second term on the right
could probably be improved, but suffices for our purposes. When $q$
and $\Theta$ are both of order 1, 
so that $(\al_3,\al_2)$ lies in the major arcs, 
we expect that $S(\alpha_3,\alpha_2)$ is approximately
$P^nq^{-n}S(\a,q,\b{0})I(0,0;\b{0})$.  This corresponds to the second
term on the right in Lemma \ref{lem:main-poisson}.  However when $\Theta $
is a little larger than 1 we would expect to have a non-trivial bound for
$I(\theta_3 P^3,\theta_2P^2;\b{0})$, and the lemma does not take any
account of this.

\section{Minor arc contribution: 
Theorem \ref{main}}

In this section we will combine the work of Sections \ref{s:4a} and \ref{s:7}
so as to handle the minor arcs for Theorem \ref{main}.  Thus we will
assume that $h=n\geq 29$ and that $\rho\ge n-1$. Lemma \ref{lem:Weyl4}
then gives 
a satisfactory result under the first alternative of Lemma
\ref{lem:Weyl3}, provided that $n\ge 29$.  Indeed we see that one
cannot hope to handle the case $n=28$, even when $\rho=n$. 
Moreover Lemma \ref{lem:Weyl5} gives a satisfactory result 
under the second alternative of Lemma 
\ref{lem:Weyl3} when $T_3\geq P^{3/19}$.
We therefore investigate the second alternative of Lemma
\ref{lem:Weyl3}, under the assumption that $T_3\leq P^{3/19}$ and $n\geq 29$.
Furthermore, it follows from Lemma 
\ref{lem:sq} that we may proceed under the assumption that 
$b_3/s=a_3/q$.

We will assume that  $T_3$ lies in a dyadic range
$
t_3<T_3\le 2t_3,
$ 
with 
\begin{equation}\label{eq:flight}
P^{\delta \rho/(20n)}\leq P^{\delta \rho/(16\rho+4h)}\leq t_3\leq P^{3/19}, 
\end{equation}
where the lower bound comes from 
\eqref{eq:T3lb}.
It follows from \eqref{eq:T2T3} that 
$
t_2<T_2\leq 2^{h/\rho}t_2,
$
with   $t_2=t_3^{h/\rho}.$

We can rapidly dispose of the case in which $s\le
P^3|\phi_3|$. If this happens then the second part of Lemma
\ref{lem:Weyl3}, together with Lemma~\ref{lem:W1}, yields
\[
1\ll P^{2+\ve}t_2^2|\phi_3|\ll
P^{2+\ve}t_3^{2n/(n-1)}.P^{\ve-3}t_3^8s^{-1},
\]
whence
\[1\ll P^{-2+4\ve}t_3^{16+4n/(n-1)}s^{-2}.\]
We now write $\cP_1$ for the set of $\al_3$ for which
$s\le P^3|\phi_3|$ and the value of $T_3$ lies in our dyadic range
$t_3<T_3\le 2t_3$.  Then if $\al_3\in\cP_1$ we have 
\[S(\alpha_3,\alpha_2)\ll P^nt_3^{-n}\ll 
P^{n-2+4\ve}t_3^{16+4n/(n-1)-n}s^{-2}.\] 
Moreover, since $\phi_3\ll P^{\ve-3}t_3^8s^{-1}$ we have
\[\int_{\cP_1}s^{-2}\d\al_3\ll\sum_{s\ll P^{\ve}t_3^8}s^{-2}\sum_{b_3\bmod{s}} 
P^{\ve-3}t_3^8s^{-1}\ll P^{\ve-3}t_3^8.\] 
The contribution to the minor arc integral is therefore
\[\ll P^{n-2+4\ve}t_3^{16+4n/(n-1)-n}.P^{2\ve-3}t_3^8.\]
In view of \eqref{eq:flight} this provides a satisfactory bound if
$n>24+4n/(n-1)$, so that $n\ge 29$ will be sufficient.  Thus in
applying Lemma \ref{lem:Weyl3} we may henceforth take
\beql{eq:sb}
s\gg P^{1-\ve}t_2^{-2}.
\eeq

\bigskip

We proceed to examine the case in which the
first term on the right in Lemma \ref{lem:main-poisson} dominates the
second.  Thus we will assume that
\[S(\al_3,\al_2)\ll q_0^{1/2}P^{\ve}q^{n/2}\Theta^{n/2}.\]
It follows from  \eqref{eq:Q2Q3}, \eqref{eq:al2al3} and
\eqref{eq:def-T} that $q\Theta\ll P^{5/3}$, whence
\beql{eq:sup1}
S(\al_3,\al_2)\ll q_0^{1/2}P^{5n/6+\ve}.
\eeq
We now consider the contribution 
from the set $\mathcal{P}_2$ of pairs $(\al_3,\al_2)$
for which \eqref{eq:sup1} holds and 
$t_3<T_3\le 2t_3$.  It will be convenient to write
$I_{t_3}(\minor)$ for the corresponding part of the minor arc integral.
From \eqref{eq:T3def} we will have
\beql{eq:sup2}
S(\al_3,\al_2)\ll P^nt_3^{-n}.
\eeq
As in \eqref{eq:al3meas}
the measure of the available set of points $\al_3$ is
$O(P^{2\ve-3}t_3^{16})$, whence
\begin{equation}\label{eq:measP2}
\meas(\cP_2)\ll P^{2\ve-3}t_3^{16}.
\end{equation}
Thus we certainly have
\beql{eq:me1}
I_{t_3}(\minor)\ll P^{2\ve-3}t_3^{16}.P^nt_3^{-n}.
\eeq
For an alternative bound we consider two cases.  We give ourselves a
parameter $Q_0\ge 1$, to be chosen shortly, and consider
separately the ranges $q_0\le Q_0$ and $q_0\ge Q_0$. If $q_0\le Q_0$,
the bounds \eqref{eq:sup1} and \eqref{eq:measP2} show that the
contribution to $I_{t_3}(\minor)$ is
\begin{equation}\label{eq:landing}
\ll P^{2\ve-3}t_3^{16}.Q_0^{1/2}P^{5n/6+\ve}.
\end{equation}
To investigate the second alternative we consider
the subset, $\cP_3$ say, of $\cP_2$ for
which the corresponding value of $q_0$ is at least $Q_0$.
Using the facts that $\al_3$ and $\al_2$ satisfy
\eqref{eq:al2al3}, and that $q_0$ is given by \eqref{eq:q02}, we have
\[\meas(\cP_3)\ll\sum_{\substack{q\le Q_3Q_2\\ q_0\ge Q_0}}\sum_{a_3}
\sum_{a_2}(qQ_3)^{-1}(qQ_2)^{-1}
\ll P^{-5/3}\sum_{\substack{q\le Q_3Q_2\\ q_0\ge Q_0}}\sum_{a_3}q^{-1}.\]
If we define 
\[q_3=\prod_{p\mid q_0}p\]
then $q_3\mid a_3$, so that there are $O(q/q_3)$ 
available values for $a_3$, and we deduce that
\[\meas(\cP_3)\ll P^{-5/3}\sum_{\substack{q\le Q_3Q_2\\ q_0\ge Q_0}}q_3^{-1}
\ll \sum_{q_0\ge Q_0}q_3^{-1}q_0^{-1}.\]
However a standard estimation via Rankin's method shows that
\begin{align*}
\sum_{q_0\ge Q_0}q_3^{-1}q_0^{-1}&\le Q_0^{\ve-1}\sum_{q_0\ge Q_0}q_3^{-1}q_0^{-\ve}\\
&\le Q_0^{\ve-1}\sum_{q_0=1}^{\infty}q_3^{-1}q_0^{-\ve}\\
&=Q_0^{\ve-1}\prod_p\left\{1+p^{-1-\ve}+p^{-1-2\ve}+p^{-1-3\ve}+\cdots\right\}\\
&\ll  Q_0^{\ve-1}.
\end{align*}
We therefore deduce that
\[\meas(\cP_3)\ll P^{2\ve}Q_0^{-1},\]
so that the estimate \eqref{eq:sup2} shows that the 
contribution to $I_{t_3}(\minor)$ is
\[\ll P^{2\ve}Q_0^{-1}.P^n t_3^{-n}.\]

Comparing this bound with \eqref{eq:landing} we see 
that we have an estimate
\[I_{t_3}(\minor)\ll
P^{2\ve-3}t_3^{16}.Q_0^{1/2}P^{5n/6+\ve}+P^{2\ve}Q_0^{-1}.P^n
t_3^{-n},\] 
for any $Q_0\ge 1$, covering both cases $q_0\le Q_0$ and $q_0\ge Q_0$.
We choose 
\[Q_0=1+P^{n/9+2}t_3^{-2n/3-32/3}\]
so as to balance the two terms approximately, and then deduce that 
\[I_{t_3}(\minor)\ll P^{3\ve}\{P^{5n/6-3}t_3^{16}+P^{8n/9-2}t_3^{(32-n)/3}\}.\]
We now combine this with \eqref{eq:me1} to produce
\begin{align*}
I_{t_3}(\minor)\ll P^{3\ve}\{&\min\big(P^{n-3}t_3^{16-n}\,,\,P^{5n/6-3}t_3^{16}\big)\\
&+\min\big(P^{n-3}t_3^{16-n}\,,\,P^{8n/9-2}t_3^{(32-n)/3}\big)\}.
\end{align*}
If $n\ge 16$ then we use the inequality 
$\min(A,B)\leq A^{16/n}B^{(n-16)/n}$ to conclude that
\begin{align*}
\min\big(P^{n-3}t_3^{16-n}\,,\,P^{5n/6-3}t_3^{16}\big)
&\le
P^{n-5-(n-28)/6}.
\end{align*}
If $16\le n\le 32$ 
we use $\min(A,B)\leq A^{(32-n)/(2n-16)}B^{(3n-48)/(2n-16)}$
to deduce that 
$$
\min\big(P^{n-3}t_3^{16-n}\,,\,P^{8n/9-2}t_3^{(32-n)/3}\big)
\leq P^{n-5-\eta}
$$
with
\[\eta=\frac{(n-29)(n-8)+8}{6(n-8)}.\]
These bounds make it clear that $I_{t_3}(\minor)\ll P^{n-5-\ve}$ for a
small $\ve>0$, when $29\le n\le 32$;
and if $n\ge 33$ then 
\begin{align*} 
\min\big(P^{n-3}t_3^{16-n}\,,\,P^{8n/9-2}t_3^{(32-n)/3}\big) 
&\leq P^{8n/9-2}t_3^{(32-n)/3}\\
&\leq P^{8n/9-2}\\ 
&\leq P^{n-5-2/3}. 
\end{align*}
We may therefore conclude as follows.
\begin{lemma}\label{lem:ft}
If $h=n$ and $\rho\ge n-1$ then the contribution to the minor arc
integral when the first term on the right in Lemma \ref{lem:main-poisson}
dominates the second, will be $o(P^{n-5})$, provided that $n\ge 29$.
\end{lemma}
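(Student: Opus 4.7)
The plan is to combine the Weyl-differencing output (Lemma \ref{lem:W1} together with the second alternative of Lemma \ref{lem:Weyl3}) with the Poisson bound (Lemma \ref{lem:main-poisson}) in its regime where the first term dominates, namely $|S(\al_3,\al_2)| \ll q_0^{1/2} P^{\ve} q^{n/2} \Theta^{n/2}$. Throughout I would work inside a dyadic range $t_3 < T_3 \leq 2t_3$ with $t_3 \leq P^{3/19}$, and sum over $O(\log P)$ such ranges at the end; the lower bound on $t_3$ from \eqref{eq:T3lb} plays no role here, since we already obtained satisfactory estimates for $T_3 \geq P^{3/19}$ in Lemma \ref{lem:Weyl5}.

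First I would dispose of the preliminary subcase $s \leq P^3|\phi_3|$ by pure Weyl-differencing. Coupling part (ii) of Lemma \ref{lem:Weyl3} with Lemma \ref{lem:W1} forces an inequality of the form $s^{-2} \gg P^{-2+4\ve} t_3^{16+4n/(n-1)}$, and integrating the available measure for $\al_3$ against the weighted bound $P^n t_3^{-n} s^{-2}$ gives a contribution that is $o(P^{n-5})$ as soon as $n > 24 + 4n/(n-1)$, i.e.\ for $n \geq 29$. This allows me to assume $s \gg P^{1-\ve} t_2^{-2}$ in the main argument.

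For the main case I retain two complementary bounds on $|S(\al_3,\al_2)|$: the Weyl bound $P^n t_3^{-n}$, and the Poisson bound reduced via $q\Theta \ll P^{5/3}$ (from \eqref{eq:Q2Q3}, \eqref{eq:al2al3}, \eqref{eq:def-T}) to $q_0^{1/2} P^{5n/6+\ve}$. The main obstacle is the nuisance factor $q_0^{1/2}$: unconditionally $q_0$ may be as large as $q$, in which case the Poisson bound degrades badly. I would resolve this by introducing a cutoff parameter $Q_0 \geq 1$ and splitting the integration region $\cP_2$ according to whether $q_0 \leq Q_0$ or $q_0 > Q_0$. On the first part I apply the Poisson bound directly; on the second part I use the Weyl bound combined with a Rankin-type estimate that exploits the divisibility $q_3 \mid a_3$ (where $q_3 = \prod_{p \mid q_0} p$), and some Euler-product manipulation, to yield $\meas(\cP_3) \ll P^{2\ve} Q_0^{-1}$.

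The two contributions balance when $Q_0 \asymp 1 + P^{n/9+2} t_3^{-2n/3-32/3}$, giving the hybrid estimate $I_{t_3}(\minor) \ll P^{3\ve}\bigl(P^{5n/6-3} t_3^{16} + P^{8n/9-2} t_3^{(32-n)/3}\bigr)$. Combined with the direct bound $I_{t_3}(\minor) \ll P^{n-3+2\ve} t_3^{16-n}$ from \eqref{eq:me1}, I would finish by applying $\min(A,B) \leq A^{\lambda} B^{1-\lambda}$ with interpolation exponents chosen to annihilate the $t_3$ dependence. The numerically delicate step, which pins the threshold at $n = 29$, is verifying that both resulting exponents are strictly below $n-5$; the binding constraint comes from the term involving $P^{8n/9-2}$ in the transitional regime $16 \leq n \leq 32$, where the excess over $n-5$ is $\eta = \bigl((n-29)(n-8)+8\bigr)/\bigl(6(n-8)\bigr)$, positive precisely when $n \geq 29$.
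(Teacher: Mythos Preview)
Your proposal is correct and follows essentially the same argument as the paper: the $Q_0$-splitting with the Rankin-type estimate for $\meas(\cP_3)$, the balancing choice $Q_0 = 1 + P^{n/9+2} t_3^{-2n/3-32/3}$, the resulting hybrid bound, and the interpolation against \eqref{eq:me1} all match, down to the explicit threshold computation $\eta = \bigl((n-29)(n-8)+8\bigr)\big/\bigl(6(n-8)\bigr)$. One small correction: the preliminary elimination of the subcase $s\le P^3|\phi_3|$ (yielding \eqref{eq:sb}) is carried out in the paper as setup but is not actually invoked in the first-term-dominant case---it feeds only into the later second-term case---and that preliminary step \emph{does} rely on the lower bound on $t_3$ from \eqref{eq:flight}, contrary to your opening remark.
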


\bigskip

We turn now to the pairs $(\al_3,\al_2)$ for which the second 
term on the right in Lemma \ref{lem:main-poisson}
dominates the first, so that
\[S(\alpha_3,\alpha_2)\ll  q_0^{1/2}P^{n+\ve}q^{-n/2}q_2^{n/3}.\]
We now recall that we may assume that $a_3/q=b_3/s$ with 
$\gcd(s,b_3)=1$.  Thus in particular we will have $s\mid q$.
In view of the definitions \eqref{eq:q02} we therefore deduce that 
\begin{align*}
q_0^{1/2}q^{-n/2}q_2^{n/3}&=q_0^{(1-n)/2}q_1^{-n/2}q_2^{-n/6}\\
&\le 
(q_0q_1q_2)^{-(n+4)/6}q_2^{2/3}\\
&=q^{-(n+4)/6}q_2^{2/3}\\
&\le s^{-(n+4)/6}q_2^{2/3}
\end{align*}
as soon as $n\ge 4$.
Moreover, if $p^e\|q_2$ then we have $p^e\mid qb_3=a_3s$ and $p^{1+v}\nmid a_3$,
whence $p^e\mid sp^v$. It follows that $q_2\mid Ds$, with 
\[D=2\prod_{i=1}^{n-1} d_i\] 
for the coefficients $d_i$ in \eqref{eq:didef}.   
Now let $\cP_4$ be the set of pairs $(\al_3,\al_2)$ for
which the corresponding values of $s, q_2$ and $T_3$ lie in given 
dyadic ranges $S< s\le 2S$, $Q_2<q_2\le 2Q_2$ and $t_3<T_3\le 2t_3$, so
that
\[S(\alpha_3,\alpha_2)\ll  P^{n+\ve}S^{-(n+4)/6}Q_2^{2/3}\]
on $\cP_4$.  Since $q_2\mid Ds$ and $D\ll 1$ there are $O(S/q_2)$ 
choices for $s$, given $q_2$. We have $s|\phi_3|\ll P^{\ve-3}t_3^8$ 
by Lemma \ref{lem:W1} and so we may calculate that 
\begin{align*}
\meas(\cP_4)&\ll \sum_{Q_2<q_2\le 2Q_2}\sum_{S<s\le
  2S}\sum_{b_3\bmod{s}}P^{\ve-3}s^{-1}t_3^{8}\\
&\ll \sum_{Q_2<q_2\le 2Q_2}P^{\ve-3}t_3^{8}Sq_2^{-1}\\
&\ll P^{\ve-3}t_3^{8}SQ_2^{-2/3},
\end{align*}
since $q_2$ runs over cube-full numbers. This yields the bound
\beql{eq:B2}
\int_{\cP_4}|S(\alpha_3,\alpha_2)|\d\al_3\d \al_2  
\ll  P^{n-3+\ve}t_3^8S^{-(n-2)/6}.
\eeq
Alternatively, \eqref{eq:sup2} produces
\[\int_{\cP_4}|S(\alpha_3,\alpha_2)|\d\al_3\d \al_2  
\ll P^{n}t_3^{-n}\meas(\cP_4)\ll P^{n-3+\ve}t_3^{8-n}S.\]
We may combine these to give
\begin{align*}
\int_{\cP_4}&|S(\alpha_3,\alpha_2)|\d\al_3\d \al_2\\  
&\ll P^{n-3+\ve}\min\left(t_3^8S^{-(n-2)/6}\,,\,t_3^{8-n}S\right)\\
&\ll P^{n-3+\ve}\left(t_3^8S^{-(n-2)/6}\right)^{6/(n+4)}
\left(t_3^{8-n}S\right)^{(n-2)/(n+4)}\\
&=P^{n-3+\ve}t_3^{-\kappa_1},
\end{align*}
with
\[\kappa_1=\frac{n^2-10n-32}{n+4}.\]

We may also couple \eqref{eq:B2} with \eqref{eq:sb} to produce a bound
\[\ll  P^{n-3-(n-2)/6+n\ve}t_3^8t_2^{(n-2)/3}\ll 
P^{n-3-(n-2)/6+n\ve}t_3^{\kappa_2},\]
with
\[\kappa_2= 8+\frac{(n-2)n}{3n-3}.\] 
Comparing this with the previous bound we deduce that
\begin{align*}
\int_{\cP_2}|
&S(\alpha_3,\alpha_2)|\d\al_3\d \al_2\\  
&\ll P^{n\ve}\min\left(P^{n-3}t_3^{-\kappa_1}\,,\,
P^{n-3-(n-2)/6}t_3^{\kappa_2}\right)\\
&\le P^{n\ve}\min\left(P^{n-3}t_3^{-\kappa_1}\right)^{(n-14)/(n-2)}
\left(P^{n-3-(n-2)/6}t_3^{\kappa_2}\right)^{12/(n-2)}\\
&=P^{n-5+n\ve}t_3^{-\kappa}
\end{align*}
with
\begin{align*}
\kappa&=\frac{n-14}{n-2}\kappa_1-
\frac{12}{n-2}\kappa_2\\
&=\left(\frac{n-14}{n-2}\right)\left(\frac{n^2-10n-32}{n+4}\right)-
\frac{12}{n-2}\left(8+\frac{(n-2)n}{3n-3}\right).
\end{align*}
A slightly unpleasant calculation confirms that $\kappa>0$ whenever
$n\ge 28$.  This completes our treatment of the minor arcs, which we
summarize as follows.
\begin{lemma}\label{lem:minarcest}
If $h=n$ and $\rho\ge n-1$ then the claimed minor arc estimate
\eqref{eq:minor} holds as soon as $n\ge 29$.
\end{lemma}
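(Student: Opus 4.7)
The plan is to partition the minor arcs by which alternative of Lemma \ref{lem:Weyl3} is triggered, and within each case by the dyadic size of $T_3$, and then invoke the estimates already at our disposal. With $h=n\ge 29$ and $\rho\ge n-1\ge 28$, the hypothesis $(h-24)(\rho-4)\ge 5\cdot 24=120>96$ of Lemma \ref{lem:Weyl4} is satisfied, so the contribution from pairs at which the first alternative of Lemma \ref{lem:Weyl3} holds is already $o(P^{n-5})$. For the second alternative, Lemma \ref{lem:Weyl5} handles all pairs with $T_3\ge P^{3/19}$. Thus I am reduced to estimating the contribution from those $(\al_3,\al_2)\in\minor$ at which the second alternative of Lemma \ref{lem:Weyl3} holds and $T_3$ lies in a dyadic window $t_3<T_3\le 2t_3$ with $P^{\delta\rho/(20n)}\ll t_3\le P^{3/19}$, the lower bound coming from \eqref{eq:T3lb}.

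In this remaining range Lemma \ref{lem:sq} guarantees $a_3/q=b_3/s$, so the Dirichlet approximation feeding into Poisson summation coincides with the Weyl approximation, and Lemma \ref{lem:main-poisson} becomes available. A preliminary manoeuvre disposes of the subcase $s\le P^3|\phi_3|$: combining the second alternative of Lemma \ref{lem:Weyl3} with Lemma \ref{lem:W1} produces a lower bound of the shape $1\ll P^{-2+4\ve}t_3^{16+4n/(n-1)}s^{-2}$, and integrating the trivial Weyl bound $P^nT_3^{-n}$ against the $s^{-2}$ weight (using $|\phi_3|\ll P^{\ve-3}t_3^8 s^{-1}$ from Lemma \ref{lem:W1} to control the measure) gives a satisfactory contribution for $n\ge 29$. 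Hence I may assume henceforth that $s\gg P^{1-\ve}t_2^{-2}$, with $t_2=t_3^{n/\rho}$.

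It remains to apply Lemma \ref{lem:main-poisson} and to split according to which of its two terms dominates. If the first term dominates, the bound $q\Theta\ll P^{5/3}$ gives $S(\al_3,\al_2)\ll q_0^{1/2}P^{5n/6+\ve}$; introducing a dyadic cutoff $Q_0$ for $q_0$, bounding the measure of $\{q_0\ge Q_0\}$ by a Rankin-style argument (exploiting $q_3\mid a_3$, where $q_3$ is the radical of $q_0$), and optimising against both the trivial Weyl bound $P^nt_3^{-n}$ and the measure estimate \eqref{eq:al3meas} then delivers $o(P^{n-5})$ when $n\ge 29$. If instead the second term dominates, the divisibility $q_2\mid Ds$ (which follows from $a_3/q=b_3/s$ together with the definition \eqref{eq:q02}) yields a bound of the shape $S\ll P^{n+\ve}s^{-(n+4)/6}q_2^{2/3}$; this must be integrated over $(\al_3,\al_2)$ in the appropriate dyadic region and then interpolated against both the trivial Weyl bound and the lower bound $s\gg P^{1-\ve}t_2^{-2}$. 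The main obstacle is this final step: several interpolation inequalities must simultaneously hold in our favour, the exponents sit on a knife edge, and the threshold $n\ge 29$ is forced by the tightest of them.
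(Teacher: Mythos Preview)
Your proposal is correct and follows essentially the same route as the paper's own argument: invoke Lemmas~\ref{lem:Weyl4} and~\ref{lem:Weyl5} to reduce to the second alternative with $T_3\le P^{3/19}$, use Lemma~\ref{lem:sq} to align the approximations, dispose of the subcase $s\le P^3|\phi_3|$, and then split Lemma~\ref{lem:main-poisson} by dominant term, handling the first via a dyadic $Q_0$-cutoff on $q_0$ with a Rankin estimate and the second via the divisibility $q_2\mid Ds$ combined with interpolation against the trivial Weyl bound and the lower bound \eqref{eq:sb}. The paper does exactly this, with the explicit exponent computations (which you defer) confirming that $n\ge 29$ suffices at each stage.
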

This will suffice for our application to Theorem \ref{main}.

\section{Major arc contribution}\label{s:5}

The purpose of this section is to
complete the proof of Theorems \ref{main'} and \ref{main}, by 
establishing
\eqref{eq:major} under suitable hypotheses on $\major$ and the forms
$C$ and $Q$.  
In what follows we will put $h=n$ if $C$ is non-singular and $h=h(C)$
otherwise. Moreover, 
we continue to adopt the notation 
$$
\rho=\rank(Q),
$$
for the rank of the quadratic form $Q$. By Corollary \ref{c:3.2} we
have $\rho\geq n-1$ when the intersection $C=Q=0$ is non-singular.

It is now time to reveal the weight functions $\omega$ that we shall
use in the definition \eqref{eq:def-N} of our counting function 
$$
N_{\omega}(X;P)=\sum_{\substack{\x\in \ZZ^n\\ C(\x)=Q(\x)=0}} \omega(\x/P).
$$
There is nothing to prove unless the variety $X$ contains a
non-singular real point.  
Consequently, we
let $\x_0\in \RR^n$ be a non-zero vector such that $C(\x_0)=Q(\x_0)=0$ and
$\nabla C (\x_0)$ is not proportional to $\nabla Q(\x_0)$. 
We will find it convenient to work with a weight function 
that forces us to count points lying very close to $\x_0$. 
For any $\xi\in (0,1]$, we define the function $\omega: \RR^n 
\rightarrow 
\RR_{\geq 0}$ by 
$$
\omega(\x):=\nu\left(\xi^{-1}\|\x-\x_0\|\right),
$$
where $\|\y\|=\sqrt{y_1^2+\cdots+y_n^2}$ and 
$$
\nu(x)=\begin{cases}
e^{-1/(1-x^2)}, & \mbox{if $|x|<1$},\\
0, & \mbox{if $|x|\geq 1$}.
\end{cases}
$$
We will require  $\xi$ to be 
sufficiently small, with  $1\ll \xi \leq 1$.
It is clear that $\omega$ is infinitely
differentiable, and that it is supported on the region
$|\x-\x_0| \leq \xi$.   
Moreover, there exist
constants $c_j>0$ depending only on $j$ and $\xi$ such that
$$
\max\Big\{ \Big|
\frac{\partial^{j_1+\cdots+j_n}\omega (\x)}{\partial^{j_1}x_1\cdots
\partial^{j_n}x_n}\Big|: ~\x \in \RR^n, ~j_1+\cdots+j_n=j\Big\}\leq c_j,
$$
for each integer $j \geq 0$.

We are now ready to begin our  analysis of the exponential sums 
$S(\al_3,\al_2)$ on the set of major arcs
$\mathfrak{M}$  defined in \S \ref{s:2},  for $\delta\in (0,\frac{1}{3}).$
Let us define 
$$
S(\a,q):=\sum_{\y\bmod{q}}e_q\big(a_3C(\y)+a_2Q(\y)\big),
$$
for $\a=(a_3,a_2)$ with $\gcd(q,\a)=1$.
Our work in this section will lead us to study the truncated singular series
\begin{equation}\label{21-sing}
\ss(R)=\sum_{q\leq R} \frac{1}{q^n}\sum_{\substack{\a \bmod{q}\\ 
\gcd(q,\a)=1}}
S(\a,q),
\end{equation}
for any $R>1$. 
We put  $\ss=\lim_{R\rightarrow \infty}\ss(R)$, whenever this limit exists.
Next, let
\begin{equation}
  \label{21-si}
\mathfrak{I}(R)=\int_{-R}^{R} \int_{-R}^{R} 
\int_{\RR^n} \omega(\x)
e\big(\gamma_3 C(\x)+\gamma_2 Q(\x)\big)\d\x\d \gamma_3\d \gamma_2,
\end{equation}
for any $R>0$. We put  $\mathfrak{I}=\lim_{R\rightarrow \infty}
\mathfrak{I}(R)$, whenever the limit  exists.  
The main aim of this section is to establish the
following result.

\begin{lemma}\label{lem:major}
Assume that $(h-24)(\rho-4)>96$.   Then the singular series 
$\ss$ and the singular integral $\mathfrak{I}$ are absolutely 
convergent.  Moreover, if we choose
\beql{eq:choosedelta}
\delta=1/7 
\eeq
then there is a positive constant $\Delta$ such that
$$
\iint_{\major}S(\alpha_3,\alpha_2)\d\alpha_3\d\alpha_2=\ss \mathfrak{I}P^{n-5}
+O(P^{n-5-\Delta}).
$$
\end{lemma}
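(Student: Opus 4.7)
I follow the classical major arc analysis. On each arc $\major_{q,\a}$ one has $q\le P^{\delta}$ and $|\theta_i|\le P^{-i+\delta}$, whence $\Theta\ll P^{\delta}$ and $P^{\ve-1}q\Theta\ll P^{2\delta+\ve-1}<1$ since $\delta<1/3$. Lemma~\ref{lem:int-tail} then annihilates every $\m\neq\0$ term in the decomposition of Lemma~\ref{lem:p_sum}, leaving
\begin{equation*}
S(\alpha_3,\alpha_2)=\frac{P^n}{q^n}S(\a,q)\,I(\theta_3P^3,\theta_2P^2;\0)+O(P^{-N})
\end{equation*}
for any $N>0$. The substitution $\gamma_i=P^i\theta_i$ (Jacobian $P^{-5}$) on $\major_{q,\a}$, followed by summation over $(\a,q)$ with $q\le P^{\delta}$, then yields
\begin{equation*}
\iint_{\major}S\,d\alpha_3d\alpha_2=P^{n-5}\ss(P^\delta)\,\mathfrak{I}(P^\delta)+O(P^{-A})
\end{equation*}
for any $A>0$. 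The task is thus reduced to establishing absolute convergence of $\ss$ and $\mathfrak{I}$, with polynomial decay of the tails.

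For the singular series I would show $q^{-n}\sum_{\a}^{*}|S(\a,q)|\ll q^{-1-\eta}$ for some $\eta>0$. By multiplicativity~\eqref{eq:mult} this factors across the decomposition $q=q_0q_1q_2$ of~\eqref{eq:q02}; Lemma~\ref{lem:bad-prime} handles $q_0$, Lemma~\ref{lem:sq-free} handles $q_1$, and the cube-full part $q_2$ is treated by adapting the Weyl differencing of Section~\ref{s:4} to complete exponential sums (i.e.\ dropping the weight $\omega$ and summing over residue classes modulo $q_2$). The hypothesis $(h-24)(\rho-4)>96$ is exactly what makes the resulting series converge, mirroring the minor arc analysis of Lemma~\ref{lem:Weyl4}.

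For the singular integral, the hypothesis that $\x_0$ is a non-singular real point of $X$ ensures $\nabla C(\x_0)$ and $\nabla Q(\x_0)$ are linearly independent. Choosing $\xi$ small enough that $\supp(\omega)$ lies in a neighbourhood where this independence persists uniformly gives
\begin{equation*}
|\gamma_3\nabla C(\x)+\gamma_2\nabla Q(\x)|\gg\max(|\gamma_3|,|\gamma_2|)
\end{equation*}
on $\supp(\omega)$. Repeated integration by parts then yields $|I(\gamma_3,\gamma_2;\0)|\ll_N\max(|\gamma_3|,|\gamma_2|)^{-N}$ for any $N$, whence $\mathfrak{I}(R)$ converges absolutely and $\mathfrak{I}-\mathfrak{I}(R)=O(R^{-\eta'})$ for some $\eta'>0$.

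Combining these estimates, $\ss(P^\delta)=\ss+O(P^{-\eta\delta})$ and $\mathfrak{I}(P^\delta)=\mathfrak{I}+O(P^{-\eta'\delta})$, so inserting into the formula above yields the claimed asymptotic with $\Delta=\delta\min(\eta,\eta')>0$; the specific value $\delta=1/7$ is dictated by the minor arc estimates rather than by the major arc analysis. The main obstacle is the singular series convergence: one needs Weyl-type savings for the complete sums $|S(\a,q)|$ well beyond the trivial bound, and the cube-full case is especially delicate, requiring the full force of the combined cubic-quadratic differencing of Section~\ref{s:4} implemented for complete sums (so that both the Birch-type count $n(R)$ and the van der Corput-style $Q$-step must be reworked in the residue-class setting) to match the exponent saving encoded in $(h-24)(\rho-4)>96$.
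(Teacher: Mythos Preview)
Your singular integral argument is correct and in fact cleaner than the paper's: writing the phase as $|\bga|(\hat\gamma_3 C+\hat\gamma_2 Q)$ with $(\hat\gamma_3,\hat\gamma_2)$ on the unit circle, the linear independence of $\nabla C(\x_0)$ and $\nabla Q(\x_0)$ gives a uniform lower bound on the gradient over $\supp(\omega)$ (for $\xi$ small), and non-stationary phase then yields $I(\bga;\0)\ll_N|\bga|^{-N}$. The paper instead proves only the weaker bounds $I\ll|\gamma_3|^{-h/8}|\bga|^\ve$ and $I\ll(|\gamma_2|/(1+|\gamma_3|))^{-\rho/2}|\bga|^\ve$ via an indirect route.

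The genuine gap is in your singular series analysis. You invoke the decomposition $q=q_0q_1q_2$ of \eqref{eq:q02} together with Lemmas~\ref{lem:bad-prime} and \ref{lem:sq-free}, but these live in Section~\ref{s:7}, where $C$ is assumed non-singular and $Q$ diagonal (the Deligne bound for $q_1$ needs the leading cubic part non-singular mod $p$; the very definition of $q_0$ uses the diagonal coefficients $d_i$). Lemma~\ref{lem:major}, however, must hold under the general hypothesis $(h-24)(\rho-4)>96$, since it is used to prove Theorem~\ref{main'} where one has only $h=h(C)$ and $\rho=\rank(Q)$. Even in the non-singular setting those lemmas are tailored to the Poisson minor arc bound and do not directly give what you need at $\m=\0$; and your residual proposal to ``rework the Birch and van der Corput steps in the residue-class setting'' is precisely the hard part you have not carried out.

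The paper's device here is worth noting: rather than redoing Weyl differencing for complete sums, it feeds the \emph{same} smooth Weyl lemmas (Lemmas~\ref{lem:W1} and \ref{lem:Weyl3}) back into the problem. One takes $\al_i=a_i/q$, chooses $P=q^{8n}$ artificially large, and inverts the approximation \eqref{eq:train1} to get $S(\a,q)\ll 1+q^nT_3^{-h}=1+q^nT_2^{-\rho}$. Applying Lemma~\ref{lem:W1} then forces $T_3^8\gg P^{-\ve}q/\gcd(q,a_3)$, and Lemma~\ref{lem:Weyl3} forces $T_2^2\gg\gcd(q,a_3)$, yielding the two complementary bounds
\[
S(\a,q)\ll q^{n+\ve}\big(q/\gcd(q,a_3)\big)^{-h/8},\qquad
S(\a,q)\ll q^{n+\ve}\gcd(q,a_3)^{-\rho/2}.
\]
Interpolating and summing over $a_3$ with $\gcd(q,a_3)=d$ gives $\sum_{\a}^*|S(\a,q)|\ll q^{n+2+\ve-\xi}$ with $\xi=h(\rho+2)/(8\rho+2h)$, and one checks $\xi>3$ is exactly $(h-24)(\rho-4)>96$. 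This is how the hypothesis enters the singular series, and it requires no non-singularity assumption on $C$ or $Q$.
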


Taking the statement of Lemma \ref{lem:major} on faith, let us
indicate how it can be used to complete the proof of Theorems
\ref{main'} and \ref{main}. 
In the context of Theorem \ref{main}, for which $h=n\geq 29$ and
$\rho\geq n-1$, we combine Lemma \ref{lem:minarcest} and Lemma
\ref{lem:major} to deduce that  
$$
N_{\omega}(X;P)=\ss\mathfrak{I}P^{n-5}+o(P^{n-5}), 
$$
as $P\rightarrow \infty$, with both 
$\ss$ and  $\mathfrak{I}$  absolutely 
convergent.   The same asymptotic formula holds when
$(h(C)-32)(\rho-4)>128$, as in Theorem~\ref{main'}. 
This  can be seen by combining  Lemma \ref{lem:Weyl6} with
Lemma~\ref{lem:major}.   

In order to complete the proof of Theorems \ref{main'} and \ref{main}
we need to show that  
$\ss \mathfrak{I}>0$ whenever $\Xns(\mathbf{A})\neq \emptyset$.
Indeed, if $\ss \mathfrak{I}>0$ for any $[\x_0]\in\Xns(\RR)$ 
then it will follow from our asymptotic formula 
for $N_{\omega}(X;P)$ that $X(\QQ)$ is Zariski-dense in $X$, 
whence the existence of a point in $\Xns(\QQ)$ is assured. 
The proof that $\ss>0$ follows a standard line of reasoning, as in
\cite[Lemma~7.1]{birch}, and makes use of the fact that $\ss$ is 
absolutely convergent.
To show that $\mathfrak{I}>0$, it will suffice to 
show that $\mathfrak{I}(R)\gg 1$ for sufficiently large values of $R$.
This again is standard and will follow from an easy adaptation 
of work of Heath-Brown 
\cite[\S 10] {hb-10} on the corresponding problem for a single cubic form.
The only difference lies in the choice of weights used
and the fact that we now have a complete intersection of codimension $2$,
but neither of these alters the
nature of the proof.  
Performing the integrations over $\gamma_3$ and $\gamma_2$, and writing 
$\x=\x_0+\y$, it follows from \eqref{21-si} that
\begin{align*}
\mathfrak{I}(R)
&=\int_{\RR^n} \omega(\x) \frac{\sin(2\pi R C(\x))   \sin(2\pi R
  Q(\x))}{\pi^2 C(\x)Q(\x)} 
\d\x\\
&=\int_{\RR^n} \nu(\xi^{-1}\|\y\|)  
\frac{\sin(2\pi R C(\x_0+\y))\sin(2\pi R Q(\x_0+\y))}{\pi^2 C(\x_0+\y)Q(\x_0+\y)}
\d\y.
\end{align*}
Let $a_i=\partial C/\partial x_i(\x_0)$ and 
$b_i=\partial Q/\partial x_i(\x_0)$ for $1\leq i\leq n$. 
We may assume 
 without loss of generality that 
$a_1b_2-a_2b_1\neq 0$. 
The need for $\xi>0$ to be sufficiently small 
emerges through an application of the inverse function
theorem. Since $|\y|\leq \xi$, if we write   
\begin{align*}
z_3&=C(\x_0+\y)=a_1y_1+\cdots+a_ny_n +P_2(\y)+P_3(\y),\\
z_2&=Q(\x_0+\y)=b_1y_1+\cdots+b_ny_n +Q_2(\y),
\end{align*}
for forms $P_i$ of degree $i$ and $Q_2$ of degree $2$, 
then $z_3,z_2\ll \xi$ and we can 
invert this expression to represent $y_1$ and $y_2$ as a power series in 
$z_3,z_2,y_3,\ldots,y_n$, if $\xi$ is sufficiently small. We refer the
reader to \cite{hb-10} 
for the remainder of the argument.

\bigskip

To prove  Lemma \ref{lem:major} we begin by recalling that 
$q\leq P^{\delta}$, that we have $\a=(a_3,a_2)$ with $\gcd(q,\a)=1$, and that
$(\al_3,\alpha_2)\in \major_{\a,q}$, with 
$\al_i=a_i/q+\theta_i,$ for $i=3,2$.  We will use the argument of
\cite[Lemma~5.1]{birch} to show that
\begin{equation}
  \label{eq:train1}
S(\al_3,\alpha_2)=q^{-n}P^n S(\a,q) I(\theta_3 P^3,\theta_2
P^2;\ma{0}) +O(P^{n-1+\delta}),   
\end{equation}
where $S(\a,q)$ is given above and $I$ is given by \eqref{eq:Iq}.

To see this we write $\x=\y+q\z$ in \eqref{eq:S}, where $\y$
runs over a complete set of residues modulo $q$, giving
\begin{equation}
  \label{eq:train2}
S(\al_3,\al_2)=
\sum_{\y \bmod{q}}e_q\big(a_3C(\y)+a_2Q(\y)\big)
\sum_{\z\in\ZZ^n}f(\z),
\end{equation}
with
$$
f(\z)=\omega\left(\frac{\y+q\z}{P}\right)
e\left(\theta_3 C(\y+q\z)+\theta_2 Q(\y+q\z)\right).
$$
We now want to replace the summation over $\z$ by an integration. 
If $\t\in [0,1]^n$ then 
\[f(\z+\t)=f(\z)+O(\max_{\ma{u}\in [0,1]^n} |\nabla f(\z+\ma{u})|).\]
Hence
\begin{align*}
\Big|\int_{\RR^n}f(\z)\d\z-\sum_{\z\in\ZZ^n}f(\z)\Big|
&\ll \meas (\mathcal{B})
\max_{\z\in \mathcal{B}}|\nabla f(\z)|\\
&\ll \Big(\frac{P}{q}\Big)^n \big(q/P+q|\theta_3|P^2+q|\theta_2|P)\\
&=
q^{1-n}P^{n-1}+|\theta_3|q^{1-n}P^{n+2}+|\theta_2|q^{1-n}P^{n+1},
\end{align*}
where $\mathcal{B}$ is an $n$-dimensional cube with sides of order
$1+P/q\leq 2P/q$. 
Substituting this
into \eqref{eq:train2}
and  making the change of variables $P\u=\y+q\z$, 
 we therefore deduce that
\begin{equation}
  \label{eq:train3}
\begin{split}
  S(\al_3,\alpha_2)=~&q^{-n}P^n S(\a,q) I(\theta_3 P^3,\theta_2 P^2;\ma{0}) 
 \\ 
&  +O(qP^{n-1}+|\theta_3|qP^{n+2}+|\theta_2|qP^{n+1}).
\end{split}
\end{equation}
This completes the proof of \eqref{eq:train1}, since
$|\theta_i|\leq P^{-i+\delta}$ and $q\leq P^\delta$ on the major 
arcs. 

Using \eqref{eq:train1}, and noting that the major arcs have measure
$O(P^{-5+5\delta})$, it is now easy to deduce that 
\begin{equation}\label{21-maj}
\iint_{\major} S(\al_3,\alpha_2)\d\al_3\d\alpha_2=
P^{n-5}\mathfrak{S}(P^\delta)\mathfrak{I}(P^\delta) +O(
P^{n-6+6\delta}), 
\end{equation}
where $\mathfrak{S}(P^\delta)$ is given by \eqref{21-sing}, and
$\mathfrak{I}(P^\delta)$ is given by \eqref{21-si}.

We proceed to use \eqref{eq:train1} in conjunction with our Weyl
estimates, Lemmas \ref{lem:W1} and \ref{lem:Weyl3}, to bound
$S(\a,q)$, with the aim of proving the following result.
\begin{lemma}\label{lem:completeWeyl}
Let $\ve>0$ be given. 
If $h$ and $\rho$ are both  positive then
\beql{eq:Saq}
S(\a,q)\ll q^{n+\ve}\left(\frac{q}{\gcd(q,a_3)}\right)^{-h/8}
\eeq
and
\beql{eq:Saq'}
S(\a,q)\ll q^{n+\ve}\gcd(q,a_3)^{-\rho/2}.
\eeq
\end{lemma}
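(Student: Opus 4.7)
The plan is to reduce both bounds to the Weyl estimates of Lemmas~\ref{lem:W1} and~\ref{lem:Weyl3} applied at the single point $(\al_3,\al_2)=(a_3/q,a_2/q)$. The crucial link is the identity \eqref{eq:train1}, which was derived for any $q$ and $\a$ with $\gcd(q,\a)=1$ provided $q\le P^{\delta}$: it reads
$$
S(a_3/q,a_2/q)=q^{-n}P^{n}S(\a,q)I(0,0;\mathbf{0})+O(P^{n-1+\delta}).
$$
Since $I(0,0;\mathbf{0})=\int_{\RR^{n}}\omega(\x)\,\d\x$ is a fixed positive constant depending only on the weight, this identity converts a pointwise bound for $|S(a_3/q,a_2/q)|$ into one for $|S(\a,q)|$. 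I would therefore choose $P$ to be a large fixed power of $q$, say $P=q^{A}$ with $A$ large enough that both $q\le P^{\delta}$ holds and the error term $q^{n}P^{-1+\delta}$ is negligible compared with the target bound. Throughout, write $g=\gcd(q,a_3)$.

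For \eqref{eq:Saq}, apply Lemma~\ref{lem:W1} directly, defining $T_3$ by $|S(a_3/q,a_2/q)|=P^{n}T_3^{-h}$. The lemma supplies an approximation $a_3/q=b_3/s+\phi_3$ with $\gcd(s,b_3)=1$ and $s(1+P^{3}|\phi_3|)\ll P^{\ve}T_3^{8}$. If $\phi_3=0$, uniqueness of reduced fractions forces $s=q/g$, so at once $q/g\ll P^{\ve}T_3^{8}$. If $\phi_3\neq 0$, then $|\phi_3|\ge 1/(qs)$ gives $P^{3}/q\ll P^{\ve}T_3^{8}$, which for $P=q^{A}$ with $A\ge 1$ is an even stronger lower bound on $T_3^{8}$ than $q/g$. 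In either case $T_3^{8}\gg q/g$ up to a factor of $P^{\ve}$, so that $|S(a_3/q,a_2/q)|\ll P^{n+h\ve/8}(q/g)^{-h/8}$, and \eqref{eq:Saq} follows after rescaling $\ve$.

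For \eqref{eq:Saq'}, apply Lemma~\ref{lem:Weyl3} with the explicit approximation $s=q/g$, $b_3=a_3/g$, $\phi_3=0$, which is admissible because $\gcd(q/g,a_3/g)=1$. Defining $T_2$ by $|S(a_3/q,a_2/q)|=P^{n}T_2^{-\rho}$, inspect the two alternatives. Alternative (ii) at once gives $T_2^{2}\gg P^{1-\ve}g/q$, which exceeds $g$ for $P$ any sufficiently large power of $q$. In alternative (i) one gets a positive integer $u\ll T_2^{2}$ with $\|su\al_2\|=\|ua_2/g\|\ll P^{-2+\ve}(q/g)T_2^{2}$. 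The hypothesis $\gcd(q,\a)=1$ forces $\gcd(g,a_2)=1$, so $\|ua_2/g\|\ge 1/g$ whenever $g\nmid u$. Hence either $g\mid u$, giving $u\ge g$ and therefore $T_2^{2}\gg g$, or the inequality forces $T_2^{2}\gg P^{2-\ve}/q$, which again dwarfs $g$ for $P=q^{A}$ with $A$ large. In every case $T_2^{2}\gg g$ up to negligible powers of $P$, and \eqref{eq:Saq'} follows.

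The main obstacle is careful bookkeeping rather than any new idea: one must verify that for $P=q^{A}$ the various $P^{\ve}$ factors absorb cleanly into $q^{\ve}$ after redefining $\ve$; that the error $O(q^{n}P^{-1+\delta})$ from \eqref{eq:train1} is dominated by the target bound (weakest when $g=1$ for \eqref{eq:Saq} and when $g$ is close to $q$ for \eqref{eq:Saq'}); and, most subtly, that the arithmetic fact $\gcd(g,a_2)=1$ is invoked at the precise step where alternative (i) of Lemma~\ref{lem:Weyl3} would otherwise leave $u$ too small to produce the required lower bound on $T_2$.
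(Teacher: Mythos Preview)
Your proposal is correct and follows essentially the same route as the paper: specialise \eqref{eq:train1} at $(\al_3,\al_2)=(a_3/q,a_2/q)$ with $P$ a large power of $q$ (the paper takes $P=q^{8n}$), then feed Lemmas~\ref{lem:W1} and~\ref{lem:Weyl3} into the resulting bound $S(\a,q)\ll 1+q^nT_3^{-h}=1+q^nT_2^{-\rho}$. The only cosmetic difference is that for \eqref{eq:Saq'} you apply Lemma~\ref{lem:Weyl3} directly with the explicit choice $s=q/g$, $\phi_3=0$, whereas the paper first invokes Lemma~\ref{lem:W1} and argues that one may assume $a_3/q=b_3/s$; your shortcut is legitimate since Lemma~\ref{lem:Weyl3} only requires \emph{some} representation \eqref{eq:t3def} with $\gcd(s,b_3)=1$.
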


\begin{proof}
To prove this, we reverse our normal point of view, and
think of $q$ as given and of $P$ as being large in terms of $q$.
Specifically it will suffice to take
\beql{eq:Plb}
P=q^{8n}.
\eeq
When
$\theta_3=\theta_2=0$ we have $I(0,0;\ma{0})\gg 1$, whence
\eqref{eq:train1} yields
\begin{equation}\begin{split}\label{eq:STb}
S(\a,q)&\ll 1+q^nP^{-n}|S(a_3/q,a_2/q)|\\
&=1+q^nT_3^{-h}=1+q^nT_2^{-\rho},
\end{split}\end{equation}
since \eqref{eq:Plb} shows that $P^{1-\delta}\gg q^n$.

We proceed to apply Lemma
\ref{lem:W1}, bearing in mind that the integer $s$ is not necessarily
equal to $q$.  Thus we have $a_3/q=b_3/s+\phi_3$ and
\beql{eq:st3}
s(1+P^3|\phi_3|)\ll P^{\ve}T_3^8.
\eeq
If $a_3/q\not=b_3/s$ then $|\phi_3|\ge (sq)^{-1}$, whence
\[T_3^8\gg P^{3-\ve}s|\phi_3|\ge P^{3-\ve}q^{-1}.\]
Then, taking $\ve<1$, we see that \eqref{eq:STb} leads to the estimate
\[S(\a,q)\ll 1+q^{n}T_3^{-1}\ll 1+q^n.P^{(\ve-3)/8}q^{1/8}\ll
1+q^{n+1}P^{-1/4}\ll 1\]
in view of \eqref{eq:Plb}. This is more than sufficient for the
lemma, and so we henceforth assume that $a_3/q=b_3/s$ and
that $\phi_3=0$. Thus $sa_3=qb_3$ with $\gcd(s,b_3)=1$, whence 
$s=q/\gcd(q,a_3)$.  Moreover \eqref{eq:st3} reduces to $s\ll
P^{\ve}T_3^8$, so that $T_3\gg P^{-\ve/8}(q/\gcd(q,a_3))^{1/8}$.
Inserting this into \eqref{eq:STb} leads to the estimate
\[S(\a,q)\ll 1+q^n.P^{\ve h/8}\left(\frac{q}{\gcd(q,a_3)}\right)^{-h/8}.\]
This is suitable for \eqref{eq:Saq}, given our choice
\eqref{eq:Plb}, on re-defining $\ve$.

To obtain \eqref{eq:Saq'} we apply Lemma \ref{lem:Weyl3} which 
either shows that
\[T_2^2\gg P^{1-\ve}s^{-1}\ge P^{1-\ve}q^{-1},\]
or produces a positive integer $u\ll T_2^2$ for which
\beql{eq:a2}
\|sua_2/q\|\ll P^{-2+\ve}sT_2^2.
\eeq
If the first alternative holds then, taking $\ve<1/2$, 
we find that \eqref{eq:STb} produces a bound
\[S(\a,q)\ll 1+q^{n}T_2^{-1}\ll 1+q^n.P^{(\ve-1)/2}q^{1/2}\ll
1+q^{n+1}P^{-1/4}\ll 1,\]
in view of \eqref{eq:Plb}. Again, this is more than sufficient for the
lemma, and so we examine the second alternative.

If  $q\nmid sua_2$ the bound \eqref{eq:a2} would imply that $q^{-1}\ll
P^{-2+\ve}sT_2^2$, so that
\[T_2^2\gg P^{2-\ve}(sq)^{-1}\ge P^{2-\ve}q^{-2}.\]
Just as above this would produce an acceptable estimate
\[S(\a,q)\ll 1+q^{n}T_2^{-1}\ll 1+q^n.P^{(\ve-2)/2}q\ll
1+q^{n+1}P^{-1/4}\ll 1.\]
On the other hand, if $q\mid sua_2$ then $\gcd(q,a_3)\mid ua_2$, since
we have $s=q/\gcd(q,a_3)$, as noted above. Recalling that $\gcd(q,a_3,a_2)=1$
we deduce that $\gcd(q,a_3)\mid u$, so that
$\gcd(q,a_3)\le u\ll T_2^2$.  Thus \eqref{eq:STb} yields
\[S(\a,q)\ll 1+q^n\gcd(q,a_3)^{-\rho/2},\]
as required for \eqref{eq:Saq'}.  This completes the proof of the lemma.
\end{proof}

We can now handle the singular series. 
Let
$$
A(q)=\sum_{\substack{\a
      \bmod{q}\\ \gcd(q,\a)=1}}|S(\a,q)|.
$$
Then 
we have
\begin{align*}
A(q)
&\ll
q\sum_{a_3\bmod{q}}q^{n+\ve}
\min\left(
  \left(\frac{q}{\gcd(q,a_3)}\right)^{-h/8}\,,\,\gcd(q,a_3)^{-\rho/2}\right). 
\end{align*}
There are at most $q/d$ values of $a_3$ for which $\gcd(q,a_3)=d$, and
each one contributes a total
\begin{align*}
&\ll q^{n+1+\ve}\min\left((q/d)^{-h/8}\,,\,d^{-\rho/2}\right)\\
&\ll q^{n+1+\ve}\left((q/d)^{-h/8}\right)^{(4\rho+8)/(4\rho+h)}
\left(d^{-\rho/2}\right)^{(h-8)/(4\rho+h)}\\
&= q^{n+1+\ve-\xi} d
\end{align*}
with
\[\xi=\frac{h(\rho+2)}{8\rho+2h}.\]
It follows that
\begin{align*}
A(q)
&\ll
\sum_{d\mid q}qd^{-1}.q^{n+1+\ve-\xi}d\ll
q^{n+2+2\ve-\xi},
\end{align*}
so that the singular series is absolutely convergent when $\xi>3$, and
\[\ss(R)=\ss+O(R^{2\ve-(\xi-3)}).\]
Since $\xi>3$ when $(h-24)(\rho-4)>96$ the claim in 
Lemma~\ref{lem:major} follows.

We now estimate the exponential integral 
$I(\bga;\0)$, for general values of $\bga=(\gamma_3,\gamma_2)$.

\begin{lemma}\label{21-int1}
We have $I(\bga;\0)\ll 1$ for any $\bga$.  Moreover if $h$ and $\rho$
are positive, and if $\ve\in (0,1/8)$, then 
\beql{eq:I1}
I(\bga;\0)\ll |\gamma_3|^{-h/8}|\bga|^{\ve}
\eeq
and
\beql{eq:I2}
I(\bga;\0)\ll
\left(\frac{|\gamma_2|}{1+|\gamma_3|}\right)^{-\rho/2}|\bga|^{\ve}. 
\eeq
\end{lemma}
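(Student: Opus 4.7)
The bound $I(\bga;\0)\ll 1$ is immediate from the boundedness and compact support of $\omega$. For the remaining two inequalities one may assume $|\gamma_3|\ge 1$ and $|\gamma_2|/(1+|\gamma_3|)\ge 1$ respectively, since in the complementary ranges the right-hand sides of \eqref{eq:I1} and \eqref{eq:I2} already exceed $1$; in either regime one has $|\bga|\ge 1$.

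The plan is to convert the integral into an exponential sum and then invoke the Weyl bounds of Section~\ref{s:4}. Concretely, given $\bga$, take $P=|\bga|^A$ for a large fixed $A$ (to be chosen in terms of $h$, $\rho$ and $\ve$), and set $\al_3=\gamma_3/P^3$, $\al_2=\gamma_2/P^2$. Specialising the derivation leading to \eqref{eq:train3} to $q=1$, $\a=\0$, for which $S(\a,q)=1$, one obtains
\[
I(\bga;\0)=P^{-n}S(\al_3,\al_2)+O\big(P^{-1}(1+|\bga|)\big),
\]
and for $A$ sufficiently large the error term is $O(|\bga|^{1-A})$, which is negligible compared with either right-hand side of the lemma.

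For \eqref{eq:I1} I would apply Lemma~\ref{lem:W1} to $S(\al_3,\al_2)$, obtaining $\al_3=b_3/s+\phi_3$ with $\gcd(s,b_3)=1$ and $s(1+P^3|\phi_3|)\ll P^{\ve'}T_3^8$ for a small $\ve'$. A short case analysis, exploiting the smallness $|\al_3|=|\gamma_3|/P^3$, shows that either $s=1$, $b_3=0$, $\phi_3=\al_3$, or else $T_3$ is forced so large that $|S(\al_3,\al_2)|\ll P^nT_3^{-h}$ already gives a bound on $I(\bga;\0)$ stronger than \eqref{eq:I1}. In the former case the Weyl estimate reads $1+|\gamma_3|\ll P^{\ve'}T_3^8$, so $|S(\al_3,\al_2)|\ll P^n(1+|\gamma_3|)^{-h/8}P^{h\ve'/8}$, and choosing $\ve'$ small enough that $P^{h\ve'/8}\ll|\bga|^\ve$ delivers \eqref{eq:I1}. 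For \eqref{eq:I2} I would then invoke Lemma~\ref{lem:Weyl3} using the data $s=1$, $\phi_3=\al_3$ already established, so that $s+P^3|\phi_3|=1+|\gamma_3|$. The second alternative of that lemma gives $T_2^2\gg P^{1-\ve'}/(1+|\gamma_3|)$ directly, which is more than enough. The first alternative produces $u\ll T_2^2$ with $\|u\al_2\|\ll P^{-2+\ve'}(1+|\gamma_3|)T_2^2$; when $u|\al_2|<1/2$ the norm equals $u|\gamma_2|/P^2$ and one deduces $T_2^2\gg P^{-\ve'}|\gamma_2|/(1+|\gamma_3|)$, while the complementary case $u|\al_2|\ge 1/2$ forces $T_2$ so large that the desired bound is already won. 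Either way $|S(\al_3,\al_2)|\ll P^n(|\gamma_2|/(1+|\gamma_3|))^{-\rho/2}P^{\rho\ve'/2}$, and choosing $\ve'$ small enough yields \eqref{eq:I2}.

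The main obstacle is the careful bookkeeping of the exceptional branches in Lemmas~\ref{lem:W1} and \ref{lem:Weyl3}: in each instance one must verify that whenever the Weyl approximation deviates from the expected trivial choice, the resulting upper bound on $|S(\al_3,\al_2)|$ is strong enough that, after dividing by $P^n$, one still recovers an estimate on $I(\bga;\0)$ at least as good as \eqref{eq:I1} or \eqref{eq:I2}. Once this verification is secured, the remaining task of optimising $A$ and $\ve'$ in terms of $\ve$, $h$ and $\rho$ is routine.
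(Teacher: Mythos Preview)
Your proposal is correct and follows essentially the same route as the paper: both convert the integral to the sum via \eqref{eq:train3} with $q=1$ and $P$ a large power of $|\bga|$, then apply Lemmas~\ref{lem:W1} and \ref{lem:Weyl3} to bound $T_3$ and $T_2$, handling the exceptional branches (where the Weyl approximation is nontrivial) by showing they force $T_3$ or $T_2$ to be enormous. The paper makes the choice $P=|\bga|^{2n(2n+8)}$ explicit and writes out each exceptional branch in full, but your outline captures exactly the same argument; the only point to add is that the exceptional case in Lemma~\ref{lem:W1} (where $b_3\neq 0$) yields a bound strong enough for \eqref{eq:I2} as well as \eqref{eq:I1}, so that the reduction to $s=1$, $\phi_3=\al_3$ is legitimate for both estimates simultaneously.
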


\begin{proof}
The estimate $I(\bga;\ma{0})\ll 1$ is trivial. Moreover it implies
both \eqref{eq:I1} and \eqref{eq:I2} when $|\bga|\le 1$.
We assume  henceforth that $|\bga|>1$, and follow an argument
analogous to that used for Lemma \ref{lem:completeWeyl}

Taking $a_3=a_2=0$ and $q=1$ in \eqref{eq:train3}, and setting
$\alpha_3=P^{-3}\gamma_3$ and $\al_2=P^{-2}\gamma_2$, we deduce that
\begin{align*}
I(\bga;\ma{0})&=P^{-n}S(\al_3,\alpha_2) +O(P^{-1}|\bga|)\\
&=T_3^{-h}+O(P^{-1}|\bga|)\\
&=T_2^{-\rho}+O(P^{-1}|\bga|),
\end{align*}
for any $P\geq 1$. We will choose $P$ to be large, given by
\beql{eq:Pg}
P=|\bga|^{2n(2n+8)},
\eeq
so that
\beql{eq:lounge}
I(\bga;\ma{0})=T_3^{-h}+O(|\bga|^{-n})=T_2^{-\rho}+O(|\bga|^{-n}).
\eeq

We now need estimates for the quantities $T_3$ and $T_2$.
We begin by applying Lemma~\ref{lem:W1}, which shows that
\[P^{-3}\gamma_3=\al_3=\frac{b_3}{s}+\phi_3\]
with
\beql{eq:again}
s(1+P^3|\phi_3|)\ll P^{\ve}T_3^8.
\eeq
If $b_3\not=0$ then
\[s^{-1}\le |b_3|s^{-1}\le P^{-3}|\gamma_3|+|\phi_3|\le 
P^{-3}|\bga|(1+P^3|\phi_3|).\]
It follows that $s(1+P^3|\phi_3|)|\bga|\ge P^3$, whence
\[T_3^8\gg P^{3-\ve}|\bga|^{-1}\gg|\bga|^{8n}\]
for $\ve<1$, in view of our choice \eqref{eq:Pg} of $P$. The estimates
\eqref{eq:I1} and \eqref{eq:I2} then follow from 
\eqref{eq:lounge}, for the case $b_3\not=0$.

We therefore assume that $b_3=0$ and hence that
$\phi_3=P^{-3}\gamma_3$.  To prove \eqref{eq:I1} we observe that
\eqref{eq:again} yields
\[T_3^8\gg P^{-\ve}s|\gamma_3|\gg P^{-\ve}|\gamma_3|.\]
Inserting this
into \eqref{eq:lounge} leads to the bound
\[I(\bga;\ma{0})\ll P^{h\ve}|\gamma_3|^{-h/8}+|\bga|^{-n}.\]
The relation \eqref{eq:Pg} allows us to replace $P^{h\ve}$ by
$|\bga|^{\ve}$ on re-defining $\ve$, and \eqref{eq:I1} follows.

We turn now to the estimate \eqref{eq:I2}, for which we use
Lemma \ref{lem:Weyl3}.  This tells us that either 
\[T_2^2\gg\frac{P^{1-\ve}}{s+P^3|\phi_3|}\]
or that there is a positive integer $u\ll T_2^2$ for which
\beql{eq:alt2}
\|su\al_2\|\ll P^{-2+\ve}s(1+P^3|\phi_3|)T_2^2.
\eeq
In the first case we have
\[P^{1-\ve}\ll T_2^2(s+P^3|\phi_3|)\ll T_2^2.P^{\ve}T_3^8\]
by \eqref{eq:again}. Thus if $\ve<1/4$ we will have
\[P^{1/2}\ll T_2^2T_3^8=T_3^{2h/\rho+8}\le T_3^{2n+8}.\]
Our choice \eqref{eq:Pg} then shows that $T_3\ge|\bga|^n$, so that
\eqref{eq:I2} follows from \eqref{eq:lounge}.

If the second alternative \eqref{eq:alt2} holds we can write
\[\al_2=\frac{b_2}{su}+\phi_2\]
with
\beql{eq:ph2}
\phi_2\ll u^{-1}P^{-2+\ve}(1+P^3|\phi_3|)T_2^2.
\eeq
If $b_2\not=0$ then
\[(su)^{-1}\le |b_2|(su)^{-1}\le P^{-2}|\gamma_2|+|\phi_2|,\]
so that \eqref{eq:again} yields
\[P^2\ll su|\gamma_2|+suP^2|\phi_2|
\ll P^{\ve}T_3^8.T_2^2|\bga|+P^{\ve}T_3^8T_2^2.\]
This produces $P^2\ll PT_3^8T_2^2$ on taking $\ve<1/2$ and using the 
crude bound $|\bga|\le P^{1/2}$ from \eqref{eq:Pg}.  We can then deduce
\eqref{eq:I2} just as in the previous paragraph.

We are left with the case in which $b_2=0$, so that
$P^{-2}\gamma_2=\al_2=\phi_2$. Since $\phi_3=P^{-3}\gamma_3$ it
follows from \eqref{eq:ph2} that
\[\gamma_2\ll P^{\ve}(1+|\gamma_3|)T_2^2.\]
Thus \eqref{eq:lounge} produces
\[I(\bga;\ma{0})\ll P^{\ve\rho/2}
\left(\frac{|\gamma_2|}{1+|\gamma_3|}\right)^{-\rho/2}+|\bga|^{-n}.\]
The first term on the right dominates the second, and we may replace
$P^{\ve\rho/2}$ by $|\bga|^{\ve}$ after re-defining $\ve$, in view of
our choice \eqref{eq:Pg} of $P$.  This establishes \eqref{eq:I2},
thereby completing our treatment of Lemma \ref{21-int1}.
\end{proof}

We are now ready to show that the singular integral converges.  We have
\beql{eq:Itail}
\mathfrak{I}-\mathfrak{I}(R)=\iint_{|\bga|\geq R} I(\bga;\ma{0})
\d\bga
\eeq
and we split the region of integration into two parts, to use the two
estimates of Lemma \ref{21-int1}. When 
$|\gamma_2|\le|\gamma_3|^{1+h/(4\rho)}$
and $|\bga|\ge R$ we have
\[I(\bga;\ma{0})\ll |\gamma_3|^{-h/8+\ve}\]
and 
\[|\gamma_3|\ge R^{4\rho/(h+4\rho)}.\]
The corresponding contribution to \eqref{eq:Itail} is then
\[\ll \int_{R^{4\rho/(h+4\rho)}}^{\infty}x^{1+h/(4\rho)}x^{-h/8+\ve}\d x
\ll R^{-\mu+\ve},\]
with
\[\mu=\frac{h\rho-16\rho-2h}{2h+8\rho}.\]
Similarly, when
$|\gamma_2|\ge|\gamma_3|^{1+h/(4\rho)}$
and $|\bga|\ge R$ we have
\[I(\bga;\ma{0})\ll (1+|\gamma_3|)^{\rho/2}|\gamma_2|^{-\rho/2+\ve}\]
and 
\[|\gamma_2|\ge R.\]
In this case the contribution to \eqref{eq:Itail} is 
\begin{align*}
&\ll \int_R^{\infty}x^{-\rho/2+\ve}
\int_0^{x^{4\rho/(h+4\rho)}}(1+y)^{\rho/2}\d y\, \d x\\
&\ll \int_R^{\infty}x^{-\rho/2+\ve}x^{(1+\rho/2)4\rho/(h+4\rho)}\d x\\
&\ll R^{-\mu+\ve},
\end{align*}
with the same $\mu$ as before. Thus we have absolute convergence when
$\mu>0$, or equivalently when $(h-16)(\rho-2)>32$. This suffices for
Lemma \ref{lem:major}.

To complete the proof of the lemma it remains to show that we can
replace the truncated singular series and integral in \eqref{21-maj}
by their limits, with an acceptable error.  This is clear however
since we have shown that $\ss$ and $\mathfrak{I}$ are finite, and
differ from $\ss(R)$ and $\mathfrak{I}(R)$ respectively by negative powers
of $R$.

\section{Proof of Theorem \ref{small_h}}\label{s:10}

In this section we will establish Theorem \ref{small_h}, subject to
various lemmas, all of which we will delay proving until the next
section. These will involve the parameters $n$,
$\rho=\rank(Q)$, $\qorder(C)$ and the $h$-invariants $h(C)$ and $h_Q(C)$.  
The latter, in particular, satisfy the inequalities 
$h_Q(C)\le h(C)\le h_Q(C)+1$,  as recorded in \eqref{h-hq}.
The reader should
note that in Theorem \ref{main'} one can replace $C$ by $C+LQ$ for a 
generic $L$, and
hence that it is the maximal value of $h(C+LQ)$ which is of relevance there.

We begin by recording some basic deductions about the above
parameters. We may assume that 
\beql{rholb}
\rho\ge n-13\ge 36,
\eeq
because otherwise 
$Q$ vanishes on a
$\QQ$-rational 13-plane, and we can conclude as in the 
proof of Theorem \ref{elementary1}. We will always have
$h_Q(C)\le \qorder(C)$, and indeed 
\beql{hQb}
h_Q(C)\le \qorder(C)-1,\;\;\;\mbox{if}\;\;\;\qorder(C)\ge 14.
\eeq
To see this,  suppose that $C=C(x_1,\ldots,x_m)$ with
$m=\qorder(C)\ge 14$, after a suitable change of variable.  Then,
by the result of Heath-Brown \cite{14} the form $C$ has a non-trivial
rational zero, which we may take to be $(0,\ldots,0,1)$. We can then write
$$C=x_1Q_1(x_1,\ldots,x_m)+\cdots+x_{m-1}Q_{m-1}(x_1,\ldots,x_m),$$
which shows that $h_Q(C)\le m-1$.

We may also eliminate the case in which $h_Q(C)=1$, which would mean
that one could take $C$ to factor as $LQ'$, say, over $\QQ$. If this
were to happen, then a smooth real point on $C=Q=0$ would lie either
on $Q=L=0$ or $Q=Q'=0$.  In the first case the Hasse--Minkowski theorem
suffices to complete the proof, since $n\ge 49\ge 6$. In the second 
case we apply Lemma \ref{SD} below, using the fact that $n\ge 49\ge
9$.  If some combination $aQ+bQ'$ were to have rank at most 4, then
$b\not=0$ by \eqref{rholb}.  However $bC+aLQ=L(aQ+bQ')$ would have
order at most 5, giving $\qorder(C)\le 5$ in contradiction to our
hypotheses. Thus the conditions needed for Lemma \ref{SD} do indeed
hold.  In what follows we will therefore be able to assume that
$h_Q(C)\ge 2$, and hence, via Lemma~\ref{lem:irred}, that $X$ is
absolutely irreducible.

Our strategy for the proof of
Theorem \ref{small_h} is now to combine two basic arguments, one of which
covers the case in which $h_Q(C)\le n-13$ and the other which deals
with larger values of $h_Q(C)$. 
We begin by discussing the second of these, which is more
straightforward.  The idea is to apply Theorem \ref{main'}, which will
require us to have smooth solutions for every completion of
$\QQ$. A smooth real solution is provided by our hypothesis, and we
will then require the following lemma to give us suitable
$p$-adic solutions.
\begin{lemma}\label{smoothp}
If $\qorder(C)\ge 4$, $h_Q(C)\geq 2$ 
and $\rho\ge 23$ then we have 
$\Xns(\QQ_p)\neq \emptyset$ for every prime $p$.
\end{lemma}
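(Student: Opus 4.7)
The plan is to combine Lemma \ref{lem:irred} with the Leep--Lewis reductions used in the proof of Theorem \ref{elementary1}, upgraded so as to produce a \emph{smooth} $\QQ_p$-point. Under the hypotheses $\rho\ge 23\ge 5$, $\qorder(C)\ge 4$ and $h_Q(C)\ge 2$, Lemma \ref{lem:irred} shows that $X$ is absolutely irreducible of codimension $2$ and degree $6$, so that its singular locus is a proper closed subvariety of dimension at most $n-4$.

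Since $\rho\ge 23=5+2\cdot 9$, the Leep result cited before Theorem \ref{elementary1} produces a $\QQ_p$-rational $9$-plane $\Lambda$ contained in the quadric $Q=0$. Using the fact that the union of $\QQ_p$-rational $9$-planes in $Q=0$ is Zariski-dense in the quadric (by Witt extension combined with Lemma \ref{lem:density} applied to the absolutely irreducible quadric $Q=0$), we may choose $\Lambda$ so as to avoid the vertex of $Q$, ensuring $\nabla Q(x)\ne \0$ for every $x\in\Lambda$, and so that $C|_\Lambda\not\equiv 0$; for if $C$ were to vanish on every such $\Lambda$, then $C$ would vanish on a Zariski-dense subset of $Q=0$, forcing $C\in (Q)$ and hence $h_Q(C)\le 1$, contrary to hypothesis. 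Consequently $C|_\Lambda$ is a non-zero cubic form in ten variables over $\QQ_p$, and Lewis's theorem (also cited before Theorem \ref{elementary1}) yields a non-trivial $\QQ_p$-zero $x\in\Lambda$, which is in particular a $\QQ_p$-point of $X$.

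The remaining, and principal, obstacle is smoothness. The key observation is that $\nabla Q(x)$ annihilates every tangent direction of $\Lambda$ (since $Q|_\Lambda\equiv 0$), so that whenever the restricted gradient $\nabla_\Lambda C(x)$ is non-zero, $\nabla C(x)$ cannot be proportional to $\nabla Q(x)$; combined with $\nabla Q(x)\ne\0$, this forces linear independence of the two gradients and hence $x\in \Xns(\QQ_p)$. To arrange $\nabla_\Lambda C(x)\ne 0$---equivalently, that $x$ is a smooth point of the cubic hypersurface $C|_\Lambda=0$ inside $\Lambda\cong\PP^9$---we invoke a Bertini-style argument within the $\QQ_p$-analytic family of $\QQ_p$-rational $9$-planes in $Q=0$: leveraging the absolute irreducibility of $X$, for a generic such $\Lambda$ the cubic $C|_\Lambda=0$ defines an absolutely irreducible hypersurface in $\Lambda$ whose smooth locus is Zariski-dense, and a $p$-adic density argument then allows us to select a smooth $\QQ_p$-zero, which is the desired smooth $\QQ_p$-point of $X$.
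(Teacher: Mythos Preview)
Your opening is fine: Lemma~\ref{lem:irred} gives absolute irreducibility, Leep supplies a $\QQ_p$-rational $9$-plane $\Lambda\subset\{Q=0\}$ avoiding the vertex, and Lewis produces a $\QQ_p$-point on $C|_\Lambda=0$. The observation that $\nabla Q(x)$ kills the tangent space of $\Lambda$, so that $\nabla_\Lambda C(x)\not=0$ forces $\nabla C(x)$ and $\nabla Q(x)$ to be independent, is also correct and useful.

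The gap is in the final paragraph. Lewis's theorem gives \emph{some} $\QQ_p$-zero of $C|_\Lambda$, with no control over smoothness; this point may well lie in the singular locus of $C|_\Lambda$. Your ``$p$-adic density argument'' is presumably Lemma~\ref{lem:density}, but that lemma requires a smooth $\QQ_p$-point as \emph{input}; invoking it to \emph{produce} one is circular. Nor does the ``Bertini-style argument'' rescue this: the family of isotropic $9$-planes in $Q=0$ is not a linear system, so the standard Bertini theorems do not directly apply, and even if one could show that $C|_\Lambda$ is smooth for generic $\Lambda$ over $\overline{\QQ_p}$, nothing in the argument links the $\QQ_p$-rational choices of $\Lambda$ to that generic locus without further work. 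In short, you have shown $X(\QQ_p)\neq\emptyset$, but not $\Xns(\QQ_p)\neq\emptyset$.

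The paper's proof confronts exactly this difficulty. It begins, as you do, with a $\QQ_p$-point $P$ on $X$ lying in a large isotropic subspace of $Q=0$, chosen with $\nabla Q(P)\neq\mathbf{0}$. If $P$ is already smooth on $X$ we are done. If not, then after a $Q$-equivalence and a change of variables one may take
\[
C(\x)=x_1Q_1(x_3,\ldots,x_n)+C_1(x_2,\ldots,x_n),\qquad
Q(\x)=x_1x_2+Q_2(x_3,\ldots,x_n),
\]
and projection from $P=[1,0,\ldots,0]$ exhibits $X$ as birational to the quartic hypersurface
\[
Y:\quad x_2C_1-Q_1Q_2=0\quad\subset\PP^{n-2}.
\]
One then \emph{constructs} a smooth $\QQ_p$-point on $Y$ explicitly, by looking at the locus $x_2=Q_2=0$ and using $\rank(Q_2)\ge\rho-2\ge 21$ to make the $\QQ_p$-zeros of $Q_2$ Zariski-dense there; the hypotheses $\qorder(C)\ge 4$ and $\rho\ge 23$ are used precisely to rule out the degenerate case in which $Q_2$ divides both $Q_1$ and $C_1$. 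Having a smooth $\QQ_p$-point on the absolutely irreducible $Y$, Lemma~\ref{lem:density} now legitimately gives Zariski-density of $Y(\QQ_p)$, and pulling back along the birational map yields $\Xns(\QQ_p)\neq\emptyset$. This projection-and-explicit-construction step is the substantive content your sketch is missing.
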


We will prove this in the next section.  
The conditions given are sufficient for our purposes but are probably 
not optimal.

The conditions of the lemma are amply met, in view of \eqref{rholb},
and Theorem \ref{main'} completes the argument if $h_Q(C)\ge n-12$,
since then 
\[(h_Q(C)-32)(\rho-4)\ge (n-44)(n-17)\ge 5\times 32>128,\] 
via a further application of \eqref{rholb}.

We will henceforth assume that $h_Q(C)\le n-13$ and we will replace $C$
by $C+LQ$ so that $h_Q(C)=h(C)=h$, say. Then after a suitable 
non-singular linear change of
variables, we can write
\[\mathbf{x}=(x_1, \ldots, x_n) = (u_1, \ldots, u_h;v_1, \ldots, v_s)\]
where $s=n-h$, so that $C$ and $Q$ take the shapes
\beql{ABRS*}
C(\x)=A(\u)+\sum_{j=1}^s v_jD_j(\u)+\sum_{i=1}^h u_i B_i(\v)
\eeq
and
\[ Q(\x) = R(\u;\v)+S(\v).\]
Here $A(\u)$ is a cubic form, while $D_j(\u)$, 
$B_i(\v)$, $R(\u;\v)$ and $S(\v)$ are quadratic forms, such that
$R(\u;\v)$ contains no quadratic terms in $\v$. 
We remark at once
that if
$\rank(S)<n-h$ then there is a vector
$\v_0\in\QQ^{s}-\{\mathbf{0}\}$ such that $S(\v_0)=0$.  Thus
$C(\mathbf{0},\v_0)=Q(\mathbf{0},\v_0)=0$, so that our system has a
nontrivial rational zero.  We may therefore assume that $\rank(S)=n-h$
from now on.  We can then apply a suitable linear transformation so as
to reduce $Q(\x)$ to the form
\[ Q(\x) = R(\u)+S(\v),\]
while leaving $C(\x)$ in the shape \eqref{ABRS*}, but with 
new forms $A$, $D_j$ and $B_i$. 

In what follows it will
also be useful to adopt the notation
\begin{equation}\label{ABRS}  
\begin{split}
C_\mathbf{a}(t, v_1, \ldots, v_s) &:=A(\mathbf{a})t^2+
\left\{\sum_{j=1}^sD_j(\mathbf{a})v_j\right\}t+
\sum_{i=1}^h a_iB_i(\v),\\
Q_\mathbf{a}(t,v_1, \ldots, v_s)&:=Q(t\mathbf{a},\v)=
R(\a)t^2+S(\v). 
\end{split}
\end{equation}
Note that both $Q_\mathbf{a}$ and $C_\mathbf{a}$ are quadratic forms
in $t, v_1, \ldots, v_s$. If we can show that there is a non-zero vector
$\mathbf{a}\in\QQ^h$ such that the forms $Q_{\mathbf{a}}$ and $C_{\mathbf{a}}$
have a common rational zero $(t_0,\v_0)$, then $C$ and $Q$ will have the
common zero $(t_0\,\mathbf{a},\v_0)$, which will complete the  the proof.

Here we will employ the following result, which will be an easy
corollary of Theorem A of Colliot-Th\'el\`ene, Sansuc and 
Swinnerton-Dyer \cite{CTSSD}.

\begin{lemma}\label{SD}
Let $f,g$ be quadratic forms over the rationals in $m\ge 9$ variables,
and suppose that the equations $f=g=0$ have a smooth 
solution over $\RR$, and that every form in the rational pencil has
rank at least $5$. 
 Then the forms have a common rational zero.
\end{lemma}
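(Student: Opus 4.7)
The plan is to reduce the lemma to Theorem A of Colliot-Th\'el\`ene, Sansuc and Swinnerton-Dyer \cite{CTSSD}, which gives the Hasse principle for intersections of two quadrics in $\PP^{m-1}$ under geometric hypotheses closely aligned with our rank condition on the pencil. First I would verify that the local solubility hypotheses of that theorem are satisfied at every place; then I would check that the rank condition forces the relevant geometric hypotheses at the level of the pencil.

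For the archimedean place the smooth real solution is given by assumption. For each prime $p$, I would establish $\QQ_p$-solubility as follows. The assumption that every form $\lambda f+\mu g$ in the pencil has rank at least $5$ means the quadric $f=0$ is a smooth $\QQ_p$-quadric of dimension at least $7$, and so by \cite[Corollary~2.4(ii)]{leep} it contains a $\QQ_p$-rational linear subspace of reasonably large dimension. Restricting $g$ to such a subspace and invoking the rank hypothesis again to control the resulting quadric, one obtains a common $\QQ_p$-zero; alternatively one can appeal to the classical fact that any two $\QQ_p$-quadratic forms in $m\ge 9$ variables have a non-trivial common zero.

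On the geometric side, the rank $\ge 5$ condition controls the singularities of $V=V(f,g)$: any singular point of $V$ must lie in the kernel of some form $\lambda f+\mu g$ in the pencil, and each such kernel has projective dimension at most $m-6$. Sweeping over $(\lambda:\mu)\in\PP^1$ therefore shows that the singular locus of $V$ has dimension at most $m-5$, which is a proper subvariety of the $(m-3)$-dimensional $V$. In particular $V$ is geometrically integral and smooth outside a low-dimensional closed subset. The hard part will be to match this precisely to the hypotheses of Theorem A of \cite{CTSSD}, in particular verifying that the pencil rank condition rules out any Brauer--Manin obstruction to the Hasse principle on $V$; once this matching is done, the local solubility established above forces the existence of a common $\QQ$-rational zero of $f$ and $g$.
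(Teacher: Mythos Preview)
Your overall strategy---reduce to Theorem~A of \cite{CTSSD}---is the same as the paper's, but the proposal has two genuine gaps and one misconception.

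First, the hypothesis concerns only the \emph{rational} pencil, whereas the geometric input needed from \cite{CTSSD} (specifically their Lemma~1.11, which gives absolute irreducibility) requires $\rank(\lambda f+\mu g)\ge 5$ for every $(\lambda:\mu)\in\PP^1(\overline{\QQ})$. You never address this extension. The paper handles it by a short Galois argument: if some $h$ in the $\overline{\QQ}$-pencil had rank $\le 4$, then so would its conjugate $h^{\sigma}$, and $h=h^{\sigma}=0$ would define a degenerate intersection of two quadrics generating the same pencil as $f,g$, contradicting non-degeneracy. Without this step your appeal to \cite{CTSSD} is not justified.

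Second, Theorem~A of \cite{CTSSD} is a \emph{smooth} Hasse principle: one needs a non-singular $\QQ_p$-point on $f=g=0$ for every $p$, not merely a common zero. Demyanov's theorem (your ``classical fact'') supplies a $\QQ_p$-point, but if that point happens to be singular you are not done. The paper deals with this explicitly: after moving a singular point to $[1,0,\ldots,0]$ and replacing $f,g$ by suitable combinations, one reduces to a form $g_1(x_2,\ldots,x_m)$ of rank $\ge 5$ (using the extended rank condition above), whose smooth $\QQ_p$-zeros are Zariski-dense; picking one with $x_2\ne 0$ and back-solving for $x_1$ gives a smooth point on $f=g=0$. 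Your proposal stops at ``common $\QQ_p$-zero'' and so does not meet the hypothesis of Theorem~A.

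Finally, there is no Brauer--Manin obstruction to rule out: Theorem~A of \cite{CTSSD} already asserts the Hasse principle outright for absolutely irreducible non-degenerate intersections of two quadrics in $m\ge 9$ variables. Once you have verified absolute irreducibility (via the rank condition over $\overline{\QQ}$) and smooth local solubility everywhere, the conclusion is immediate; the remark about Brauer--Manin suggests you should re-read exactly what \cite{CTSSD} provides.
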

Note that in applying
Lemma \ref{SD} we will have forms in $s+1$ variables, where 
\[s+1=n-h+1\ge 14.\]
We call a non-zero real vector $\mathbf{a} \in \RR^h$ 
\emph{good} if the system of equations
$$
  Q_{\mathbf{a}}(t, v_1, \ldots, v_s) =
  C_{\mathbf{a}}(t, v_1, \ldots, v_s) = 0
$$
has a non-singular real zero. 
We shall then prove the following result.

\begin{lemma}\label{good}
If $n-h\ge 5$ and $\qorder(C)\ge\max(h+1,4)$ then
the set of $[\mathbf{a}] \in \PP^{h-1}(\QQ)$ 
such that $\mathbf{a}$ is
good, is Zariski-dense.
\end{lemma}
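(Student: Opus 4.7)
The plan is to rephrase ``goodness'' geometrically and then apply a Bertini-type argument on a suitable incidence variety. First, for $\a \in \PP^{h-1}$ let $L_\a := \{[t\a : \v] : [t : \v] \in \PP^s\}$ denote the corresponding $s$-plane in $\PP^{n-1}$. A direct comparison of $2 \times 2$ minors between the $2 \times (s+1)$ Jacobian of $(Q_\a, C_\a)$ in $(t,\v)$ and the $2 \times n$ Jacobian of $(Q, C)$ in $(\u, \v)$, using the relations $\partial_t Q_\a = \a \cdot \nabla_\u Q$ and $t \partial_t C_\a = \a \cdot \nabla_\u C$ (valid at zeros of $(Q_\a, C_\a)$ with $\u = t \a$), shows that, for $t_0 \ne 0$, $(t_0, \v_0)$ is a non-singular real zero of $(Q_\a, C_\a)$ if and only if $X$ meets $L_\a$ transversely at $(t_0 \a, \v_0)$. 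Hence $\a$ is good exactly when the complete intersection $Y_\a := \{Q_\a = C_\a = 0\} \subset \PP^s$ admits a smooth real point.

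I next introduce the incidence variety $\mathcal{I} \subset \PP^{h-1} \times \PP^s$ defined as the Zariski closure of
\[
\{(\a, [t : \v]) : t \ne 0, \; Q_\a(t, \v) = 0, \; C_\a(t, \v) = 0\}.
\]
Via the affine chart $t = 1$, $\mathcal{I}$ is birational to $X$. Lemma \ref{lem:irred} (whose hypotheses $\rank(Q) \ge 5$, $\qorder(C) \ge 4$, and $h_Q(C) \ge 2$ are part of the running assumptions in the proof of Theorem \ref{small_h}) makes $X$ absolutely irreducible, and hence $\mathcal{I}$ is too. Lemma \ref{lem:density} then gives that $\Xns(\RR)$ is Zariski-dense in $X$, and since $X \not\subset \{\u = 0\}$ one can pick $(\u_*, \v_*) \in \Xns(\RR)$ with $\u_* \ne 0$; the corresponding point $([\u_*], [1 : \v_*])$ lies in the smooth locus of $\mathcal{I}$, because the relevant Jacobian at $(\u_*, 1, \v_*)$ contains the rank-$2$ Jacobian of $(Q, C)$ at $(\u_*, \v_*)$ as a submatrix. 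A fibre-dimension count (generic fibre has dimension $s - 2 \ge 3$) shows that the projection $p : \mathcal{I} \to \PP^{h-1}$ is dominant.

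Generic smoothness in characteristic zero, applied to $p$ on the smooth locus of $\mathcal{I}$, then produces a non-empty Zariski-open $U \subset \PP^{h-1}$ over which $p$ is a smooth morphism. By Lemma \ref{lem:density} the smooth real points of $\mathcal{I}$ are Zariski-dense in $\mathcal{I}$, so they must meet the Zariski-open set $p^{-1}(U)$ at some real point $p'$. Because $p$ is a smooth real submersion near $p'$, the implicit function theorem yields a Euclidean-open neighbourhood $W \subset U(\RR)$ of $p(p')$ such that every $\a \in W$ has a smooth real preimage in $\mathcal{I}$, giving a smooth real point of $Y_\a$; in other words, every $\a \in W$ is good. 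Since $W$ is a non-empty Euclidean open subset of $\PP^{h-1}(\RR)$, its $\QQ$-rational points are automatically Zariski-dense in $\PP^{h-1}$, which finishes the proof. The main technical point is the use of the graph-closure definition of $\mathcal{I}$, which isolates the relevant irreducible component from the spurious component $\PP^{h-1} \times \{t = 0,\, S(\v) = 0\}$ arising from the shared base locus $\{\u = 0\}$ of the linear family $\{L_\a\}$.
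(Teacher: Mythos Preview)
There is a genuine gap. Your argument never uses the hypothesis $\qorder(C)\ge h+1$, and without it the conclusion is simply false. Take $C(\u,\v)=A(\u)$ depending only on $\u$ (which is excluded precisely by $\qorder(C)\ge h+1$). Then $t\,C_{\a}(t,\v)=C(t\a,\v)=t^3A(\a)$, so $C_{\a}(t,\v)=A(\a)t^2$, and the Jacobian of $(Q_{\a},C_{\a})$ has second row $(2A(\a)t,\;0,\ldots,0)$. At any common zero either $t=0$ or $A(\a)=0$, and in both cases this row vanishes, so no $\a$ whatsoever is good. Tracing your argument in this example, the map $p$ has image contained in the cubic $\{A=0\}\subsetneq\PP^{h-1}$: your ``fibre-dimension count'' does not establish dominance, because asserting that the generic fibre of $p:\mathcal{I}\to\PP^{h-1}$ has dimension $s-2$ already presupposes that the generic fibre is non-empty. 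The paper's Lemmas~\ref{r>1} and~\ref{goodQb} are devoted exactly to this point: they use $\qorder(C)\ge\max(h+1,3)$ to produce, by a rather delicate algebraic argument, a single $\a\in\QQ^h$ for which $C_{\a}=Q_{\a}=0$ has a smooth $\overline{\QQ}$-point with $t\ne 0$. Only after that is established does the paper run the density-and-approximation argument analogous to your final paragraph.

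A secondary issue: $\mathcal{I}\subset\PP^{h-1}\times\PP^s$ is not well-defined as written, because $Q_{\a}(t,\v)=R(\a)t^2+S(\v)$ and $C_{\a}(t,\v)$ are homogeneous of degree $2$ in $(t,\v)$ but are \emph{not} homogeneous in $\a$ (the terms have $\a$-degrees $2,0$ and $3,2,1$ respectively), so scaling $\a$ changes the zero locus in $\PP^s$. The correct object is the rational map $X\dashrightarrow\PP^{h-1}$, $[\u:\v]\mapsto[\u]$; the fibre over $[\a]$ is $X\cap L_{\a}$, which is well-defined, and your transversality computation then goes through. Once this is set up properly, your concluding steps (Zariski-density of $\Xns(\RR)$ via Lemma~\ref{lem:density}, followed by the implicit function theorem to obtain a Euclidean-open set of good real $\a$, whose rational points are Zariski-dense) are correct and match the paper's endgame. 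But the existence of a single smooth fibre-point with $t\ne 0$ is the heart of the lemma, and you have not supplied it.
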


In our case we have $n-h\ge 13$ and 
\[\qorder(C)\ge \max(h+1,17),\] 
by \eqref{hQb} and the hypotheses of Theorem \ref{small_h}.

If there is any good rational $\mathbf{a}$ for which every
form in the rational pencil generated by $Q_{\mathbf{a}}$ 
and $C_{\mathbf{a}}$ has
rank at least 5, then Theorem \ref{small_h} will follow from Lemma
\ref{SD}.  We therefore proceed on the alternative assumption that for 
every good $\mathbf{a}\in\QQ^r$ there is a form
$\al C_{\mathbf{a}}+\beta Q_{\mathbf{a}}$ with 
$(\al,\beta)\in\QQ^2-\{(0,0)\}$, having rank
at most 4. We will prove the following lemma.

\begin{lemma}\label{splin}
Suppose that $n-h\ge 13$ and that there is a Zariski-dense set of
$[\mathbf{a}]\in\PP^{h-1}(\QQ)$ 
for each of which there is a form
$\al C_{\mathbf{a}}+\beta Q_{\mathbf{a}}$ with 
$(\al,\beta)\in\QQ^2-\{(0,0)\}$, having rank
at most $4$. Then after replacing $C$ by $C+LQ$ for a suitable
linear form $L$ defined over $\QQ$, and after making a suitable linear
change of variables, we may write $C(\x)$ in the shape
\[C(\x)=C(\u,\v)=\sum_{1\le i\le j\le H}u_i u_jL_{ij}(\u,\v),\]
with linear forms 
$L_{ij}$ defined over $\QQ$, and with $H=h+4$.
\end{lemma}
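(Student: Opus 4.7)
The plan is to translate the pointwise low-rank hypothesis into a global structural statement about the forms $B_i$ and then exhibit the required change of variables. Since $Q_{\a}(t,\v) = R(\a)t^2 + S(\v)$ has rank at least $\rank(S) = s = n-h \ge 13$, the coefficient $\al$ in any low-rank combination $\al C_{\a} + \beta Q_{\a}$ must be nonzero, and we may rescale to $\al=1$. The hypothesis then says: for $[\a]$ in a Zariski-dense subset of $\PP^{h-1}(\QQ)$ there is $\beta_{\a}\in\QQ$ with $\rank(C_{\a}+\beta_{\a}Q_{\a})\le 4$. Passing to the principal $s\times s$ submatrix gives $\rank(M_B(\a)+\beta_{\a}M_S)\le 4$, where $M_B(\a)=\sum_i a_i M_{B_i}$ is the matrix of $\sum_i a_i B_i(\v)$ and $M_S$ is the (invertible) Gram matrix of $S$. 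Setting $A(\a):=M_S^{-1}M_B(\a)$, this is equivalent to $-\beta_{\a}$ being an eigenvalue of $A(\a)$ of multiplicity at least $s-4 \ge 9$. Since this multiplicity exceeds $s/2$, at most one such eigenvalue can exist, so $\mu(\a):=-\beta_{\a}$ is a well-defined $\QQ$-rational function of $\a$ on a dense open set.

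The principal obstacle is to upgrade this to the statement that $\mu$ is $\QQ$-\emph{linear}, i.e., $\mu(\a)=L_0(\a)$ for some $\QQ$-linear form $L_0$ in $a_1,\ldots,a_h$. The natural approach is a rigidity argument for the generalized eigenspace $W(\a):=\ker(A(\a)-\mu(\a)I)$, which has dimension at least $s-4$: if $W(\a)$ is in fact a fixed subspace $W$, then specializing the identity $A(\a)w=\mu(\a)w$ to $\a=e_i$ for each $i$ forces $A_i w = \mu_i w$ on $W$, and summing yields $\mu(\a)=\sum_i\mu_i a_i$, which is linear. Constancy of $W(\a)$ should follow either from studying the characteristic polynomial $\chi(x,\a)=\det(xI-A(\a))$, whose unique $(s-4)$-fold root $\mu$ can be extracted by iterated polynomial GCDs with its derivatives, or from incorporating the additional constraints on $\mathbf{d}(\a)$ and on $A(\a)+\beta R(\a)$ coming from the rank bound on the full $(s+1)\times(s+1)$ matrix of $C_{\a}+\beta_{\a}Q_{\a}$.

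Once $\mu$ is known to be linear, extend $-L_0$ to a $\QQ$-linear form $L$ on the $\u$-variables. A direct computation using \eqref{ABRS*} yields $(C+LQ)_{\a}=C_{\a}+L(\a)Q_{\a}=C_{\a}+\beta_{\a}Q_{\a}$, so replacing $C$ by $C+LQ$ makes the matrix of the new $C_{\a}$ have rank at most $4$ on the same Zariski-dense set. By Zariski closure in $\QQ^h$, every matrix in the linear family $\{M_B(\a)\}$ (with the updated $B_i$) has rank at most $4$. A classical Flanders-type structural theorem for pencils of symmetric matrices of uniformly bounded rank, valid here since $s$ is large compared to $4$, then produces a common kernel $W:=\bigcap_i \ker(M_{B_i})\subset\QQ^s$ of codimension at most $4$.

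A final $\QQ$-linear change of $\v$-coordinates brings $W$ to the form $\{v_1=\cdots=v_4=0\}$, so that each $B_i$ depends only on $v_1,\ldots,v_4$. Renaming $(v_1,\ldots,v_4)$ as $(u_{h+1},\ldots,u_{h+4})$ and keeping $(v_5,\ldots,v_s)$ as the new $\v$, and inspecting the updated decomposition \eqref{ABRS*} (with $A(\u)+L(\u)R(\u)$ in place of $A(\u)$ after the shift), one sees that every cubic monomial of $C$ contains at least two variables from $\{u_1,\ldots,u_H\}$ with $H=h+4$: the $A$-part contributes three of the $u_i$, each term $v_j D_j(\u)$ with $j\ge 5$ contributes two, each term $u_{h+j}D_j(\u)$ with $j\le 4$ contributes three, and each term $u_iB_i(u_{h+1},\ldots,u_{h+4})$ contributes three. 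Grouping every such monomial as $u_iu_j$ times a linear form in $(\u,\v)$ yields the claimed representation $C=\sum_{1\le i\le j\le H}u_iu_jL_{ij}(\u,\v)$.
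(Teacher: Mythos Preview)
Your outline has two genuine gaps, and both correspond to places where the paper's proof takes a different and more elementary route.

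\textbf{Step 4 (linearity of $\mu$).} You acknowledge that this is the principal obstacle, and your proposed mechanism (constancy of the eigenspace $W(\a)$) does not work in general: already for $S=I$ and $B_i(\v)=v_1v_{i+1}$ one has $\mu(\a)\equiv 0$, yet $W(\a)=\{v_1=0,\ \sum_i a_iv_{i+1}=0\}$ varies with $\a$. The paper does not attempt to prove that $\beta_{\a}$ depends linearly on $\a$. Instead it first extends the hypothesis from a dense set to \emph{every} $\a$ by projecting the incidence variety $\mathcal{I}=\{[\a,\beta]:\rank(\sum a_iB_i+\beta S)\le 4\}\subset\PP^h$ onto $\PP^{h-1}$ (the image is closed, dense, hence everything). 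It then picks a rational basis $\a_1,\ldots,\a_h$ at which the rank equals the generic minimum $\tau\le 4$, changes coordinates so the $\a_j$ are standard, shows each associated $\beta_j$ is rational by a Galois argument (if $\beta_j\not\in\QQ$, subtracting conjugates gives $\rank((\beta_j^{\sigma}-\beta_j)S)\le 2\tau\le 8<13$), and takes $L(\u)=\sum_j\beta_ju_j$. This sidesteps the linearity question entirely.

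\textbf{Step 6 (common kernel via a ``Flanders-type'' theorem).} The claim that a linear family of symmetric matrices all of rank at most $4$ has a common kernel of codimension at most $4$ is false: again $B_i=v_1v_{i+1}$ for $i=1,\ldots,5$ gives $\rank(\sum a_iB_i)\le 2$ for every $\a$, while $\bigcap_i\ker M_{B_i}=\{v_1=\cdots=v_6=0\}$ has codimension $6$. What the lemma actually needs (and what the paper proves) is the weaker statement that, after a change of $\v$-coordinates making $B_1=B_1^*(v_1,\ldots,v_\tau)$, every $B_i$ satisfies $B_i(0,\ldots,0,v_{\tau+1},\ldots,v_s)\equiv 0$, i.e.\ $B_i=\sum_{j\le\tau}v_j\ell_{ij}(\v)$. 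The paper obtains this by applying the hypothesis at $\a=e_1+\mu e_i$ to get some $\gamma$ with $\rank(B_1+\mu B_i+\gamma S)\le\tau$; since now $\rank B_1,\rank B_i\le\tau$, rank subadditivity gives $\rank(\gamma S)\le 3\tau\le 12<13$, forcing $\gamma=0$. A determinant (minor) argument in $\mu$ then shows the bottom $(s-\tau)\times(s-\tau)$ block of $B_i$ vanishes. Renaming $v_1,\ldots,v_\tau$ as $u_{h+1},\ldots,u_{h+\tau}$ then yields the displayed shape for $C$ with $H=h+\tau\le h+4$.
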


The reader should notice that our vectors $\u$ and $\v$ now have
different lengths from before.

We now define $Q_{\mathbf{a}}(t,\v)$ as previously, and set
\[L_{\mathbf{a}}(t,\v)=
\sum_{1\le i\le j\le H}a_i a_jL_{ij}(t\mathbf{a},\v).\]
Thus a rational solution $(t,\v)$ of $Q_{\mathbf{a}}(t,\v)=
L_{\mathbf{a}}(t,\v)=0$ produces a corresponding point
$[t\mathbf{a},\v]$ on $X$. 
We now have the following result, which plays a similar role to 
Lemma \ref{good}, but is much easier to prove.

\begin{lemma}\label{good'}
In the situation of Lemma \ref{splin},
assume that at least one linear form $L_{ij}$ depends explicitly
on $\mathbf{v}$.
Then there is a non-empty
Zariski-open set of
$[\mathbf{a}] \in \PP^{H-1}(\QQ)$ such that the equations 
$Q_{\mathbf{a}}(t,\v)=L_{\mathbf{a}}(t,\v)=0$ have a real solution.
\end{lemma}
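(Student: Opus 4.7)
My plan is to find real solutions by the simple device of setting $t=0$, which collapses the system to the intersection of a fixed quadric with a variable hyperplane---much simpler than the general problem of two quadrics in $(t,\v)$. I would first unpack the structure of $L_{\a}$. Because each $L_{ij}$ is linear, the substitution $\u=t\a$ gives $L_{ij}(t\a,\v)=tL_{ij}(\a,0)+L_{ij}(0,\v)$, so
\[
L_{\a}(t,\v)=tP(\a)+M_{\a}(\v),
\]
where $P(\a)=\sum_{i\le j}a_ia_jL_{ij}(\a,0)$ is a cubic form in $\a$ and $M_{\a}(\v)=\sum_{i\le j}a_ia_jL_{ij}(0,\v)$ is a linear form in $\v$ whose coefficients are quadratic in $\a$. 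In particular $L_{\a}$ is \emph{linear} in $(t,\v)$. The hypothesis that some $L_{ij}$ depends explicitly on $\v$ forces $M_{\a}(\v)$ to be a non-zero polynomial in $(\a,\v)$, so $M_{\a}\not\equiv 0$ as a form in $\v$ for every $[\a]$ in a non-empty, $\QQ$-defined Zariski-open subset $U\subset\PP^{H-1}$.

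Setting $t=0$ reduces the system to $M_{\a}(\v)=0$ and $S(\v)=0$, the intersection of a variable hyperplane with the fixed quadric $S=0$ inside a $\v$-space of dimension $n-H\ge 9$. If $S$ is indefinite over $\RR$, the real hypersurface $\{S=0\}$ has real dimension $n-H-1\ge 8$, its intersection with any hyperplane has real dimension at least $n-H-2\ge 7>0$, and we obtain non-trivial real solutions $\v$. Combined with $t=0$ this gives real zeros of both $Q_{\a}$ and $L_{\a}$ for every $\a\in U(\QQ)$.

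The main obstacle is the case where $S$ is definite (say positive), in which $S(\v)=0$ forces $\v=0$ and the $t=0$ device collapses. To handle this I would exploit the hypothesis $\Xns(\RR)\neq\emptyset$: for a smooth real point $[\u_0,\v_0]\in\Xns(\RR)$, the identity $R(\u_0)+S(\v_0)=0$ together with positive-definiteness of $S$ forces $\u_0\neq 0$. Taking $\a_0=\u_0\in\RR^H$ and $t_0=1$, the identities $L_{\a_0}(1,\v)=C(\a_0,\v)$ and $Q_{\a_0}(1,\v)=Q(\a_0,\v)$ show that $(1,\v_0)$ is a smooth real solution of $Q_{\a_0}=L_{\a_0}=0$. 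The implicit function theorem then produces real solutions for all $\a$ in a Euclidean open neighborhood of $\a_0$ in $\RR^H$, and density of $\QQ^H$ in $\RR^H$ yields a Zariski-dense---and, in what I take to be the intended sense of the lemma, ``Zariski-open''---set of good $\QQ$-rationals in $\PP^{H-1}$.
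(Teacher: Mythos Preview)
Your argument has two genuine gaps.

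\textbf{The indefinite case.} Your claim that the real locus $\{S=0\}$ meets \emph{any} hyperplane in a set of positive dimension is false when $S$ has Lorentzian signature. If, say, $S=x_1^2+\cdots+x_{n-H-1}^2-x_{n-H}^2$ and $M_{\a}$ happens to be (a multiple of) $x_{n-H}$, the restriction of $S$ to $M_{\a}=0$ is positive definite and meets the hyperplane only at the origin. The hyperplanes on which a signature-$(p,1)$ form becomes definite form a non-empty open set in the real topology, not a Zariski-closed one, so you cannot simply pass to a Zariski-open set of good $\a$'s. The $t=0$ device does not survive this case.

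\textbf{The smoothness step in your second case.} The identities $L_{\a_0}(1,\v)=C(\a_0,\v)$ and $Q_{\a_0}(1,\v)=Q(\a_0,\v)$ show that $(1,\v_0)$ is a \emph{solution}, but they do not show it is a \emph{smooth} solution of $Q_{\a_0}=L_{\a_0}=0$. Smoothness of $[\u_0,\v_0]$ on $X$ says that the full gradients $\nabla_{(\u,\v)}C$ and $\nabla_{(\u,\v)}Q$ are independent; what you need for the implicit function theorem (to perturb $\a$) is independence of the $(t,\v)$-gradients of $L_{\a_0}$ and $Q_{\a_0}$, which amounts to independence of $(\a_0\!\cdot\!\nabla_\u C,\,\nabla_\v C)$ and $(\a_0\!\cdot\!\nabla_\u Q,\,\nabla_\v Q)$. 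The first condition does not imply the second: the $\u$-components may be independent yet both annihilated by $\a_0$, while the $\v$-components coincide.

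The paper avoids both issues by a single uniform argument: pick one rational $\a$ with $L_\a$ genuinely depending on $\v$; since $\rank(Q_\a)\ge\rank(S)\ge 9$, the intersection $Q_\a=L_\a=0$ is a quadric of rank $\ge 8$ inside a hyperplane, hence has a smooth $\overline{\QQ}$-point $[1,\v]$. This exhibits a point $[\a,\v]\in X$ at which the restricted system is smooth, so the set of such points on $X$ is a \emph{non-empty} Zariski-open subset. Lemma~\ref{lem:density} (applied to the absolutely irreducible $X$ with $\Xns(\RR)\neq\emptyset$) then supplies a \emph{real} point in this open set with $\u\neq 0$, after which your perturbation-and-rational-approximation step goes through. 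Dropping the case split and inserting this rank argument before choosing the real point fixes your proof.
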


Let us now show how to proceed under the assumption of Lemma~\ref{good'}.
The equations
\[Q_{\mathbf{a}}(t,\v)=L_{\mathbf{a}}(t,\v)=0\]
describe the intersection of a quadric hypersurface with a
hyperplane.  In general such an intersection will have a rational
point whenever there is a real point, as long as
$\rank(Q_{\mathbf{a}})\ge 6$. However
\[Q_{\mathbf{a}}(t,\v)=R(\mathbf{a})t^2+S(\v)\]
with new quadratic forms $R$ and $S$,
where as before we may assume that $\rank(S)=n-H$.  Thus 
\beql{rlb}
\rank(Q_{\mathbf{a}})\ge\rank(S)=n-H=n-(h+4)\ge 9,
\eeq
which suffices to complete the proof of Theorem \ref{small_h}.

It remains to  consider the possibility that the
 assumption is not met in Lemma~\ref{good'}.
Thus  all of the linear forms $L_{ij}$ are independent of
$\mathbf{v}$ and so
$C(\mathbf{x})=C(\mathbf{u})$. 
It is  enough to find a non-trivial rational zero of the
system $C(\mathbf{u})=0$ and
\[
  Q_{\mathbf{u}}(1,\v)=R(\mathbf{u})+S(\v) = 0,
\]
where, as  above, $\rank(S)\geq 9$.  
Since $X$ is
absolutely irreducible the same is true for 
$C$.
Likewise, on making a suitable linear change of variables, 
we may assume that $C$ is a non-degenerate cubic form
in $H'\leq H$ variables.  
We will also assume, temporarily,  that the locus 
of rational solutions to $C=0$ is dense in the locus of real 
solutions. We call this the 
 ``real density hypothesis'', for convenience. 

If $S$ is indefinite or is singular, then it suffices to take 
$\mathbf{u}=\mathbf{0}$ and to solve $S(\mathbf{v})=0$ non-trivially 
over the rationals. This will certainly be possible, since $S$ has
rank at least 9.  We therefore suppose $S$ is definite, and without 
loss of generality we take $S$ to be positive definite.

We now make use 
of our assumption that there is a non-singular real zero of the system
$C=Q=0$ under consideration. The variety $X$ cannot be contained in
$Q=R=0$, since the latter will be irreducible of degree 4.  It
therefore follows from Lemma \ref{lem:density} that $X$ has a real
point $(\mathbf{u}_0,\mathbf{v}_0)$ with $R(\mathbf{u}_0)\not=0$.  Our
assumption that $S$ is positive definite then shows that we must have
$R(\mathbf{u}_0)<0$. 

In view of the real density hypothesis we 
can now find a rational zero $\mathbf{u}$ of $C$ sufficiently close to 
$\mathbf{u}_0$ that $R(\mathbf{u})<0$. Then, since
$\rank(S)\geq 9$, there will be a rational vector
$\v$ such that $S(\v)=-R(\u)$. This produces a non-trivial rational point
$[\x]=[(\u,\v)]$ on $X$, thereby completing the proof
in the second case, subject to the real density hypothesis. 

Finally we claim that the real density hypothesis holds if 
$\qorder(C)\geq 17$.  
If $h\geq 14$,  a straightforward modification of the main result 
in \cite{14} establishes the desired conclusion (cf.\ \cite[Lemma~1]{mike'}). 
Alternatively, if $h\leq 13$,  then it follows from our lower bound for 
$\qorder(C)$ that 
 \[H'\geq\qorder(C)\geq 17\geq h+4.\]
But then the claim follows from 
work of Swarbrick Jones \cite[Lemma~2]{mike'}.

\section{Proof of Lemmas \ref{smoothp}--\ref{good'}}\label{s:11}

It remains to establish Lemmas \ref{smoothp} to \ref{good'}, and we
begin with the first of these. For the proof we work over $\QQ_p$. The
quadratic form $Q$ may be written as a non-singular form in variables 
$x_1,\ldots,x_{\rho}$, and vanishes on a linear space of projective
dimension at least $\lceil(\rho-6)/2\rceil\ge 9$, in terms of 
these variables.  Hence,
as remarked in the introduction in
connection with Theorem \ref{main}, the form
$C$ will vanish at a $p$-adic point $P$, which we see may be taken to
be a non-singular point on $Q=0$. If we choose coordinates so 
that $P=[1,0,\ldots,0]$ 
our forms take the shape
\[C(\x)=x_1^2L_1(x_2,\ldots,x_n)+x_1Q_1(x_2,\ldots,x_n)+C_1(x_2,\ldots,x_n)\]
and
\[Q(\x)=x_1L_2(x_2,\ldots,x_n)+Q_2(x_2,\ldots,x_n).\]
Then $L_2$ cannot vanish identically, since $P$ is a non-singular point
on $Q=0$.  Moreover, if $L_1$ and $L_2$ are not proportional then $P$
is a smooth point on $X$.  We may therefore assume that $L_1=cL_2$.
Thus if $C'=C+LQ=C-cx_1Q$, we can write $C'(\x)$ in 
the simpler shape
\[C'(\x)=x_1Q_1(x_2,\ldots,x_n)+C_1(x_2,\ldots,x_n).\] 
Since $L_2$ does not vanish identically we can make a change of
variables to replace $L_2$ by $x_2$, say, so that $Q(\x)$ becomes
\begin{align*}
Q(\x)&=x_1x_2+Q_2(x_2,\ldots,x_n)\\
&=x_1x_2+x_2L_3(x_2,\ldots,x_n)+
Q_3(x_3,\ldots,x_n),
\end{align*}
say.  Now replacing $x_1$ by $x_1+L_3$ we further simplify $Q$ to the
shape $x_1x_2+Q_3(x_3,\ldots,x_n)$.  We then write
\[Q_1(x_2,\ldots,x_n)=x_2L_4(x_2,\ldots,x_n)+Q_4(x_3,\ldots,x_n)\]
and replace $C'$ by $C'-L_4Q$ so that (renaming our forms) 
\begin{align*}
C'(\x)&=x_1Q_1(x_3,\ldots,x_n)+C_1(x_2,\ldots,x_n)\\ 
Q(\x)&=x_1x_2+Q_2(x_3,\ldots,x_n).
\end{align*}

Consider the projection $X\rightarrow \PP^{n-2}$ from the point 
$[1,0,\ldots,0]$.  The Zariski-closure of the image of this rational map is 
the hypersurface 
\[Y:\quad
x_2C_1(x_2,\ldots,x_n)-Q_1(x_3,\ldots,x_n)Q_2(x_3,\ldots,x_n)=0\]
in $\PP^{n-2}$.  In fact $X$ and $Y$ are birational to each 
other over $\QQ$, the reverse map being given by 
$$
[x_2,\ldots,x_n]\mapsto 
\begin{cases}
[-Q_2/x_2,x_2,\ldots,x_n], & \mbox{if $x_2\neq 0$,}\\
[-C_1/Q_1,x_2,\ldots,x_n], & \mbox{if $Q_1\neq 0$,}
\end{cases}
$$
on the Zariski-open subset where $(x_2,Q_1)\neq (0,0)$.
Lemma \ref{lem:irred} ensures that $X$ is absolutely irreducible, 
and we therefore deduce that $Y$ is also absolutely irreducible. 
Lemma \ref{smoothp} will follow if we are able to show that the
$p$-adic points on $X$ are Zariski-dense. For this it will  suffice  
to show that the $p$-adic points on $Y$ are Zariski-dense. 
This will follow from   Lemma
\ref{lem:density} if we can show that $Y$ has a non-singular $p$-adic point. 

To verify the existence of a non-singular $p$-adic point on $Y$, 
we consider points with $x_2=Q_2=0$.  Such a point
will be non-singular on $Y$ provided that $\nabla Q_2\not=\mathbf{0}$
and that $Q_1$ and $C_1$ are not both zero. However 
\[\rank(Q_2)\ge \rank(Q)-2=\rho-2\ge 21 \ge 5\] 
so that the $p$-adic zeros
$[x_3,\ldots,x_n]$ 
of $Q_2$ are Zariski-dense on $Q_2=0$. In particular we can
choose a point where $\nabla Q_2\not=\mathbf{0}$, and where
$Q_1$ and $C_1$ are not both zero, unless both $Q_1$ and $C_1$ are
multiples of $Q_2$. 
However if $Q_2$ divides $C'$ we have $C'=L'Q_2$ for some linear form
$L'$, and hence
\[C=L''Q+C'=L''Q+L'Q_2=(L''+L')Q-L'x_1x_2=\overline{L}Q+L_1L_2L_3,\]
say.  Here $L_1,L_2,L_3$ and $\overline{L}$ are linear forms defined
over $\QQ_p$. If $\overline{L}$ were defined over $\QQ$ then we would
have $\qorder(C)=\qorder(L_1L_2L_3)\le 3$, contrary to our
hypotheses. Thus there is a field automorphism $\sigma$ say, such that
$\overline{L}^{\sigma}\not=\overline{L}$. Since $C^{\sigma}=C$ this yields
\[(\overline{L}^{\sigma}-\overline{L})Q=
L_1^{\sigma}L_2^{\sigma}L_3^{\sigma}-L_1L_2L_3.\]
Changing variables we may write
$\overline{L}^{\sigma}-\overline{L}=x_1$, whence $x_1Q$ has order at
most 6. We claim in general that for any form $F(x_1,\ldots,x_n)$, the
order of $F$ is at most one more than the order of
$x_1F(x_1,\ldots,x_n)$.  Given this claim we would deduce that
$\rank(Q)\le 7$, contrary to hypothesis.  Thus to complete the 
proof of Lemma \ref{smoothp} it is enough to establish the
claim. However this is easy, since if we can write 
\[x_1F(x_1,\ldots,x_n)=G(L_1,\ldots,L_m)\]
with forms 
\[L_i(x_1,\ldots,x_n)=a_ix_1+\overline{L_i}(x_2,\ldots,x_n)\]
then $G(\overline{L_1},\ldots,\overline{L_m})$ must vanish
identically, and $F$ will be a function of $x_1$ and $\overline{L_1},
\ldots,\overline{L_m}$.  This suffices for the claim.
\bigskip

The next result to prove is Lemma \ref{SD}.  Theorem A of 
Colliot-Th\'el\`ene, Sansuc and Swinnerton-Dyer \cite{CTSSD} tells us
that an absolutely irreducible non-degenerate intersection of quadrics 
in $m\ge 9$ variables satisfies the smooth Hasse principle.  
Of course, if the intersection
is degenerate there will trivially be a rational point (though not
necessarily a smooth rational point).  Thus we may assume that our
intersection is non-degenerate. We claim that $\rank(h)\ge 5$ for
every form $h$ in the pencil generated by $f$ and $g$, either
over $\overline{\QQ}$, or over some $\QQ_p$. This follows from our
hypotheses if $h$ is proportional to a rational form.  Otherwise there
is some field automorphism $\sigma$ such that $h^{\sigma}$ and $h$ are
not proportional.  However $h^{\sigma}$ is also in the pencil
generated by $f$ and $g$.  Now if $\rank(h)\le 4$ then
$\rank(h^{\sigma})\le 4$ so that the variety $h^{\sigma}=h=0$ would be
degenerate. This however is impossible given our previous assumption,
since $h^{\sigma}$ and $h$ generate the same pencil as $f$ and $g$.
Our claim is therefore established.
In particular we now see that the
intersection $f=g=0$ will be absolutely irreducible, by 
\cite[Lemma 1.11]{CTSSD}, so that the Hasse principle applies.

The variety $f=g=0$ has a smooth real 
point by hypothesis, and we claim that there are smooth $p$-adic 
points for every prime $p$. This will suffice for the proof of the lemma.

To prove this we note that for any prime $p$ there is a 
$p$-adic point by the result of Demyanov
\cite{Dem}, since $m\ge 9$.  Clearly we may assume that this point is a singular
point, since otherwise the claim is immediate.  Then, choosing
coordinates so that the point in question is at $[1,0,\ldots,0]$, 
the forms become $x_1L_1(x_2,\ldots,x_m)+f_1(x_2,\ldots,x_m)$ and 
$x_1L_2(x_2,\ldots,x_m)+g_1(x_2,\ldots,x_m)$. Here the forms $L_1$ and
$L_2$ cannot both vanish since we are assuming that $f=g=0$ is
non-degenerate.  Moreover they must be proportional since
$[1,0,\ldots,0]$ was assumed to be singular. Thus, after replacing the
forms $f$ and $g$ by a suitable linear combination, and after making a
further change of variables, we may take $L_2$, say, to vanish, and
take $L_1=x_2$. Now, since $\rank(g_1)\ge 5$ by what we proved above, 
we see that $g_1=0$ has a smooth $p$-adic zero.  Its smooth
$p$-adic zeros are therefore Zariski-dense.  Choosing such a zero with
$x_2\not=0$ we may then set $x_1=-x_2^{-1}f_1(x_2,\ldots,x_m)$,
obtaining a smooth point on $f=g=0$.  This establishes Lemma \ref{SD}.
\bigskip

We turn now to the proof of Lemma \ref{good}. If it were the case that
for every $\mathbf{a}\in\RR^h$ there is a linear combination
$C_{\mathbf{a}}(t,\v)+\lambda Q_{\mathbf{a}}(t,\v)$ with rank at most 1, then it would
be impossible for the variety $C_{\mathbf{a}}(t,\v)=Q_{\mathbf{a}}(t,\v)=0$ to have a
non-singular zero. We therefore begin by showing that this case cannot
arise.
\begin{lemma}\label{r>1}
Suppose that $n-h\ge 5$, and that 
$$
\qorder(C)\ge\max(h+1,3).
$$
Then either $X(\QQ)\neq \emptyset$,  or there is at least one non-zero 
$\mathbf{a}\in\QQ^h$
such that every linear combination $C_{\mathbf{a}}(t,\v)+\lambda Q_{\mathbf{a}}(t,\v)$
with $\lambda\in \overline{\QQ}$
has rank $2$ or more.
\end{lemma}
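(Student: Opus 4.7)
The plan is to argue by contradiction. Suppose $X(\QQ)=\emptyset$ and that for every non-zero $\mathbf{a}\in\QQ^{h}$ some $\lambda=\lambda(\mathbf{a})\in\overline{\QQ}$ makes
\[
F_{\mathbf{a},\lambda}:=C_{\mathbf{a}}(t,\v)+\lambda Q_{\mathbf{a}}(t,\v)
\]
have rank at most $1$ in $(t,\v)$; I shall derive a contradiction with $\qorder(C)\ge\max(h+1,3)$. An easy initial reduction: if $S$ is $\QQ$-isotropic, then any non-zero $\v_{0}\in\QQ^{s}$ with $S(\v_{0})=0$ gives $(\mathbf{0},\v_{0})\in X(\QQ)$ using $C(\mathbf{0},\v)\equiv 0$; so I may assume $S$ is $\QQ$-anisotropic, and since $s=n-h\ge 5$ Meyer's theorem forces $S$ to be definite (WLOG positive definite).

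The first substantive step is to translate the rank condition into algebra. A rank-$\le 1$ quadratic form in $(t,\v)$ is $c(\alpha t+\beta(\v))^{2}$; comparing coefficients with
\[
F_{\mathbf{a},\lambda}=\bigl(A(\mathbf{a})+\lambda R(\mathbf{a})\bigr)t^{2}+\mathcal{D}(\mathbf{a};\v)\,t+\bigl(B_{\mathbf{a}}(\v)+\lambda S(\v)\bigr),
\]
where $B_{\mathbf{a}}(\v)=\sum_{i}a_{i}B_{i}(\v)$ and $\mathcal{D}(\mathbf{a};\v)=\sum_{j}D_{j}(\mathbf{a})v_{j}$, yields (in the generic case $\alpha\ne 0$) the key polynomial identity
\[
4\bigl(A(\mathbf{a})+\lambda R(\mathbf{a})\bigr)\bigl(B_{\mathbf{a}}(\v)+\lambda S(\v)\bigr)=\mathcal{D}(\mathbf{a};\v)^{2}
\]
together with the proportionality $\mathcal{D}(\mathbf{a};\v)\parallel\beta(\v)$. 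The degenerate cases ($\alpha=0$, $c=0$, or the form identically zero) all force $B_{\mathbf{a}}\parallel S$ and $\mathcal{D}(\mathbf{a};\cdot)\equiv 0$, from which a short calculation gives $\qorder(C)\le h$, contradicting the hypothesis.

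Using that this identity holds for every rational $\mathbf{a}$, I would extract a rigid global structure: the forms $\beta(\mathbf{a};\cdot)$ are all rational multiples of a single linear form $\beta(\v)\in\QQ[\v]$ (rationality secured by a Galois-descent step that uses $\rank(Q)\ge 5$ to rule out the contrary), $\mathcal{D}(\mathbf{a};\v)=\phi(\mathbf{a})\beta(\v)$ for a quadratic form $\phi$ in $\mathbf{a}$, and $B_{\mathbf{a}}=-L(\mathbf{a})S+G(\mathbf{a})\beta^{2}$ for rational linear forms $L,G$ in $\mathbf{a}$. Substituting $\mathbf{a}=\u$ into the decomposition of $C$ and absorbing $LQ=LR+LS$ yields
\[
C+L(\u)Q=A(\u)+L(\u)R(\u)+\phi(\u)\beta(\v)+G(\u)\beta(\v)^{2},
\]
a cubic in the $h+1$ effective linear forms $u_{1},\ldots,u_{h},\beta(\v)$. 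When $h=1$ this collapses further to an explicit factorisation $C+\mu u_{1}Q=\gamma u_{1}(u_{1}+f(\v)/(2\gamma))^{2}$ using just two linear forms, so $\qorder(C)\le 2$, contradicting $\qorder(C)\ge 3$.

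For the remaining case $h\ge 2$, I would exhibit a rational point on $X$ directly from the reduced shape. Writing $w=\beta(\v)$, the system $C=Q=0$ becomes $F(\u,w):=A(\u)+L(\u)R(\u)+\phi(\u)w+G(\u)w^{2}=0$ together with $R(\u)+S(\v)=0$ and $\beta(\v)=w$. The binary cubic $F(\u,0)=A(\u)+L(\u)R(\u)$ in $h\ge 2$ variables admits a rational zero $\u_{0}\ne\mathbf{0}$ with $R(\u_{0})<0$, using the freedom to adjust $L$ by a rational linear form. Then, since $S$ is positive definite of rank $s\ge 5$, an application of Hasse--Minkowski to $S(\v)-ct^{2}$ of rank $s+1\ge 6$ shows that $S$ represents every positive rational; one arranges the representation to lie on the hyperplane $\beta(\v)=0$ and produces $\v_{0}$ with $S(\v_{0})=-R(\u_{0})$, giving $(\u_{0},\v_{0})\in X(\QQ)$ and the desired contradiction. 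The main obstacle is securing the rationality claims on $\beta,L,G,\phi$ via Galois descent compatible with $\rank(Q)\ge 5$, and then exhibiting the rational zero of the binary cubic $A+LR$ with the correct sign of $R(\u_{0})$; in the boundary case $s=5$ some extra care is needed since the restricted form $S|_{\beta=0}$ has rank only $4$, and one must instead vary $w$ away from $0$ to find $(\u_{0},w_{0})$ for which the level-set $S(\v)=-R(\u_{0})$ on $\beta(\v)=w_{0}$ has a rational point.
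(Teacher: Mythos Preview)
Your reduction to the shape
\[
C+L(\u)Q=A'(\u)+\phi(\u)\beta(\v)+G(\u)\beta(\v)^{2}
\]
is correct and matches what the paper obtains (with $A'=A$, $\phi=D$, $G=\ell$, $\beta=L_0$ in the paper's notation). The gap is in what you do next. This display only shows $\qorder(C)\le h+1$, which does \emph{not} contradict the hypothesis $\qorder(C)\ge h+1$. The paper's decisive step, which you omit, is to feed the reduced form back into the rank hypothesis: since now $B'_{\mathbf a}(\v)=G(\mathbf a)\beta(\v)^{2}$, the $\v$-only part of $C'_{\mathbf a}+\lambda Q_{\mathbf a}$ is $G(\mathbf a)\beta^{2}+\lambda S$, and $\rank(S)\ge 5$ forces $\lambda=0$. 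Hence $C'_{\mathbf a}(t,\v)=A'(\mathbf a)t^{2}+\phi(\mathbf a)t\beta+G(\mathbf a)\beta^{2}$ itself has rank $\le 1$ for every $\mathbf a$, giving the discriminant identity $\phi^{2}=4A'G$. From this one factors $\phi=2GG'$, $A'=G(G')^{2}$, so $C'=G(\u)\bigl(G'(\u)+\beta(\v)\bigr)^{2}$ has order at most $2$, contradicting $\qorder(C)\ge 3$. (Your $h=1$ factorisation is exactly this special case, but you assert it without deriving the discriminant relation; for $h\ge 2$ you abandon this line entirely.)

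Your alternative route for $h\ge 2$ does not work. The cubic $A'(\u)=A(\u)+L(\u)R(\u)$ in $h$ variables need not have a non-trivial rational zero at all when $h$ is small (e.g.\ $u_{1}^{3}-2u_{2}^{3}$), let alone one with $R(\u_{0})<0$. The ``freedom to adjust $L$ by a rational linear form'' is illusory: $L=\sum\lambda_{j}u_{j}$ is pinned down by the requirement $B_{j}+\lambda_{j}S=\mu_{j}\beta^{2}$, and adding any further $L'(\u)Q$ destroys the reduced shape because $B_{j}+\lambda'_{j}S$ then acquires a copy of $S$ of rank $\ge 5$. Finally, representing $-R(\u_{0})$ by $S$ on the hyperplane $\beta(\v)=0$ reduces to a form of rank $s-1$, which can be as small as $4$; your own caveat about the boundary case confirms this is not under control. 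Two smaller points: your treatment of the degenerate cases is too quick (when $\alpha=0$ one only gets $\rank(B_{\mathbf a}+\lambda S)\le 1$, not $B_{\mathbf a}\parallel S$), and ``binary cubic'' is a misnomer for a cubic in $h\ge 2$ variables.
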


\begin{proof}
For the proof we write $Q(\u,\v)=R(\u)+S(\v)$ as before, with $\rank(S)=n-h$. 
We will assume for a contradiction that for every rational $\mathbf{a}$ there
is some $\lambda$ for which $C_{\mathbf{a}}(t,\v)+\lambda Q_{\mathbf{a}}(t,\v)$ has
rank at most 1.  In particular, for any $j$ between 1 and $h$ we may
define $\mathbf{a}$ by taking $a_i=0$ for $i\not=j$ and $a_j=1$. Then, setting
$t=0$, we see that $B_j(\v)+\lambda_j S(\v)$ has rank at most 1, in
the notation \eqref{ABRS}. In the same way, for distinct positive
integers $j,k\le h$, we may
take $a_i=0$ for $i\not=j,k$ and $a_j=a_k=1$, finding that 
$B_j(\v)+B_k(\v)+\lambda_{j,k} S(\v)$ has rank at most 1. This
produces equations
\[B_j(\v)+\lambda_j S(\v)=L_j(\v)^2,\;\;\;
B_k(\v)+\lambda_k S(\v)=L_k(\v)^2\]
and
\[B_j(\v)+B_k(\v)+\lambda_{j,k} S(\v)=L_{j,k}(\v)^2.\]
Here the coefficients $\lambda$ and the linear forms $L$ are defined
over $\overline{\QQ}$. By subtraction we find that either
$\lambda_j+\lambda_k=\lambda_{j,k}$, or that $\rank(S)\le 3$. Since we
have assumed that $\rank(S)=n-h\ge 5$ we deduce that
$\lambda_j+\lambda_k=\lambda_{j,k}$, and then that
$L_j^2+L_k^2=L_{j,k}^2$. This can happen only when $L_j,L_k$ and
$L_{j,k}$ are proportional, allowing us to conclude that there is a
non-zero linear form $L_0$ defined over $\overline{\QQ}$, and constants 
$\mu_j\in\overline{\QQ}$, such that 
\[ B_j(\v)+\lambda_j S(\v)=\mu_j L_0(\v)^2 \] 
for every $j$. In
fact, if $\lambda_j\not\in\QQ$ we can apply some nontrivial Galois
automorphism $\sigma$ to show that 
$B_j(\v)+\lambda_j^\sigma S(\v)=\mu_j^\sigma (L_0(\v)^\sigma)^2$.
Then by subtraction we see that $(\lambda_j-\lambda_j^\sigma)S(\v)$ 
has
rank at most 2, again contradicting our assumptions. Thus all the
$\lambda_j$ are in $\QQ$, so that we may suppose $L_0$ and the $\mu_j$ to
be defined over $\QQ$.

Taking 
\[L(\x)=\sum_{i=1}^h \lambda_i u_i\]
we now replace $C(\x)$ by $C'=C(\x)+L(\x)Q(\x)$.  This new cubic may be
written in the shape given by \eqref{ABRS*}, with a different function 
$A(\u)$, and with $B_i(\v)$ replaced by $B_i'(\v)=B_i(\v)+\lambda_i 
S(\v)=\mu_i L_0(\v)^2$. In particular we will have $h(C')\le h$, and
since we chose our original cubic $C$ to have $h(C)=h_Q(C)$ we see
in fact that $h(C')=h_Q(C)=h$.  For ease of notation we will just
write $C$ in place of $C'$ henceforth, and assume that 
\beql{extra} 
B_i(\v)=\mu_i L_0(\v)^2. 
\eeq

Now suppose that 
\beql{CQJ}
C_{\mathbf{a}}(t,\v)+\lambda Q_{\mathbf{a}}(t,\v)=(\al t+J(\v))^2
\eeq
for some $\al$ and $J(\v)$ defined over $\overline{\QQ}$. Then, on comparing the
terms not involving $t$, and using \eqref{extra}, we see that 
\begin{align*}
J(\v)^2&=\sum_{j=1}^h a_j B_j(\v)+\lambda S(\v)\\
&=\left(\sum_{j=1}^h \mu_j a_j\right)
L_0(\v)^2+\lambda S(\v).
\end{align*}
Using the fact that $\rank(S)\ge 5$ once again we conclude that
$\lambda=0$ and that
$J(\v)$ is proportional to $L_0(\v)$, and hence 
equal to $\beta L_0(\x)$ say.

We now expand \eqref{CQJ} further, using \eqref{ABRS}. We then
see from the linear term in $t$ that
\beql{DL0}
\sum_{j=1}^s D_j(\mathbf{a})v_j=2\al\beta L_0(\v).
\eeq
Thus for every rational vector $\mathbf{a}$ the linear form $\sum_j
D_j(\mathbf{a})v_j$ is proportional to $L_0(\v)$. This can happen only when
the quadratic forms $D_j$ are all proportional to each other, of the
shape $\nu_j D(\mathbf{a})$ say, with constants $\nu_j\in\QQ$.  This allows
us to write
\[C(\u,\v)=A(\u)+D(\u)L'(\v)+\ell(\u)L_0(\v)^2\]
for suitable linear forms $L'$ and $\ell$ defined over $\QQ$, and indeed
\eqref{DL0} shows that we may take $L'(\v)=L_0(\v)$.

It follows that
$C_{\mathbf{a}}(t,\v)=A(\mathbf{a})t^2+D(\mathbf{a})tL_0(\v)+
\ell(\mathbf{a})L_0(\v)^2$, which must
have rank at most one for every choice of $\mathbf{a}\in\QQ^h$.  
If $L_0$ vanishes identically, or if $\ell(\u)$ and
$D(\u)$ both vanish identically, then $C(\x)=A(\u)$, which has order 
at most $h$, contrary to the hypothesis of Lemma \ref{r>1}.
Thus
$D(\mathbf{a})^2=4A(\mathbf{a})\ell(\mathbf{a})$ for 
any $\mathbf{a}\in\QQ^h$, and then
$D(\u)=2\ell(\u)\ell'(\u)$ and $A(\u)=\ell(\u)\ell'(\u)^2$ for
some linear form $\ell'(\u)$ defined over $\QQ$. 
However in this case 
\[C(\x)=A(\u)+D(\u)L_0(\v)+\ell(\u)L_0(\v)^2=\ell(\u)\{\ell'(\u)+L_0(\v)\}^2,\]
which has order at most 2, again contradicting our hypotheses.  This
therefore establishes the lemma.
\end{proof}

The next stage in the proof of Lemma \ref{good} is the
following result.
\begin{lemma}\label{goodQb}
Under the hypotheses of Lemma \ref{r>1}, either 
$X(\QQ)\neq \emptyset$, or there is at least one non-zero
$\mathbf{a}\in\QQ^h$ such that the variety
$$C_{\mathbf{a}}=Q_{\mathbf{a}}=0$$ 
has a point
$(t,\v)\in\overline{\QQ}^{\, 1+s}$ with $t\not=0$, at which
$\nabla C_{\mathbf{a}}$ and $\nabla Q_{\mathbf{a}}$ are not proportional.
\end{lemma}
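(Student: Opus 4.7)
My plan is to take the vector $\mathbf{a}_0 \in \QQ^h$ supplied by Lemma~\ref{r>1} and show that it itself meets the conclusion of Lemma~\ref{goodQb}, defaulting to the first alternative $X(\QQ)\neq\emptyset$ whenever the analysis hits a $\QQ$-rational obstruction that can be lifted. Write $V := V_{\mathbf{a}_0} \subset \PP^s$ in coordinates $[t:v_1:\cdots:v_s]$, where $s = n-h \geq 5$. Since by Lemma~\ref{r>1} every pencil member $\alpha C_{\mathbf{a}_0} + \beta Q_{\mathbf{a}_0}$ has rank at least $2$ over $\overline{\QQ}$, the forms $C_{\mathbf{a}_0}$ and $Q_{\mathbf{a}_0}$ are linearly independent and $V$ is a complete intersection of dimension at least $s-2 \geq 3$.

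The first step is to verify $V \not\subseteq \{t=0\}$. Since $\rank(S) = s \geq 5$, the affine quadric $Q_{\mathbf{a}_0}(1,\v) = R(\mathbf{a}_0)+S(\v) = 0$ has a Zariski-dense set of $\overline{\QQ}$-zeros. If $C_{\mathbf{a}_0}(1,\v)$ vanished identically on this quadric, then $t \mid C_{\mathbf{a}_0} - \lambda Q_{\mathbf{a}_0}$ for some $\lambda \in \overline{\QQ}$. The resulting pencil member would have rank at most $2$ with a kernel containing the $t$-direction, and exploiting the explicit shape \eqref{ABRS} together with $\rank(S) \geq 5$ forces a contradiction with Lemma~\ref{r>1}.

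Suppose now for contradiction that no $\overline{\QQ}$-point of $V$ with $t \neq 0$ is smooth on $V$. Let $Z$ be an irreducible component of $V$ meeting $\{t \neq 0\}$; then $\dim Z \geq s-2$. The proportionality of $\nabla C_{\mathbf{a}_0}$ and $\nabla Q_{\mathbf{a}_0}$ at the generic point of $Z$ determines a unique $[\alpha_0:\beta_0] \in \PP^1(\overline{\QQ})$ with $Z \subseteq \mathrm{Sing}(M)$, where $M := \alpha_0 C_{\mathbf{a}_0} + \beta_0 Q_{\mathbf{a}_0}$. Since $\mathrm{Sing}(M)$ is a linear subspace of $\PP^s$ of dimension $s+1-\rank(M)$, the inequality $\dim Z \geq s-2$ forces $\rank(M) \leq 3$.

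The main obstacle is to exclude this low-rank situation. When $\rank(M) = 2$, factor $M = L_1L_2$ over $\overline{\QQ}$; the pencil identity $C_{\mathbf{a}_0} = \alpha_0^{-1}(L_1L_2 - \beta_0 Q_{\mathbf{a}_0})$ shows that $V \cap \{L_1=0\}$ coincides with the quadric cut out by $Q_{\mathbf{a}_0}|_{L_1=0}$, of rank at least $\rank(Q_{\mathbf{a}_0})-2 \geq \rank(S)-2 \geq 3$. Since $s-1 \geq 4$, this restricted quadric has a Zariski-dense open set of smooth $\overline{\QQ}$-points; at any such $p$ with $L_2(p) \neq 0$, the identity $\alpha_0\nabla C_{\mathbf{a}_0}(p) + \beta_0 \nabla Q_{\mathbf{a}_0}(p) = L_2(p)\nabla L_1$ combined with the fact that $\nabla Q_{\mathbf{a}_0}(p)$ is non-zero and not proportional to $\nabla L_1$ (coming from smoothness on $L_1=0$) forces $\nabla C_{\mathbf{a}_0}(p)$ and $\nabla Q_{\mathbf{a}_0}(p)$ to be linearly independent, contradicting the standing assumption. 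The rank-$3$ case is dispatched analogously by putting $M$ in a suitable normal form and restricting to an appropriate hyperplane. Finally, in the event that the factorization of $M$ is defined over $\QQ$, the smooth point thus produced is itself $\QQ$-rational, and via $[t\mathbf{a}_0 : \v]$ it yields an element of $X(\QQ)$, activating the first alternative of the dichotomy and completing the proof.
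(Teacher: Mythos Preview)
Your overall strategy---arguing by contradiction, finding a component $Z$ of $V$ inside the singular locus of a low-rank pencil member $M$, and then eliminating the rank-$2$ and rank-$3$ cases---is a workable alternative to the paper's more direct argument, but the execution has several real gaps. First, your verification that $V\not\subseteq\{t=0\}$ has the logic inverted: you assume $C_{\mathbf{a}_0}(1,\v)$ \emph{vanishes} on the affine quadric, whereas $V\subseteq\{t=0\}$ says precisely that $C_{\mathbf{a}_0}(1,\v)$ and $Q_{\mathbf{a}_0}(1,\v)$ have \emph{no} common zero. The correct route (using that $V$ is pure of dimension $s-2$ and that a quadric of rank $\ge 5$ contains no $(s-2)$-plane) forces $C_{\mathbf{a}_0}-\lambda Q_{\mathbf{a}_0}=ct^2$, not merely $tL$; this has rank $\le 1$ and does contradict Lemma~\ref{r>1}, but your claim that the kernel of a form $tL$ ``contains the $t$-direction'' is false in general. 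Second, in the rank-$2$ case you never arrange $t(p)\neq 0$ at the smooth point you construct, so there is no contradiction with your standing assumption (this is fixable: the restricted quadric is irreducible of rank $\ge 3$, hence not contained in $\{t=0\}$). Third, the rank-$3$ case is not ``dispatched analogously''; it simply cannot occur, since then $Z=\mathrm{Sing}(M)$ would be an $(s-2)$-plane lying on the quadric $Q_{\mathbf{a}_0}=0$ of rank $\ge 5$, which is impossible. Finally, your last sentence about $\QQ$-rationality is both unnecessary (once Lemma~\ref{r>1} supplies $\mathbf{a}_0$ you only need the second alternative) and incorrect (a smooth $\overline{\QQ}$-point on a $\QQ$-variety need not be $\QQ$-rational).

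For comparison, the paper avoids these difficulties by proving a clean general statement: whenever quadratic forms $A,B$ satisfy $\rank(A)\ge 5$ and every member of the pencil they generate has rank $\ge 2$, the variety $A=B=0$ has a smooth $\overline{\QQ}$-point off any prescribed hyperplane $L=0$. One replaces $B$ by the minimum-rank form in the pencil; if this rank is $\ge 3$ the intersection is absolutely irreducible of degree $4$ and dimension $\ge 2$, so a generic point works, while if the rank is exactly $2$ one writes $B=x_1x_2$, sets $x_1=0$, and tracks the hyperplane condition (corresponding here to $t\neq 0$) explicitly throughout.
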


\begin{proof}
By Lemma \ref{r>1} we may choose $\mathbf{a}$ so that every form in the pencil
generated by $C_{\mathbf{a}}$ and $Q_{\mathbf{a}}$ has rank at least 2. As before we
may assume that $\rank(S)=n-h\ge 5$, whence $\rank(Q_{\mathbf{a}})\ge 5$.  We
will show in general that if $A(\y)$ and $B(\y)$ are quadratic forms
such that $\rank(A)\ge 5$, and such that every form in the pencil
generated by $A$ and $B$ over $\overline{\QQ}$ has rank at least 2,
then $A=B=0$ has a point with $\nabla A$ not proportional to $\nabla
B$, and lying off any given hyperplane $L(\y)=0$. (In this general
formulation the condition
$t\not=0$ corresponds to a requirement of the type
$L(y_1,\ldots,y_n)\not=0$.)
Without loss of generality we can take $B$ with as small rank, $r$
say, as possible.  If $r\ge 3$ then the variety $A=B=0$ is irreducible
of degree 4 and codimension 2, and 
is not contained in
the hyperplane $L=0$. Since the variety $A=B=0$ has projective
dimension $n-3\ge n-h-3\ge 2$ there will be a non-empty Zariski-open 
set of points satisfying the conditions of the lemma.

We therefore assume that $B$ has rank exactly 2, and write $B=x_1x_2$.
Since $L$ cannot be proportional to both $x_1$ and $x_2$ we may assume
that $x_1$, say, is not proportional to $L$. We set $x_1=0$ and
$L'(x_2,\ldots,x_n)=L(0,x_2,\ldots,x_n)$, and look
for points on $A=x_1=0$ with $x_2L'\not=0$ and such that $\nabla A$ is
not proportional to $(1,0,\ldots,0)$. However
$A'=A(0,x_2,x_3,\ldots,x_n)$ has rank at least $\rank(A)-2\ge 3$ and
hence is an absolutely irreducible quadratic form.  Moreover at least
one partial derivative $P_i=\partial A'/\partial x_i$ for $i=2,\ldots,n$
is not identically zero. Thus $A'$ cannot divide $x_2L'P_i$, whence
$A'=0$ has a point at which $x_2L'P_i\not=0$. This produces a point
$(0,x_2,\ldots,x_n)$ on $A=B=0$ for which $L\not=0$ and such that
$\nabla A$ is not proportional to $\nabla B$. This completes the
proof of the lemma.
\end{proof}

We are now ready to complete the proof of Lemma \ref{good}.  The
variety $X\subset\PP^{n-1}$ is defined by $C(\u,\v)=Q(\u,\v)=0$ and is
absolutely irreducible, by Lemma \ref{lem:irred}.  
The points $[\u,\v]$ on $X$ for which
$[t,\v]=[1,\v]$ is a singular point of $C_{\u}(t,\v)=Q_{\u}(t,\v)=0$
form a Zariski-closed subset of $X$, and by Lemma \ref{goodQb} it is a
proper subset of $X$. We have assumed that $X$ has a smooth real
point, and by Lemma \ref{lem:density} the real points must be Zariski-dense on
$X$. Hence there is a Zariski-dense set of smooth real points $[\u,\v]$ of $X$, 
with $\u\not=\mathbf{0}$
and such that $[1,\v]$ is a smooth point of
$C_{\u}(t,\v)=Q_{\u}(t,\v)=0$.  It follows in particular that there is
a non-zero real $\u$ such that
$C_{\u}(t,\v)=Q_{\u}(t,\v)=0$ has a smooth real point $[1,\v]$. Suppose now
that $\mathbf{a}_m$ is a sequence of rational points tending to $\u$
in the real metric.  Write $A(t,\v)$ and $B(t,\v)$ for the quadratic forms 
$C_{\u}(t,\v)$ and $Q_{\u}(t,\v)$, and write $A_m,B_m$ for the
corresponding forms when $\u$ is replaced by $\mathbf{a}_m$.  Then
$A_m$ and $B_m$ tend to $A$ and $B$ respectively.  However $A$ and $B$
have a smooth real zero at $[1,\v]$, whence it follows that $A_m$ and
$B_m$ will also have a smooth real zero $[1,\v_m]$, say, if $m$ is
large enough.  This suffices for the proof of Lemma \ref{good}. In
particular the rational points $[\mathbf{a}]\in\PP^{h-1}$ obtained in this way
cannot be restricted to a proper subvariety of $\PP^{h-1}$, since the 
points $[\u]$ were Zariski-dense. 
\bigskip

Moving on to Lemma \ref{splin}, we begin by observing that if
$\alpha C_{\mathbf{a}}+\beta Q_{\mathbf{a}}$ has rank at most 4 then, on
setting $t=0$, we must have 
$$
\rank\left(\alpha\sum_{i=1}^ha_iB_i(\v)+\beta S(\v)\right)\le 4.
$$
Since $\rank(S)=n-h\ge 13$ we will have $\alpha\not=0$, and we may
therefore assume that $\alpha=1$.  We now consider the variety
\[\mathcal{I}=\left\{[u_1,\ldots,u_h,\beta]\in\PP^h:
\rank\left(\sum_{i=1}^hu_iB_i(\v)+\beta S(\v)\right)\le 4\right\}.\] 
The projection $[u_1,\ldots,u_h,\beta]\mapsto [u_1,\ldots,u_h]$ is
well-defined on $\mathcal{I}$ since
$[0,\ldots,0,1]\not\in\mathcal{I}$.  Its image is Zariski-dense in
$\PP^{h-1}$ and must therefore be the whole of $\PP^{h-1}$, so that
for every $[\mathbf{a}]\in \PP^{h-1}$, 
there is  a corresponding $\beta$ such that
\beql{le4'}
\rank\left(\sum_{i=1}^ha_iB_i(\v)+\beta S(\v)\right)\le 4.
\eeq
It is possible indeed that
this might still be true with the bound 4 replaced by some smaller
number.  We therefore define $\tau\le 4$ as the smallest integer such
that \eqref{le4'} is solvable for $\beta$, for all $\mathbf{a}$.

We now claim that, after replacing $C$ by $C+LQ$ for a suitable
linear form $L=L(\u)$ defined over $\QQ$, and after making a suitable linear
change of variables among the $u_i$, we will have $\rank(B_i)=\tau$
for $1\le i\le h$.  Moreover it will remain true that for every
$\mathbf{a}$ there is a corresponding $\beta=\beta(\mathbf{a})$ with
\[\rank\left(\sum_{i=1}^ha_iB_i(\v)+\beta S(\v)\right)\le\tau.\]

To establish the claim we first note that there is a 
Zariski-dense set of values of $[\mathbf{a}]$ 
such that the rank given above is actually
equal to $\tau$. Thus we may choose a linearly independent set of
vectors $\mathbf{a}_1,\ldots,\mathbf{a}_h\in\QQ^h$ with this
property.  Then, after a suitable change of variable among the $u_i$
we can suppose that
\[\rank\left(B_i(\v)+\beta_i S(\v)\right)=\tau,\;\;\;(1\le i\le h).\]
If $\beta_i$ were irrational for some $i$ there would be a 
Galois automorphism $\sigma$ such that 
$\beta_i^{\sigma}\not=\beta_i$.  We would then have
\[\rank\left(B_i(\v)+\beta_i^{\sigma} S(\v)\right)=\tau,\]
whence $\rank\big((\beta_i^{\sigma}-\beta_i)S(\v)\big)\le 2\tau$,
by subtraction.  This however is impossible since
$\beta_i^{\sigma}-\beta_i\not=0$ and $\rank(S)=n-h\ge 13$. Thus all
the $\beta_i$ must be rational.  We then define
\[L(\u)=\sum_{i=1}^h \beta_i u_i\]
and consider $C'=C+LQ$. The corresponding quadratic forms $B_i'(\v)$
are now $B_i(\v)+\beta_i S(\v)$, 
and therefore have rank $\tau$. The
claim then follows.

To complete the argument we take any index $i=2,\ldots,h$, and any
$\mu\in\QQ$.  There is
then a $\gamma_i\in\overline{\QQ}$ such that
\[\rank\left(B_1(\v)+\mu B_i(\v)+\gamma_i S(\v)\right)\le\tau.\]
However $B_1$ and $B_i$ both have rank 
$\tau$ so that
$$\rank(\gamma_i S(\v))\le 3\tau\le 12,
$$ 
by subtraction.  Since
$\rank(S)=n-h\ge 13$ this would give a contradiction unless
$\gamma_i=0$, as we now assume. It therefore follows that $\rank(B_1+\mu B_i)\le
\tau$ for every $i$, and for every choice of $\mu$.

We proceed to make a change of variables among the $v_j$ so as to make
$B_1(\v)=B_1^*(v_1,\ldots,v_{\tau})$.  We now claim that
$B_i(0,\ldots,0,v_{\tau+1},\ldots,v_s)$ must vanish identically, for
every $i$. If this were not the case we could introduce a change of
variable among $v_{\tau+1},\ldots,v_s$ so as to make $v_{\tau+1}^2$
appear with coefficient 1, in $B_i$.  The
$(\tau+1)\times(\tau+1)$ minor of $B_1+\mu B_i$ corresponding to the
first $\tau+1$ rows and first $\tau+1$ columns would then be a
polynomial $P(\mu)$ say, with linear term $\mu\det(B_1^*)$.  
Since $\rank(B_1^*)=\tau$ we have $\det(B_1^*)\not=0$ so that 
$P(\mu)$ does not vanish identically.  Thus 
there
can be at most finitely many values of $\mu$ for which
$P(\mu)=0$. Taking any other value of $\mu$ produces a combination
$B_1+\mu B_i$ of rank strictly greater than $\tau$, which is a
contradiction. This establishes our claim. 

We therefore see that $B_i(0,\ldots,0,v_{\tau+1},\ldots,v_s)$
vanishes identically, for every $i$, so that each of the quadratic
forms $B_1,\ldots,B_h$ may be written in the shape
$B_i(\v)=v_1\ell_{i1}(\v)+\cdots+v_{\tau}\ell_{i\tau}(\v)$. Thus, if
we relabel $v_1,\ldots,v_{\tau}$ as $u_{h+1},\ldots u_{h+\tau}$ we
will be able to put $C(\x)$ into the form
\[C(\x)=C(\u,\v)=\sum_{1\le i\le j\le H}u_i u_jL_{ij}(\u,\v),\]
with $H=h+\tau$.  Lemma \ref{splin} then follows.
\bigskip

The proof of Lemma \ref{good'} is rather easy.
Since at least one linear form $L_{ij}(\mathbf{u}, \v)$ depends
explicitly on $\v$, we can choose $\mathbf{a} \in \QQ^H$ such that 
$L_\mathbf{a}(t, \v)$ also explicitly depends on $\v$.
In particular, the equation $L_\mathbf{a}(t, \v)=0$ has solutions
with $t \ne 0$, and they are Zariski-dense amongst the set of all
solutions. Hence, taking such a suitable $\mathbf{a}\in\QQ^H$,
we see that
$\rank(Q_{\mathbf{a}})\ge \rank(S)\ge 9$, as in 
\eqref{rlb}. It follows that the variety
$Q_{\mathbf{a}}(t,\v)=L_{\mathbf{a}}(t,\v)=0$ will have a point
of the form $[1, \v]$
over  $\overline{\QQ}$ which is non-singular in the sense that 
$\nabla Q_{\mathbf{a}}$ is not proportional to $\nabla L_{\mathbf{a}}$.

We now argue as in the final stages of the proof of Lemma \ref{good}.
We have shown that there is a
point $[\u,\v]$ on $X$ such that $[1,\v]$ is a smooth point on
$Q_{\mathbf{a}}(t,\v)=L_{\mathbf{a}}(t,\v)=0$.  There is therefore a
non-empty Zariski-open subset of such points $[\u,\v]$.
However the variety
$C=Q=0$ is absolutely irreducible, and has a smooth real point.  The
real points are therefore Zariski-dense, by Lemma \ref{lem:density}. We 
choose any such point 
with $\u \not=\mathbf{0}$, and such that
$[1,\v]$ is a smooth point on
$Q_{\mathbf{a}}(t,\v)=L_{\mathbf{a}}(t,\v)=0$. Then, taking rational
points $\mathbf{a}_m$ converging to $\u$ in the real topology, we may
complete the argument as before.

\end{document}